\newtheorem{theorem}{Theorem}[section]
\newtheorem{lemma}[theorem]{Lemma}
\theoremstyle{definition}
\newtheorem{definition}[theorem]{Definition}
\newtheorem{remark}[theorem]{Remark}
\newtheorem{proposition}[theorem]{Proposition}
\newtheorem{corollary}[theorem]{Corollary}
\theoremstyle{remark}
\numberwithin{equation}{section}
\newcommand{\Rmnum}[1]{\expandafter\@slowromancap\romannumeral #1@}
\begin{document}

\title{On Random Convex Analysis}

\author{Tiexin Guo}
\address{School of Mathematics and Statistics, Central South University,
 Changsha 410083, China}
\email{tiexinguo@csu.edu.cn}
\thanks{This first four authors of this paper are supported by the National Natural Science Foundations No. 11171015 and No. 11571369.}

\author{Erxin Zhang}
\address{School of Mathematics and Statistics, Central South University,
 Changsha 410083, China}
\email{zhangerxin6666@163.com}

\author{Mingzhi Wu}
\address{School of Mathematics and Statistics, Central South University,
 Changsha 410083, China}
\email{wumingzhi@csu.edu.cn}

\author{Bixuan Yang}
\address{School of Mathematics and Statistics, Central South University,
 Changsha 410083, China}
\email{bixuanyang@126.com}

\author{George Yuan}
\address{Institute of Risk Management, Department of Mathematics,
 Tongji University, Shanghai, 200092, China; Centre for Financial Engineering,
  Soochow University, Suzhou, 215006, China }
\email{george\_yuan99@tonji.edu.cn; george\_yuan99@suda.edu.cn}
\thanks{George Yuan: Corresponding author}

\author{Xiaolin Zeng}
\address{School of Mathematics and Statistics, Chongqing Technology and Business University,Chongqing 400067, China}
\email{xlinzeng@163.com}
\thanks{The sixth author of this paper is supported by the National Natural Science Foundation No. 11301568.}

\subjclass[2000]{ 46A16, 46A19, 46A20, 46H25, 46H30, 54A41, 60H25.}



\keywords{Random locally convex modules, $L^0$--convex functions, the $(\varepsilon, \lambda)$--topology, the locally $L^0$--convex topology, continuity, subdifferentiability, G\^ateaux--differentiability, Fr\'ech\'et--differentiability.}

\begin{abstract}
Recently, based on the idea of randomizing space theory, random convex analysis has been being developed in order to deal with the corresponding problems in random environments such as analysis of conditional convex risk measures and the related variational problems and optimization problems. Random convex analysis is convex analysis over random locally convex modules. Since random locally convex modules have the more complicated topological and algebraic structures than ordinary locally convex spaces, establishing random convex analysis will encounter harder mathematical challenges than classical convex analysis so that there are still a lot of fundamentally important unsolved problems in random convex analysis. This paper is devoted to solving some important theoretic problems. First, we establish the inferior limit behavior of a proper lower semicontinuous $L^0$--convex function on a random locally convex module endowed with the locally $L^0$--convex topology, which makes perfect the Fenchel--Moreau duality theorem for such functions. Then, we investigate the relations among continuity, locally $L^0$--Lipschitzian continuity and almost surely sequent continuity of a proper $L^0$--convex function. And then, we establish the elegant relationships among subdifferentiability, G\^ateaux--differentiability and Fr\'ech\'et--differentiability for a proper $L^0$--convex function defined on random normed modules. At last, based on the Ekeland's variational principle for a proper lower semicontinuous $\bar{L}^0$--valued function, we show that $\varepsilon$--subdifferentials can be approximated by subdifferentials. We would like to emphasize that the success of this paper lies in simultaneously considering the $(\varepsilon, \lambda)$--topology and the locally $L^0$--convex topology for a random locally convex module.
\end{abstract}

\maketitle


.
\section{Introduction}\label{sec:1}
Classical convex analysis has played various kinds of important roles in Mathematical finance, variational problems, optimization problems, Banach space theory and nonlinear functional analysis, see, for instance,\cite {AB06,ET99,Phe89,Roc70,Yuan98,Yuan99}. In particular, since Artzner,et.al \cite{ADEH99} introduced coherent risk measures in 1999 and subsequently F\"{o}llmer and Schied \cite{FS02} and Fritlelli and Rosazza Gianin \cite{FRG02} independently introduced more general convex risk measures in 2002, classical convex analysis has become the analytic foundation for convex risk measures, please refer to \cite{FS11} for applications of convex analysis to risk measures and other topics in stochastic finance.

In conditional or dynamic setting, the notion of a conditional (or dynamic) convex risk measure is required to measure risk more precisely by making full use of information from markets or environment, which was independently introduced by Detlefsen and Scandolo \cite{DS05} and Bion--Nadal \cite{BN04} in 2004. At the outset, classical convex analysis still can develop its power for the study of conditional convex risk measures on bounded financial positions, see, for instance, \cite{DS05,BN04,FP06}. However, in general, classical convex analysis no longer applies to the study of conditional convex risk measures, for example, classical convex analysis can not deal with the dual representation of conditional convex risk measures defined on unbounded financial positions. Such a phenomenon was first pointed out by Filipovi\'c, Kupper and Vogelpoth in \cite{FKV09}, where locally $L^0$--convex modules were introduced and a hyperplane separation theorem between two $L^0$--convex sets with one of them open was established. Besides these important contributions, Filipovi\'c, Kupper and Vogelpoth in \cite{FKV09} also made an attempt to establish convex analysis over locally $L^0$--convex modules (called random convex analysis). However, locally $L^0$--convex modules have the complicated topological and algebraic structures, as pointed out by Guo, et.al in \cite{GZZ12} and attested, independently, by Zapata in \cite{Zap17} and Wu and Guo in \cite{WG15}, Filipovi\'c, et.al's paper \cite{FKV09} did not well deal with the complicated topological and algebraic structures so that random convex analysis  established in \cite{FKV09} was far from meeting the needs of conditional convex risk measures.

In fact, in \cite{GZZ12} we started a new approach to random convex analysis, namely choosing random locally convex modules as the space framework for random convex analysis. Although both random locally convex modules and locally $L^0$--convex modules are a random generalization of classical locally convex spaces, the structure of random locally convex modules is determined by a family of $L^0$--seminorms and the family of $L^0$--seminorms can simultaneously induce two kinds of topologies--the $(\varepsilon, \lambda)$--topology and the locally $L^0$--convex topology, whereas locally $L^0$--convex modules only involves the locally $L^0$--convex topology. Further, the two kinds of topologies have their respective advantages and disadvantages and in particular there are natural connections between basic theories derived from the two kinds of topologies for random locally convex modules, see \cite{Guo10,Guo13,GY12,GZZ15a,ZG12} for details, where readers can see that the advantages and disadvantages of the two kinds of topologies may complement each other. Thus random convex analysis can be thoroughly treated only when random convex analysis is put into random locally convex modules. Recently, we have developed some basic results of random convex analysis along the above--stated idea, for example, in \cite{GZZ15a} we gave the refined hyperplane separation theorem between a point and $L^0$--convex closed set and Fenchel--Moreau duality theorem for a proper lower semicontinuous $L^0$--convex function, in \cite{GZZ15b} we gave continuity and subdifferentiability theorems for a proper lower semicontinuous $L^0$--convex function on an $L^0$--pre--barrelled random locally convex module and in particular gave a characterization for a random locally convex module to be $L^0$--pre--barrelled, and in \cite{GZZ14} we gave some applications of random convex analysis to conditional risk measures. In fact, the work in \cite{GZZ15a,GZZ15b} has showed that establishing random convex analysis requires almost all achievements from random functional analysis (often also called random metric theory), which is concerned with analytics of random metric spaces, random normed modules and random inner product modules. Such a new approach to random functional analysis was initiated by Guo in \cite{Guo92,Guo93} who was motivated from the theory of probabilistic metric spaces \cite{SS8305} where K.Menger, B.Schweizer and A.Sklar advocated the idea of randomizing space theory.

There are many inherent or essential challenges in the course of the development of random convex analysis. In fact, these challenges also company random functional analysis all the time. Let $(\Omega, \mathcal{F}, P)$ be a probability space, $K$ the scalar field of real or complex numbers, $L^0(\mathcal{F}, K)$ the algebra of equivalence classes of $K$--valued random variables on $\Omega$ and $\bar{L}^0(\mathcal{F})$ the set of equivalence classes of extended real--valued random variables on $\Omega$. It is well known from \cite{DS57} that $\bar{L}^0(\mathcal{F})$ is a complete lattice under the partial order $\leq\ :\ \xi \leq \eta$ iff $\xi(\omega) \leq \eta(\omega)$ for $P$--almost surely all $\omega$ in $\Omega$ and $L^0(\mathcal{F}, R)$ is an order--complete lattice, where $R$ stands for the set of real numbers. The order $\leq$ on $L^0(\mathcal{F}, R)$ is a partial order,which brings huge difficulties to the study of random convex analysis since $L^0$--norms, $L^0$--seminorms and $L^0$--convex functions take their values in $L^0(\mathcal{F}, R)$ or $\bar{L}^0(\mathcal{F})$ unlike the usual norms, seminorms and convex functions with their values in $R$ or $[-\infty, +\infty]$. On the other hand, the useful topology for $R$ is often unique, namely the Euclidean topology, whereas $L^0(\mathcal{F}, R)$ possesses many useful topologies, for example, the topology of convergence in probability measure and the locally $L^0$--convex topology (somewhat similar to the topology of uniform convergence), which makes random normed modules and random locally convex modules possess very complicated topological structure. In addition, random normed modules and random locally convex modules are $L^0(\mathcal{F}, K)$--modules, unlike normed spaces and locally convex spaces as linear spaces over $K$, since $L^0(\mathcal{F}, K)$--modules have extremely complicated algebraic structure the study of random normed modules and random locally convex modules often requires the analysis of complicated stratification structure. It is because of the above--stated complications that there remain many basic and important problems unsolved in random convex analysis. This paper continues the study of random convex analysis and solves some basic theoretical problems.

In this paper, we first give a thorough treatment of lower semicontinuity for a proper $\bar{L}^0$--valued function defined on a random locally convex module. Given a random locally convex module $(E, \mathcal{P})$, we always denote by $\mathcal{T}_{\varepsilon,\lambda}$ and $\mathcal{T}_c$ the $(\varepsilon, \lambda)$--topology and the locally $L^0$--convex topology induced by the family $\mathcal{P}$ of $L^0$--seminorms, respectively. Now, let $(E, \mathcal{P})$ be a random locally convex module over the real number field $R$ with base $(\Omega, \mathcal{F}, P)$ such that $E$ and $\mathcal{P}$ both have the countable concatenation property, $f\ :\ E \to \bar{L}^0(\mathcal{F})$ a proper and local function and $epi(f)\ :\ = \{ (x, r) \in E \times L^0(\mathcal{F}, R)\ |\ f(x) \leq r \}$. Then the following statements are equivalent:\\
$(1)$ $\{ x \in E\ |\ f(x) \leq r \}$ is $\mathcal{T}_c$--closed for any $r \in L^0(\mathcal{F}, R)$;\\
$(2)$ $epi(f)$ is closed in $(E, \mathcal{T}_{\varepsilon,\lambda}) \times (L^0(\mathcal{F}, R), \mathcal{T}_{\varepsilon,\lambda})$;\\
$(3)$ $epi(f)$ is closed in $(E, \mathcal{T}_c) \times (L^0(\mathcal{F}, R), \mathcal{T}_c)$;\\
$(4)$ $\underline{lim}_{\alpha}f(x_{\alpha}) \geq f(x)$ for any $x \in E$ and any net $\{ x_{\alpha}, \alpha \in \Gamma \}$ convergent to $x$ with respect to $\mathcal{T}_c$, where $\underline{lim}_{\alpha}f(x_{\alpha}) = \bigvee_{\beta \in \Gamma}(\bigwedge_{\alpha \geq \beta}f(x_{\alpha}))$.

The proposition is crucial in random convex analysis. Although the previous papers from Filipovi\'c, Kupper and Vogelpoth's \cite{FKV09} to Guo, Zhao and Zeng's \cite{GZZ12} to the earlier version of our recent paper \cite{GZZ15a} are devoted to the study of equivalence among (1), (3) and (4), however, in \cite{GZZ15a} we realized that these papers had not yet given a strict proof that (1) implies (4), which is the most difficult part of the proof of the proposition. This paper, for the first time, completes the proof.

Then, this paper is devoted to the continuity problem. In \cite{GZZ15b}, Guo, Zhao and Zeng established continuity theorem for a proper $\mathcal{T}_c$--lower semicontinuous $L^0$--convex function. In this paper, we show that a proper $L^0$--convex function is locally $L^0$--Lipschitzian at this point if it is $\mathcal{T}_c$--continuous at some point, which further implies that it is also sequently continuous in the sense of almost sure convergence. In particular for an $L^0$--valued $L^0$--convex function defined on a complete random normed module with the countable concatenation property, we show that $\mathcal{T}_{\varepsilon,\lambda}$--continuity, $\mathcal{T}_c$--continuity  and almost surely sequent continuity coincide.

In \cite{GZZ15b}, Guo, Zhao and Zeng established subdifferentiability theorem for a proper lower semicontinuous $L^0$--convex function. In this paper, we further establish the operation laws on subdifferentials, in particular we also start the general study of G\^ateaux--and Fr\'ech\'et--differentiabilities for a proper $L^0$--convex function. It is well known that the theory of G\^ateaux--and Fr\'ech\'et--differentiability for ordinary convex functions is the deepest and the most difficult part in classical convex analysis, see \cite{Phe89}. However, for $L^0$--convex functions such a general theory has not been available except a few of study on Fr\'ech\'et--differentiability in the extremely special case \cite{CKV12}. Owing to the Riemann Calculus of Guo and Zhang on the abstract functions from an real interval to a $\mathcal{T}_{\varepsilon,\lambda}$--complete random normed module \cite{GZ12} we can present proper definitions of G\^ateaux--and Fr\'ech\'et--differentiabilities for a proper $L^0$--convex function defined on random normed modules and establish the elegant relationships among subdifferentiability, G\^ateaux--and Fr\'ech\'et--differentiabilities.

At last, by Guo and Yang's recently developed Ekeland's variational principle on complete random normed modules \cite{GY12} we show that $\varepsilon$--subgradients can be approximated by subgradients for a proper lower semicontinuous $L^0$--convex function in a simpler way than \cite{Yang12}, in particular we establish the property of G\^ateaux derivative at an approximate minimal point of a proper lower semicontinuous $\bar{L}^0$--valued function which is bounded below and G\^ateaux--differentiable on a complete random normed module, which is a generalization of the corresponding classical result of Ekeland in \cite{Eke74} from total order to partial order.

Finally, it should be pointed out that when we work on random convex analysis, we also has seen several other important works which are closely related to our work. For example, Frettelli and Maggis introduced conditionally evenly convex sets and studied the dual representation of conditionally evenly quasiconvex functions in \cite{FM14a,FM14b}, Eisele and Taieb studied weak topologies for locally convex $\lambda$--modules in \cite{ET15}, and Zapata gave randomized versions of Mazur lemma and Krein--\v{S}mulian theorem with application to conditional convex risk measures in \cite{Zap16}. Limited to space, we will investigate relations between these works and our work in the future papers.

The remainder of this paper is organized as follows : Section \ref{sec:2} is devoted to lower semicontinuity and Fenchel--Moreau duality for a proper $L^0$--convex function; Section \ref{sec:3} is devoted to the study of continuity for a proper $L^0$--convex function; in Section \ref{sec:4} we give the operation laws of subdifferentials for a proper $L^0$--convex function; in Section \ref{sec:5} we investigate G\^ateaux--and Fr\'ech\'et--differentiabilities for a proper $L^0$--convex function; in Section \ref{sec:6} we study the relation between subdifferentials and $\varepsilon$--subdifferentials for a proper lower semicontinuous $L^0$--convex function.

Throughout this paper, $(\Omega, \mathcal{F}, \mu)$ always denotes a given $\sigma$--finite measure space with $\mu(\Omega) > 0$, $K$ the scalar field $R$ of real numbers or $C$ of complex numbers, $L^0(\mathcal{F}, K)$ the algebra of equivalence classes of $K$--valued $\mathcal{F}$--measurable functions on $\Omega$ and $\bar{L}^0(\mathcal{F})$ the set of equivalence classes of extended real--valued $\mathcal{F}$--measurable functions on $\Omega$, where two functions are equivalent if they are equal almost everywhere (briefly, a.e.).

It is well known from \cite{DS57} that $\bar{L}^0(\mathcal{F})$ is an order complete lattice under the partial order : $\xi \leq \eta$ iff $\xi^0(\omega) \leq \eta^0(\omega)$ for almost all $\omega$ in $\Omega$, where $\xi^0$ and $\eta^0$ are arbitrarily chosen representatives of $\xi$ and $\eta$, respectively, further $\bigvee A$ and $\bigwedge A$ stand for the supremum and infimum of a subset $A$ of $\bar{L}^0(\mathcal{F})$, respectively. In addition, it is also well known that if $A$ is directed upwards (downwards) there exists a nondecreasing (nonincreasing) sequence $\{ a_n, n \in N \}$ $(\{ b_n, n \in N \})$ in $A$ such that $a_n \uparrow \bigvee A$ $(b_n \downarrow \bigwedge A)$. $\bar{L}^0(\mathcal{F})$ has the largest element and smallest element, denoted by $+\infty$ and $-\infty$, respectively, namely $+\infty$ and $-\infty$ stand for the equivalence classes of constant functions with values $+\infty$ and $-\infty$ on $\Omega$, respectively. Specially,  $L^0(\mathcal{F}, R)$ is order complete as a sublattice of $\bar{L}^0(\mathcal{F})$.

Let $A \in \mathcal{F}$ and $\xi$ and $\eta$ be in $\bar{L}^0(\mathcal{F})$, we say that $\xi > \eta$ on $A$ ($\xi \geq \eta$ on $A$) if $\xi^0(\omega) > \eta^0(\omega)$ (accordingly, $\xi^0(\omega) \geq \eta^0(\omega)$) for almost all $\omega \in A$, where $\xi^0$ and $\eta^0$ are arbitrarily chosen representatives of $\xi$ and $\eta$, respectively. Similarly, one can understand $\xi \neq \eta$ on $A$ and $\xi = \eta$ on $A$. Specially, $\tilde{I}_A$ stands for the equivalence class of $I_A$, where $I_A(\omega) = 1$ if $\omega \in A$, and 0 if $\omega \notin A$.

This paper always employs the following notation:

$L^0(\mathcal{F}) = L^0(\mathcal{F}, R)$;

$L^0_+(\mathcal{F}) = \{ \xi \in L^0(\mathcal{F})\ |\ \xi \geq 0 \}$;

$L^0_{++}(\mathcal{F}) = \{ \xi \in L^0(\mathcal{F})\ |\ \xi > 0$ on  $ \Omega  \}$.

Similarly, one can understand $\bar{L}^0_+(\mathcal{F})$ and  $\bar{L}^0_{++}(\mathcal{F})$.

\section{Lower semicontinuity and Fenchel--Moreau duality}\label{sec:2}

The main result in this section is Theorem \ref{the:2.13}. Let us first recapitulate some known terminology.

Let $E$ be a left module over the algebra $L^0(\mathcal{F}, K)$ (briefly, an $L^0(\mathcal{F}, K)$--module), the module multiplication $\xi \cdot x$ is simply denoted by $\xi x$ for any $\xi \in L^0(\mathcal{F}, K)$ and $x \in E$. A mapping $\|\cdot\| : E \to L^0_+(\mathcal{F})$ is called an $L^0$--seminorm on $E$ if it satisfies the following:\\
$(1)$ $\|\xi x\| = |\xi|\|x\|, \forall \xi \in L^0(\mathcal{F}, K)$ and $x \in E$;\\
$(2)$ $\|x+y\| \leq \|x\| + \|y\|, \forall x, y \in E$.

If, in addition, $\|x\| = 0$ implies $x = \theta$ (the null element of $E$), then $\|\cdot\|$ is called an $L^0$--norm on $E$, at this time the ordered pair $(E, \|\cdot\|)$ is called a random normed module (briefly, an $RN$ module) over $K$ with base $(\Omega, \mathcal{F}, \mu)$.

An ordered pair $(E, \mathcal{P})$ is called a random locally convex module (briefly, an $RLC$ module) over $K$ with base $(\Omega, \mathcal{F}, \mu)$ if $E$ is an $L^0(\mathcal{F}, K)$--module and $\mathcal{P}$ is a family of $L^0$--seminorms on $E$ such that $\bigvee \{ \|x\| : \|\cdot\| \in \mathcal{P} \} = 0$ implies $x = \theta$. Clearly, when $\mathcal{P}$ is a singleton consisting of an $L^0$--norm $\|\cdot\|$, an $RLC$ module $(E, \mathcal{P})$ becomes an $RN$ module $(E, \|\cdot\|)$, so the notion of an $RN$ module is a special case of that of an $RLC$ module.

Motivated by Schweizer and Sklar's work on random metric spaces and random normed linear spaces \cite{SS8305}, Guo introduced the notions of $RN$ modules and random inner product modules (briefly, $RIP$ modules) in \cite{Guo92,Guo93}. The importance of $RN$ modules lies in their $L^0(\mathcal{F}, K)$--module structure which makes $RN$ modules and their random conjugate spaces possess the same nice behaviors as normed spaces and their conjugate spaces. At almost the same time, Haydon, et.al also independently introduced the notion of an $RN$ module over the real number field $R$ with base a measure space (called randomly normed $L^0$--module in terms of \cite{HLR91}) as a tool for the study of ultrapowers of Lebesgue--Bochner function spaces. The notion of an $RLC$ module was first introduced by Guo and deeply developed by Guo and others in \cite{GP01,GXC09,GZ03}.

If $(\Omega, \mathcal{F}, \mu)$ is a finite measure space, we always use $\hat{\mu}$ for the probability defined by $\hat{\mu}(A) = \mu(A)/\mu(\Omega)$ for all $A \in \mathcal{F}$; if $(\Omega, \mathcal{F}, \mu)$ is a general $\sigma$--finite measure space, then we always use $\hat{\mu}$ for the probability defined by $\hat{\mu}(A) = \sum^{\infty}_{n=1}\frac{1}{2^n} \frac{\mu(A \cap \Omega_n)}{\mu(\Omega_n)}$ for all $A \in \mathcal{F}$, where $\{ \Omega_n : n \in N \}$ is a countable partition of $\Omega$ to $\mathcal{F}$ such that $0 < \mu(\Omega_n) < +\infty$ for all $n \in N$ and $N$ denotes the set of positive integers.

Given an $RLC$ module $(E, \mathcal{P})$ over $K$ with base $(\Omega, \mathcal{F}, \mu)$, we always denote by $\mathcal{P}(F)$ the family of finite nonempty subsets of $\mathcal{P}$. For each $Q \in \mathcal{P}(F)$, $\|\cdot\|_Q : E \to L^0_+(\mathcal{F})$ is the $L^0$-seminorm defined by $\|x\|_Q = \bigvee \{ \|x\| : \|\cdot\| \in Q \}$ for all $x \in E$. Now, we can speak of the  $(\varepsilon, \lambda)$--topology as follows:
\begin{proposition}\label{pro:2.1}
\cite{GP01,GZ03,GXC09}. Let $(E, \mathcal{P})$ be an $RLC$ module over $K$ with base $(\Omega, \mathcal{F}, \mu)$. For any positive numbers $\varepsilon$ and $\lambda$ with $0 < \lambda <1$ and for any $Q \in \mathcal{P}(F)$, let $N_{\theta}(Q, \varepsilon, \lambda) = \{ x \in E : \hat{\mu} \{ \omega \in \Omega\ |\ \|x\|_Q(\omega) < \varepsilon \} > 1- \lambda \}$. Then $\{ N_{\theta}(Q, \varepsilon, \lambda)\ |\ \varepsilon > 0, 0 < \lambda < 1$, and $Q \in \mathcal{P}(F) \}$ forms the local base at $\theta$ of some Hausdorff linear topology for $E$, called the $(\varepsilon, \lambda)$--topology induced by $\mathcal{P}$.
\end{proposition}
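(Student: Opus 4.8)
The plan is to verify that the family $\mathcal{B} := \{\, N_{\theta}(Q,\varepsilon,\lambda) : \varepsilon>0,\ 0<\lambda<1,\ Q\in\mathcal{P}(F)\,\}$ fulfils the standard axioms guaranteeing that a collection of subsets of a module is the neighborhood base at $\theta$ of a unique Hausdorff linear topology. Concretely, I would check four things: (i) $\mathcal{B}$ is a filter base all of whose members contain $\theta$; (ii) each member is balanced and absorbing; (iii) for each $U\in\mathcal{B}$ there is $V\in\mathcal{B}$ with $V+V\subseteq U$; and (iv) $\bigcap\mathcal{B}=\{\theta\}$. Once these hold, the generation theorem for vector topologies from a neighborhood base at the origin yields the assertion. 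The only ingredient beyond routine bookkeeping is a measure-continuity argument in (ii), and it is precisely there that the random structure enters.

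For the filter-base condition, given $N_{\theta}(Q_1,\varepsilon_1,\lambda_1)$ and $N_{\theta}(Q_2,\varepsilon_2,\lambda_2)$ I would set $Q=Q_1\cup Q_2\in\mathcal{P}(F)$, $\varepsilon=\min\{\varepsilon_1,\varepsilon_2\}$ and $\lambda=\min\{\lambda_1,\lambda_2\}$. Since $Q_i\subseteq Q$ forces $\|x\|_{Q_i}\le\|x\|_Q$ pointwise, the inclusion $\{\|x\|_Q<\varepsilon\}\subseteq\{\|x\|_{Q_i}<\varepsilon_i\}$ together with $1-\lambda\ge 1-\lambda_i$ gives $N_{\theta}(Q,\varepsilon,\lambda)\subseteq N_{\theta}(Q_1,\varepsilon_1,\lambda_1)\cap N_{\theta}(Q_2,\varepsilon_2,\lambda_2)$; that $\theta$ lies in every member is immediate from $\|\theta\|_Q=0$. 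For the halving condition I would take $V=N_{\theta}(Q,\varepsilon/2,\lambda/2)$ and invoke subadditivity: on $\{\|x\|_Q<\varepsilon/2\}\cap\{\|y\|_Q<\varepsilon/2\}$ one has $\|x+y\|_Q\le\|x\|_Q+\|y\|_Q<\varepsilon$, while a union bound gives this intersection $\hat{\mu}$-measure strictly above $1-\lambda/2-\lambda/2=1-\lambda$, so $x+y\in U$. Balancedness is equally direct: for $\alpha\in K$ with $|\alpha|\le 1$ the homogeneity $\|\alpha x\|_Q=|\alpha|\,\|x\|_Q\le\|x\|_Q$ yields $\alpha N_{\theta}(Q,\varepsilon,\lambda)\subseteq N_{\theta}(Q,\varepsilon,\lambda)$, and in fact the same computation with an $L^0$-scalar $\xi$, $|\xi|\le 1$, shows each member is balanced even under module multiplication.

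The step I expect to require the most care is the absorbing property, equivalently continuity of scalar multiplication at $\theta$. The key is that $\|x\|_Q\in L^0_+(\mathcal{F})$ is finite a.e., so $\hat{\mu}\{\|x\|_Q>M\}\to 0$ as $M\to\infty$; choosing $M$ with $\hat{\mu}\{\|x\|_Q>M\}<\lambda$ and then any scalar $s$ with $0<|s|<\varepsilon/M$, one gets $\{\|x\|_Q\le M\}\subseteq\{\,|s|\,\|x\|_Q<\varepsilon\}$, whence $\hat{\mu}\{\|sx\|_Q<\varepsilon\}>1-\lambda$ and $sx\in N_{\theta}(Q,\varepsilon,\lambda)$. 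This both absorbs $x$ and, combined with balancedness and the $V+V$ estimate, delivers joint continuity of addition and of $(s,x)\mapsto sx$. Finally, for the Hausdorff property I would argue that if $x$ belongs to every member then, fixing a singleton $Q=\{\|\cdot\|\}$ and letting $\lambda\downarrow 0$, we obtain $\hat{\mu}\{\|x\|<\varepsilon\}=1$ for every $\varepsilon>0$, hence $\|x\|=0$ for each $\|\cdot\|\in\mathcal{P}$; the defining axiom of an $RLC$ module, that $\bigvee\{\|x\|:\|\cdot\|\in\mathcal{P}\}=0$ implies $x=\theta$, then forces $x=\theta$, so $\bigcap\mathcal{B}=\{\theta\}$.
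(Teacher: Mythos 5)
Your proof is correct. The paper states Proposition~\ref{pro:2.1} as a known result cited from \cite{GP01,GZ03,GXC09} and contains no proof of its own to compare against; your verification of the standard axioms for a neighborhood base at the origin of a Hausdorff vector topology (filter base, balancedness, absorbency, the halving condition $V+V\subseteq U$, and trivial intersection) is exactly the expected argument, and each step --- in particular the measure--continuity argument giving absorbency from the a.e.\ finiteness of $\|x\|_Q$, the union bound showing $\hat{\mu}$ of the intersection exceeds $1-\lambda$, and the use of the $RLC$--module separation axiom $\bigvee\{\|x\| : \|\cdot\|\in\mathcal{P}\}=0\Rightarrow x=\theta$ for the Hausdorff property --- is sound.
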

From now on, for any $RLC$ module $(E, \mathcal{P})$ we always use $\mathcal{T}_{\varepsilon,\lambda}$ for the $(\varepsilon, \lambda)$--topology for $E$ induced by $\mathcal{P}$. It is clear that the absolute value $|\cdot|$ is an $L^0$--norm on $L^0(\mathcal{F}, K)$. $\mathcal{T}_{\varepsilon,\lambda}$ induced by $|\cdot|$ is exactly the topology of convergence locally in measure, namely a sequence $\{ \xi_n : n \in N \}$ converges in $\mathcal{T}_{\varepsilon,\lambda}$ to $\xi$ in $L^0(\mathcal{F}, K)$ if and only if it converges in measure to $\xi$ on each $A \in \mathcal{F}$ such that $0 < \mu(A) < +\infty$. It is easy to check that $(L^0(\mathcal{F}, K), \mathcal{T}_{\varepsilon,\lambda})$ is a metrizable topological algebra and for an $RLC$ module $(E, \mathcal{P})$ over $K$ with base $(\Omega, \mathcal{F}, \mu)$. $(E, \mathcal{T}_{\varepsilon,\lambda})$ is a topological module over the topological algebra $(L^0(\mathcal{F}, K), \mathcal{T}_{\varepsilon,\lambda})$.

In 2009, Filipovi\'c, Kupper and Vogelpoth introduced another kind of topology for $L^0(\mathcal{F}, K)$ : let $\varepsilon$ belong to $L^0_{++}(\mathcal{F})$ and $U(\varepsilon) = \{ \xi \in L^0(\mathcal{F}, K)\ |\ |\xi| \leq \varepsilon \}$. A subset $G$ of $L^0(\mathcal{F}, K)$ is said to be $\mathcal{T}_c$--open if for each $g \in G$ there exists some $U(\varepsilon)$ such that $g + U(\varepsilon) \subset G$. Denote by $\mathcal{T}_c$ the family of $\mathcal{T}_c$--open subsets of $L^0(\mathcal{F}, K)$, then $(L^0(\mathcal{F}, K), \mathcal{T}_c)$ is a topological ring, namely the multiplication and addition operations on $L^0(\mathcal{F}, K)$ are both jointly continuous. Let $E$ be an $L^0(\mathcal{F}, K)$--module  and $\mathcal{T}$ a topology for $E$, then the topological space $(E, \mathcal{T})$ is called a topological $L^0$--module in \cite{FKV09} if $(E, \mathcal{T})$ is a topological module over the topological ring $(L^0(\mathcal{F}, K), \mathcal{T}_c)$, namely the module operations: the module multiplication operation and addition operation are both jointly continuous. In \cite{FKV09}, a topological $L^0$--module $(E, \mathcal{T})$ is called a locally $L^0$--convex module if $\mathcal{T}$ possesses a local base at $\theta$ whose each element is $L^0$--convex, $L^0$--absorbent and $L^0$--balanced, at which time $\mathcal{T}$ is also called a locally $L^0$--convex topology. Here , a subset $U$ of $E$ is said to be $L^0$--convex if $\xi x + (1 - \xi)y \in U$ for all $x, y \in U$ and $\xi \in L^0_+(\mathcal{F})$ such that $0 \leq \xi \leq 1$; $L^0$--absorbent if for each $x \in E$ there exists some $\eta \in L^0_{++}(\mathcal{F})$ such that $\xi x \in U$ for any $\xi \in L^0(\mathcal{F}, K)$ such that $|\xi| \leq \eta$; and $L^0$--balanced if $\xi x \in U$ for all $x \in U$ and all $\xi \in L^0(\mathcal{F}, K)$ such that $|\xi| \leq 1$. The work in \cite{FKV09} leads directly to the following:
\begin{proposition} \label{pro:2.2}
\cite{FKV09}. Let $(E, \mathcal{P})$ be an $RLC$ module over $K$ with base $(\Omega, \mathcal{F}, \mu)$. For any $\varepsilon \in L^0_{++}(\mathcal{F})$ and $Q \in \mathcal{P}(F)$, let $N_{\theta}(Q, \varepsilon) = \{ x \in E\ |\ \|x\|_Q \leq \varepsilon \}$. Then $\{ N_{\theta}(Q, \varepsilon)\ |\ Q \in \mathcal{P}(F), \varepsilon \in L^0_{++}(\mathcal{F}) \}$ forms a local base at $\theta$ of some Hausdorff locally $L^0$--convex topology, which is called the locally $L^0$--convex topology induced by $\mathcal{P}$.
\end{proposition}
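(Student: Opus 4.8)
The plan is to realize $\mathcal{B} = \{N_\theta(Q,\varepsilon) : Q \in \mathcal{P}(F),\ \varepsilon \in L^0_{++}(\mathcal{F})\}$ as a neighborhood base at $\theta$ of a module topology, and then check the three qualitative properties ($L^0$--convex, $L^0$--absorbent, $L^0$--balanced) that upgrade it to a locally $L^0$--convex topology. Accordingly, I would define the topology to consist of those $G \subseteq E$ such that for every $x \in G$ there is some $N_\theta(Q,\varepsilon) \in \mathcal{B}$ with $x + N_\theta(Q,\varepsilon) \subseteq G$, and then verify that $\mathcal{B}$ is genuinely a base at $\theta$ for it. Everything reduces to the standard axioms for a base of neighborhoods of the origin of a topological module over the topological ring $(L^0(\mathcal{F},K),\mathcal{T}_c)$, each of which I would deduce from the two $L^0$--seminorm axioms satisfied by $\|\cdot\|_Q$, together with the general criterion supplied by the work in \cite{FKV09}.

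First I would record the qualitative properties of a single $N_\theta(Q,\varepsilon)$. $L^0$--balancedness is immediate from $\|\xi x\|_Q = |\xi|\,\|x\|_Q \le \|x\|_Q$ whenever $|\xi| \le 1$; $L^0$--convexity follows from the triangle inequality and positive homogeneity, since $\|\lambda x + (1-\lambda)y\|_Q \le \lambda\|x\|_Q + (1-\lambda)\|y\|_Q \le \varepsilon$ for $0 \le \lambda \le 1$ in $L^0_+(\mathcal{F})$; and $L^0$--absorbency follows by taking, for a given $x$, the element $\eta = \varepsilon/(1+\|x\|_Q) \in L^0_{++}(\mathcal{F})$, so that $|\xi| \le \eta$ forces $\|\xi x\|_Q = |\xi|\,\|x\|_Q \le \varepsilon$.

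Next I would check the base axioms. The filter--base (directedness) property is supplied by the observation that $Q_1 \cup Q_2 \in \mathcal{P}(F)$ with $\|\cdot\|_{Q_1 \cup Q_2} = \|\cdot\|_{Q_1} \vee \|\cdot\|_{Q_2}$, whence $N_\theta(Q_1 \cup Q_2, \varepsilon_1 \wedge \varepsilon_2) \subseteq N_\theta(Q_1,\varepsilon_1) \cap N_\theta(Q_2,\varepsilon_2)$, where $\varepsilon_1 \wedge \varepsilon_2 \in L^0_{++}(\mathcal{F})$. Continuity of addition comes from the triangle inequality in the form $N_\theta(Q,\varepsilon/2) + N_\theta(Q,\varepsilon/2) \subseteq N_\theta(Q,\varepsilon)$, together with the symmetry furnished by $L^0$--balancedness. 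For Hausdorffness I would use the defining property of an $RLC$ module: if $x \ne \theta$ then $\bigvee\{\|x\| : \|\cdot\| \in \mathcal{P}\} \ne 0$, so some $\|\cdot\| \in \mathcal{P}$ has $\xi := \|x\| > 0$ on a set $A$ of positive measure; taking $\varepsilon = \frac{1}{2}\xi\tilde{I}_A + \tilde{I}_{A^c} \in L^0_{++}(\mathcal{F})$ gives $x \notin N_\theta(\{\|\cdot\|\},\varepsilon)$, so that $\bigcap \mathcal{B} = \{\theta\}$.

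The main obstacle I expect is the continuity of the module multiplication $L^0(\mathcal{F},K) \times E \to E$ with respect to $\mathcal{T}_c$ on the scalar ring, since $L^0$--scalars are unbounded and every estimate must respect the partial order rather than a single real threshold. Here I would fix $(\xi_0, x_0)$, write $\xi x - \xi_0 x_0 = \xi_0(x - x_0) + (\xi - \xi_0)(x - x_0) + (\xi - \xi_0)x_0$, and bound $\|\xi x - \xi_0 x_0\|_Q$ by $|\xi_0|\,\|x-x_0\|_Q + |\xi-\xi_0|\,\|x-x_0\|_Q + |\xi-\xi_0|\,\|x_0\|_Q$. Choosing the $L^0_{++}(\mathcal{F})$--radii $\delta = 1 \wedge \varepsilon/(3(1+\|x_0\|_Q))$ for the scalar neighborhood $U(\delta) = \{\zeta \in L^0(\mathcal{F},K) : |\zeta| \le \delta\}$ and $\varepsilon' = \varepsilon/(3(1+|\xi_0|))$ for $N_\theta(Q,\varepsilon')$ makes each of the three terms at most $\varepsilon/3$, which yields the inclusion $(\xi_0 + U(\delta))\,(x_0 + N_\theta(Q,\varepsilon')) \subseteq \xi_0 x_0 + N_\theta(Q,\varepsilon)$. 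The delicacy lies entirely in this $L^0$--valued, order--sensitive bookkeeping of the radii; once it is in place, the general criterion guarantees a unique Hausdorff locally $L^0$--convex topology having $\mathcal{B}$ as a base at $\theta$.
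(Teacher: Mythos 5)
Your proposal is correct; the paper itself states this proposition without proof (citing \cite{FKV09}), and your argument is exactly the standard direct verification used there: the three qualitative properties of each $N_\theta(Q,\varepsilon)$ follow from the two $L^0$--seminorm axioms, directedness from $\|\cdot\|_{Q_1\cup Q_2}=\|\cdot\|_{Q_1}\vee\|\cdot\|_{Q_2}$, Hausdorffness from the separating condition $\bigvee\{\|x\|:\|\cdot\|\in\mathcal{P}\}=0\Rightarrow x=\theta$, and joint continuity of the module multiplication from the three--term decomposition with the $L^0_{++}$--valued radii $\delta=1\wedge\varepsilon/(3(1+\|x_0\|_Q))$ and $\varepsilon'=\varepsilon/(3(1+|\xi_0|))$, all of which check out.
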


From now on, for an $RLC$ module $(E, \mathcal{P})$, we always use $\mathcal{T}_c$ for the locally $L^0$--convex topology induced by $\mathcal{P}$. Recently, it is proved independently in \cite{WG15,Zap17} that the converse of Proposition \ref{pro:2.2} is no longer true, namely not every locally $L^0$--convex topology is necessarily induced by a family of $L^0$--seminorms.

For the sake of convenience, this paper needs the following:
\begin{definition}\label{def:2.3}
\cite{Guo10}. Let $E$ be an $L^0(\mathcal{F}, K)$--module and $G$ a subset of $E$. $G$ is said to have the countable concatenation property if for each sequence $\{ g_n: n \in N \}$ in $G$ and each countable partition $\{ A_n: n \in N \}$ of $\Omega$ to $\mathcal{F}$ there always exists $g \in G$ such that ${\tilde I}_{A_n}g = {\tilde I}_{A_n}g_n$ for each $n \in N$. If $E$ has the countable concatenation property, $H_{cc}(G)$ denotes the countable concatenation hull of $G$, namely the smallest set containing $G$ and having the countable concatenation property.
\end{definition}

\begin{remark}\label{rem:2.4}
As pointed out in \cite{Guo10}, when $(E, \mathcal{P})$ is an $RLC$ module, g in Definition \ref{def:2.3}
must be unique, at which time we can write $g = \sum^{\infty}_{n=1}\tilde{I}_{A_n}g_n$.
\end{remark}

In \cite{FKV09}, a family $\mathcal{P}$ of $L^0$--seminorms on an $L^0(\mathcal{F}, K)$--module is said to have the countable concatenation property if each $L^0$--seminorm $\|\cdot\| : = \sum^{\infty}_{n=1}\tilde{I}_{A_n}\|\cdot\|_{Q_n}$ still belongs to $\mathcal{P}$ for each countable partition $\{ A_n : n \in N \}$ of $\Omega$ to $\mathcal{F}$ and each sequence $\{ Q_n : n \in N \}$ in $\mathcal{P}(F)$. We always denote $\mathcal{P}_{cc} = \{ \sum^{\infty}_{n=1}\tilde{I}_{A_n}\cdot\|\cdot\|_{Q_n} : \{ A_n : n \in N\}$ is a countable partition of $\Omega$ to $\mathcal{F}$ and $\{ Q_n : n \in N \}$ a sequence of $\mathcal{P}(F) \}$, called the countable concatenation hull of $\mathcal{P}$. Clearly, $\mathcal{P}$ has the countable concatenation property iff $\mathcal{P}_{cc} = \mathcal{P}$.

In random functional analysis, the notion of random conjugate spaces is crucial, which is defined as follows:

\begin{definition}\label{def:2.5}
\cite{Guo10}. Let $(E, \mathcal{P})$ be an $RLC$ module over $K$ with base $(\Omega, \mathcal{F}, \mu)$. Denote by $(E, \mathcal{P})^{\ast}_{\varepsilon,\lambda}$ the $L^0(\mathcal{F}, K)$--module of continuous module homomorphisms from $(E, \mathcal{T}_{\varepsilon,\lambda})$ to $(L^0(\mathcal{F}, K), \mathcal{T}_{\varepsilon,\lambda})$, called the random conjugate space of $(E, \mathcal{P})$ under $\mathcal{T}_{\varepsilon,\lambda}$; denote by $(E, \mathcal{P})^{\ast}_c$ the $L^0(\mathcal{F}, K)$--module of continuous module homomorphisms from $(E, \mathcal{T}_c)$ to $(L^0(\mathcal{F}, K), \mathcal{T}_c)$, called the random conjugate space of $(E, \mathcal{P})$ under $\mathcal{T}_c$.
\end{definition}

From now on, when $\mathcal{P}$ is understood, we often briefly write $E^{\ast}_{\varepsilon,\lambda}$ for $(E, \mathcal{P})^{\ast}_{\varepsilon,\lambda}$ and $E^{\ast}_c$ for $(E, \mathcal{P})^{\ast}_c$. When $\mathcal{P}$ has the countable concatenation property, it is proved in \cite{Guo10} that $E^{\ast}_{\varepsilon,\lambda} = E^{\ast}_c$. In general, $E^{\ast}_c \subset E^{\ast}_{\varepsilon,\lambda}$ and $E^{\ast}_{\varepsilon,\lambda}$ has the countable concatenation property. Recently, in \cite{GZZ15a} Guo, et.al established the following precise relation between $E^{\ast}_{\varepsilon,\lambda}$ and $E^{\ast}_c$.

\begin{proposition}\label{pro:2.6}
\cite{GZZ15a}. Let $(E, \mathcal{P})$ be an $RLC$ module. Then $E^{\ast}_{\varepsilon,\lambda} = H_{cc}(E^{\ast}_c)$.
\end{proposition}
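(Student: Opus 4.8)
The plan is to prove the two inclusions $H_{cc}(E^{\ast}_c) \subseteq E^{\ast}_{\varepsilon,\lambda}$ and $E^{\ast}_{\varepsilon,\lambda} \subseteq H_{cc}(E^{\ast}_c)$ separately. The first is immediate from facts recorded just before the statement: since $E^{\ast}_c \subseteq E^{\ast}_{\varepsilon,\lambda}$, since $E^{\ast}_{\varepsilon,\lambda}$ has the countable concatenation property, and since $H_{cc}(E^{\ast}_c)$ is by definition the smallest set containing $E^{\ast}_c$ and having this property, we get $H_{cc}(E^{\ast}_c) \subseteq E^{\ast}_{\varepsilon,\lambda}$. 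Moreover, because $E^{\ast}_{\varepsilon,\lambda}$ already has the countable concatenation property, $H_{cc}(E^{\ast}_c)$ consists of exactly those $f$ admitting a representation $f = \sum_{n}\tilde{I}_{A_n}f_n$ with $\{A_n : n \in N\}$ a countable partition of $\Omega$ to $\mathcal{F}$ and each $f_n \in E^{\ast}_c$; this is the description I shall verify for the reverse inclusion.

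For the reverse inclusion I would pass to the countable concatenation hull $\mathcal{P}_{cc}$ of the defining family of $L^0$--seminorms. The first step is to check that $\mathcal{P}$ and $\mathcal{P}_{cc}$ induce the same $(\varepsilon,\lambda)$--topology on $E$, whence $(E,\mathcal{P})^{\ast}_{\varepsilon,\lambda} = (E,\mathcal{P}_{cc})^{\ast}_{\varepsilon,\lambda}$. One inclusion of topologies is clear since $\mathcal{P} \subseteq \mathcal{P}_{cc}$; for the other, given $Q' \in \mathcal{P}_{cc}(F)$ I would write $\|\cdot\|_{Q'}$ piecewise as $\tilde{I}_{B_m}\|\cdot\|_{Q'} = \tilde{I}_{B_m}\|\cdot\|_{\tilde{Q}_m}$ along a countable partition $\{B_m : m \in N\}$ with $\tilde{Q}_m \in \mathcal{P}(F)$, obtained from a common refinement of the partitions defining the finitely many concatenated seminorms constituting $Q'$. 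Then I exploit the finiteness of $\hat{\mu}$: given $\lambda$, truncate at an index $M$ with $\hat{\mu}(\bigcup_{m>M}B_m) < \lambda/2$, so that on $\bigcup_{m \leq M}B_m$ the seminorm $\|\cdot\|_{Q'}$ is dominated by $\|\cdot\|_{Q}$ for the single finite set $Q = \bigcup_{m\leq M}\tilde{Q}_m \in \mathcal{P}(F)$; this yields $N_{\theta}(Q,\varepsilon,\lambda/2) \subseteq N_{\theta}(Q',\varepsilon,\lambda)$. Since $\mathcal{P}_{cc}$ has the countable concatenation property, the result quoted from \cite{Guo10} gives $(E,\mathcal{P}_{cc})^{\ast}_{\varepsilon,\lambda} = (E,\mathcal{P}_{cc})^{\ast}_c$. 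Combining, every $f \in E^{\ast}_{\varepsilon,\lambda}$ is a $\mathcal{T}_c$--continuous module homomorphism with respect to $\mathcal{P}_{cc}$.

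The remaining, and hardest, step is to show $(E,\mathcal{P}_{cc})^{\ast}_c \subseteq H_{cc}(E^{\ast}_c)$, that is, to realize such an $f$ as a concatenation of $\mathcal{T}_c$--continuous homomorphisms relative to $\mathcal{P}$. By the well known boundedness characterization of $\mathcal{T}_c$--continuity (see \cite{FKV09,Guo10}), there exist $Q' \in \mathcal{P}_{cc}(F)$ and $\eta \in L^0_+(\mathcal{F})$ with $|f(x)| \leq \eta\|x\|_{Q'}$ for all $x \in E$. Using the same piecewise decomposition $\tilde{I}_{B_m}\|\cdot\|_{Q'} = \tilde{I}_{B_m}\|\cdot\|_{\tilde{Q}_m}$ with $\tilde{Q}_m \in \mathcal{P}(F)$, I would set $f_m := \tilde{I}_{B_m}f$; each $f_m$ is again a module homomorphism and satisfies $|f_m(x)| = \tilde{I}_{B_m}|f(x)| \leq \eta\,\tilde{I}_{B_m}\|x\|_{\tilde{Q}_m} \leq \eta\|x\|_{\tilde{Q}_m}$, so $f_m \in E^{\ast}_c$, while $\tilde{I}_{B_m}f = \tilde{I}_{B_m}f_m$. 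Hence $f = \sum_m \tilde{I}_{B_m}f_m \in H_{cc}(E^{\ast}_c)$ in the sense of Remark \ref{rem:2.4}, completing the argument. I expect the genuine difficulty to lie precisely in this localization: one must have in hand the sharp boundedness characterization of $\mathcal{T}_c$--continuity, must produce the countable partition reducing a $\mathcal{P}_{cc}(F)$--seminorm to honest $\mathcal{P}(F)$--seminorms cell by cell, and must check that $\tilde{I}_{B_m}f$ remains a module homomorphism whose concatenation recovers $f$.
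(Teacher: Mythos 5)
Your proof is correct and follows the same route the paper indicates in Remark \ref{rem:2.7} (the paper itself only cites \cite{GZZ15a} for this proposition): reduce to $\mathcal{P}_{cc}$ via the equality of the two $(\varepsilon,\lambda)$--topologies, invoke $(E,\mathcal{P}_{cc})^{\ast}_{\varepsilon,\lambda}=(E,\mathcal{P}_{cc})^{\ast}_c$, and then localize a $\mathcal{P}_{cc}$--bounded homomorphism along the countable partition underlying the concatenated seminorm. The details you supply --- the truncation argument using the finiteness of $\hat{\mu}$ and the cell-by-cell boundedness estimate $|f_m(x)|\leq\eta\|x\|_{\tilde{Q}_m}$ --- are sound.
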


\begin{remark}\label{rem:2.7}
For an $RLC$ module $(E, \mathcal{P})$, since $\mathcal{P}$ and $\mathcal{P}_{cc}$ induce the same $(\varepsilon, \lambda)$--topology on $E$, then $(E, \mathcal{P})^{\ast}_{\varepsilon,\lambda} = (E, \mathcal{P}_{cc})^{\ast}_{\varepsilon,\lambda}$. Since $\mathcal{P}_{cc}$ has the countable concatenation property, $(E, \mathcal{P}_{cc})^{\ast}_{\varepsilon,\lambda} = (E, \mathcal{P}_{cc})^{\ast}_c$, in fact, Proposition \ref{pro:2.6} has showed that $E^{\ast}_{\varepsilon,\lambda}= (E, \mathcal{P}_{cc})^{\ast}_c = H_{cc}(E^{\ast}_c)$!
\end{remark}

To state and prove the main result of this section, we still need Lemmas \ref{lem:2.8} and \ref{lem:2.9} below.

\begin{lemma}\label{lem:2.8}
\cite{Guo10}. Let $(E, \mathcal{P})$ be an $RLC$ module with base $(\Omega, \mathcal{F}, \mu)$ and $G \subset E$ such that $G$ has the countable concatenation property. Then $\bar{G}_{\varepsilon,\lambda} = \bar{G}_c$, where $\bar{G}_{\varepsilon,\lambda}$ and $\bar{G}_c$ stand for the closures of $G$ under $\mathcal{T}_{\varepsilon,\lambda}$ and $\mathcal{T}_c$, respectively.
\end{lemma}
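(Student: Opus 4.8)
The plan is to prove the two inclusions $\bar{G}_c \subseteq \bar{G}_{\varepsilon,\lambda}$ and $\bar{G}_{\varepsilon,\lambda} \subseteq \bar{G}_c$ separately, the first being essentially formal and the second being the place where the countable concatenation property of $G$ enters. For the first inclusion I would record that $\mathcal{T}_{\varepsilon,\lambda}$ is coarser than $\mathcal{T}_c$: given any basic $(\varepsilon,\lambda)$--neighborhood $N_{\theta}(Q,\varepsilon,\lambda)$ of $\theta$ with $\varepsilon>0$ real, taking the constant $\delta=\varepsilon/2$ viewed in $L^0_{++}(\mathcal{F})$ yields $N_{\theta}(Q,\delta)\subseteq N_{\theta}(Q,\varepsilon,\lambda)$, because $\|x\|_Q\le\varepsilon/2$ a.e. forces $\hat{\mu}\{\|x\|_Q<\varepsilon\}=1>1-\lambda$. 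Hence every $\mathcal{T}_{\varepsilon,\lambda}$--open set is $\mathcal{T}_c$--open, so every $\mathcal{T}_{\varepsilon,\lambda}$--closed set is $\mathcal{T}_c$--closed, and therefore the $\mathcal{T}_c$--closure is contained in the $\mathcal{T}_{\varepsilon,\lambda}$--closure. This inclusion requires no hypothesis on $G$.

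The substance lies in $\bar{G}_{\varepsilon,\lambda}\subseteq\bar{G}_c$. Fix $x\in\bar{G}_{\varepsilon,\lambda}$; I must show that every basic $\mathcal{T}_c$--neighborhood $x+N_{\theta}(Q,\varepsilon)$, with $Q\in\mathcal{P}(F)$ and $\varepsilon\in L^0_{++}(\mathcal{F})$, meets $G$. The key observation is that for this single fixed $Q$ the sets $x+N_{\theta}(Q,\frac{1}{n},\frac{1}{n})$ are $\mathcal{T}_{\varepsilon,\lambda}$--neighborhoods of $x$, so each meets $G$; choosing $g_n$ in the intersection produces a sequence $\{g_n\}$ in $G$ with $\hat{\mu}\{\|g_n-x\|_Q\ge\frac{1}{n}\}\le\frac{1}{n}$, that is, $\|g_n-x\|_Q\to 0$ in $\hat{\mu}$--probability. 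In this way one seminorm $\|\cdot\|_Q$ is isolated and the approximation is reduced to a countable, scalar-type convergence in probability.

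From here I would invoke the classical fact that convergence in measure admits an almost everywhere convergent subsequence, extracting $\{g_{n_k}\}$ with $\xi_k:=\|g_{n_k}-x\|_Q\to 0$ a.e., and then use the countable concatenation property of $G$ to upgrade this a.e. convergence into one element of $G$ that is $\varepsilon$--close everywhere. Concretely, set $A_1=\{\xi_1\le\varepsilon\}$ and $A_k=\{\xi_k\le\varepsilon\}\setminus\bigcup_{j<k}A_j$; since $\xi_k\to 0$ a.e. and $\varepsilon>0$ a.e., the $\{A_k\}$ form a countable partition of $\Omega$, so the countable concatenation property gives $g\in G$ with $\tilde{I}_{A_k}g=\tilde{I}_{A_k}g_{n_k}$ for all $k$. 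The local property of $L^0$--seminorms, $\tilde{I}_A\|y\|_Q=\|\tilde{I}_A y\|_Q$, then yields $\tilde{I}_{A_k}\|g-x\|_Q=\tilde{I}_{A_k}\xi_k\le\tilde{I}_{A_k}\varepsilon$ on each $A_k$; since the $A_k$ cover $\Omega$ this gives $\|g-x\|_Q\le\varepsilon$ a.e. Thus $g\in(x+N_{\theta}(Q,\varepsilon))\cap G$, and as $Q$ and $\varepsilon$ were arbitrary we conclude $x\in\bar{G}_c$.

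The main obstacle is exactly the passage in the last step from the \emph{in probability} control furnished by $\mathcal{T}_{\varepsilon,\lambda}$ to the \emph{almost everywhere} control demanded by $\mathcal{T}_c$: the $(\varepsilon,\lambda)$--closure only supplies elements of $G$ that are close with large probability, whereas a $\mathcal{T}_c$--neighborhood insists on a uniform a.e. bound by the possibly-small $\varepsilon\in L^0_{++}(\mathcal{F})$. The countable concatenation property of $G$ is precisely the device that bridges this gap, letting the a.e.-convergent subsequence be spliced into a single admissible approximant; without it the reverse inclusion fails in general. I would also verify the harmless measure-theoretic point that $\hat{\mu}$ and $\mu$ share the same null sets, so that ``almost everywhere'' is unambiguous throughout.
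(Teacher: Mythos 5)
Your proof is correct, and since the paper only cites this lemma from \cite{Guo10} without reproducing a proof, it is worth noting that your argument is essentially the standard one from that reference: the inclusion $\bar{G}_c \subseteq \bar{G}_{\varepsilon,\lambda}$ comes from $\mathcal{T}_c$ being finer than $\mathcal{T}_{\varepsilon,\lambda}$, and the reverse inclusion extracts an a.e.--convergent subsequence from convergence in probability and splices it via the countable concatenation property of $G$. All the steps check out, including the use of $\tilde{I}_{A}\|y\|_Q = \|\tilde{I}_{A}y\|_Q$ and the reduction to a single $\|\cdot\|_Q$ with $\varepsilon \in L^0_{++}(\mathcal{F})$.
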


\begin{lemma}\label{lem:2.9}
\cite{GZZ15a}. Let $(E, \mathcal{P})$ be an $RLC$ module such that $\mathcal{P}$ has the countable concatenation property, $M$ a $\mathcal{T}_c$--closed subset with the countable concatenation property and $x \in E$ such that $\tilde{I}_{A}\{x\} \bigcap \tilde{I}_{A}M = \emptyset$ for all $A \in \mathcal{F}$ with $\mu(A) > 0$. Then there is an $L^0$--convex, $L^0$--absorbent and $L^0$--balanced $\mathcal{T}_c$--neighborhood $U$ of $\theta$ such that $\tilde{I}_{A}(x + U) \bigcap \tilde{I}_{A}(M + U) = \emptyset$ for all $A \in \mathcal{F}$ with $\mu(A) > 0$.
\end{lemma}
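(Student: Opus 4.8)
The plan is to split the argument into a routine ``halving'' reduction and a substantive construction of a single separating $L^0$-seminorm by a random-distance argument combined with an exhaustion. Since every basic neighborhood $N_{\theta}(Q,\varepsilon)$ is $L^0$-convex, $L^0$-absorbent and $L^0$-balanced, it suffices to produce $Q^{\ast}\in\mathcal{P}(F)$ and $\varepsilon^{\ast}\in L^0_{++}(\mathcal{F})$ with $\tilde{I}_A(x+N_{\theta}(Q^{\ast},\varepsilon^{\ast}))\cap\tilde{I}_A M=\emptyset$ for all $A$ with $\mu(A)>0$, and then take $U=N_{\theta}(Q^{\ast},\varepsilon^{\ast}/2)$: a point in $\tilde{I}_A(x+U)\cap\tilde{I}_A(M+U)$ would give $\tilde{I}_A(x-m)=\tilde{I}_A(u_2-u_1)$ with $\|u_2-u_1\|_{Q^{\ast}}\le\varepsilon^{\ast}$ (using $-U=U$ and $U+U\subseteq N_{\theta}(Q^{\ast},\varepsilon^{\ast})$), contradicting the one-sided statement. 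A direct computation shows that this one-sided statement holds for every positive $A$ if and only if $\|x-m\|_{Q^{\ast}}>\varepsilon^{\ast}$ a.e. for every $m\in M$; so it is enough to find a finite $Q^{\ast}$ with the random distance $d_{Q^{\ast}}:=\bigwedge_{m\in M}\|x-m\|_{Q^{\ast}}$ strictly positive a.e., and to set $\varepsilon^{\ast}$ just below it. Throughout I would use that, by Lemma \ref{lem:2.8}, $M$ is also $\mathcal{T}_{\varepsilon,\lambda}$-closed.

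The heart of the matter is the following local claim: for every $A$ with $\mu(A)>0$ there are $Q\in\mathcal{P}(F)$ and $B\subseteq A$ with $\mu(B)>0$ such that $d_Q>0$ a.e. on $B$, where $d_Q:=\bigwedge_{m\in M}\|x-m\|_Q$. I would prove it by contradiction. The countable concatenation property of $M$ makes $\{\|x-m\|_Q:m\in M\}$ downward directed for each fixed $Q$: given $m_1,m_2$, the element $m_3=\tilde{I}_{B_0}m_1+\tilde{I}_{B_0^c}m_2$ with $B_0=\{\|x-m_1\|_Q\le\|x-m_2\|_Q\}$ lies in $M$ and realizes the pointwise minimum, so there is a nonincreasing sequence $\|x-m_n\|_Q\downarrow d_Q$. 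If the claim failed on some $A$, then $d_Q=0$ a.e. on $A$ for every $Q\in\mathcal{P}(F)$. Fixing any $m_0\in M$ and putting $m^{\ast}=\tilde{I}_A x+\tilde{I}_{A^c}m_0$, I would show $m^{\ast}\in\bar{M}_{\varepsilon,\lambda}$: for each basic $\mathcal{T}_{\varepsilon,\lambda}$-neighborhood determined by $(Q,\epsilon,\lambda)$ the glued elements $\tilde{I}_A m_n+\tilde{I}_{A^c}m_0\in M$ satisfy $\|(\tilde{I}_A m_n+\tilde{I}_{A^c}m_0)-m^{\ast}\|_Q=\tilde{I}_A\|x-m_n\|_Q\to 0$ a.e., hence lie in that neighborhood for large $n$. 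Since $M=\bar{M}_{\varepsilon,\lambda}$ we obtain $m^{\ast}\in M$ with $\tilde{I}_A m^{\ast}=\tilde{I}_A x$, contradicting $\tilde{I}_A\{x\}\cap\tilde{I}_A M=\emptyset$.

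With the local claim in hand I would invoke $\sigma$-finiteness and a standard essential-supremum exhaustion to obtain a countable partition $\{A_n:n\in N\}$ of $\Omega$ together with $Q_n\in\mathcal{P}(F)$ such that $d_{Q_n}>0$ a.e. on $A_n$. The countable concatenation property of $\mathcal{P}$ is then used decisively: the concatenated $L^0$-seminorm $\|\cdot\|^{\ast}:=\sum_{n}\tilde{I}_{A_n}\|\cdot\|_{Q_n}$ belongs to $\mathcal{P}_{cc}=\mathcal{P}$, so $Q^{\ast}:=\{\|\cdot\|^{\ast}\}\in\mathcal{P}(F)$; taking $\varepsilon^{\ast}:=\sum_n\tilde{I}_{A_n}\big((d_{Q_n}\wedge 1)/2\big)\in L^0_{++}(\mathcal{F})$, one checks $\|x-m\|_{Q^{\ast}}=\sum_n\tilde{I}_{A_n}\|x-m\|_{Q_n}\ge\sum_n\tilde{I}_{A_n}d_{Q_n}>\varepsilon^{\ast}$ a.e. for every $m\in M$, which is exactly the required one-sided separation. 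The halving reduction then completes the proof.

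The main obstacle I expect is the local claim, and specifically the need for a \emph{finite} $Q$ on each piece: a finite $Q$ controls only finitely many $L^0$-seminorms, so $d_Q$ may vanish on a positive set even when $x$ is genuinely away from $M$, which is precisely why the conclusion requires $\mathcal{T}_c$ (with its $L^0_{++}$-valued radii) rather than $\mathcal{T}_{\varepsilon,\lambda}$. The resolution is that if $d_Q$ vanished for \emph{every} finite $Q$ on a common positive set $A$, then $x$ would be approximable by $M$ in every $\mathcal{T}_{\varepsilon,\lambda}$-neighborhood on $A$, and the concatenation of a fixed $m_0$ with the minimizing sequences, together with the $\mathcal{T}_{\varepsilon,\lambda}$-closedness from Lemma \ref{lem:2.8}, manufactures an element of $M$ coinciding with $x$ on $A$. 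This is exactly the point at which the hypotheses that both $M$ and $\mathcal{P}$ possess the countable concatenation property are indispensable.
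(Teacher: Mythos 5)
The paper does not actually prove Lemma \ref{lem:2.9}: it is imported from \cite{GZZ15a} without proof, so there is no in-paper argument to compare yours against. Judged on its own, your proof is correct, and every step I checked goes through. The halving reduction is sound (using $-U=U$ and $U+U\subseteq N_{\theta}(Q^{\ast},\varepsilon^{\ast})$), and the translation of the one-sided separation into ``$\|x-m\|_{Q^{\ast}}>\varepsilon^{\ast}$ a.e.\ for every $m$'' is exactly right. The substantive part --- the local claim --- correctly exploits the countable concatenation property of $M$ twice: once to make $\{\|x-m\|_{Q}:m\in M\}$ downward directed (so that a nonincreasing sequence realizes $d_{Q}$, by the lattice fact recalled in Section \ref{sec:1}), and once to glue the minimizing sequence on $A$ with a fixed $m_{0}$ off $A$, producing an element of $\bar{M}_{\varepsilon,\lambda}=M$ (via Lemma \ref{lem:2.8}) that agrees with $x$ on $A$, contradicting the hypothesis. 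Note that your argument never uses $L^{0}$--convexity of $M$, which is consistent with the statement; the paper's own random-distance remarks before Proposition \ref{pro:2.17} are stated only for $L^{0}$--convex $M$, so your directedness-via-concatenation substitute is the right move. The final assembly --- a maximal disjoint (hence countable) family exhausting $\Omega$, followed by concatenating the $\|\cdot\|_{Q_{n}}$ into a single $L^{0}$--seminorm in $\mathcal{P}_{cc}=\mathcal{P}$ and the radii $d_{Q_{n}}\wedge 1$ into $\varepsilon^{\ast}\in L^{0}_{++}(\mathcal{F})$ --- is exactly where both countable concatenation hypotheses earn their keep, and you identify this correctly. The only place a referee might ask for a line more detail is the ``standard essential-supremum exhaustion,'' but that argument (any disjoint family of positive-$\hat{\mu}$-measure sets is countable, and maximality plus the local claim forces it to cover $\Omega$) is routine and used elsewhere in the paper, e.g.\ in the proof of Theorem \ref{the:2.13}.
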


\begin{remark}\label{rem:2.10}
The paper \cite{GZZ15a} provides a counterexample showing that if $M$ is merely a $\mathcal{T}_c$--closed subset such that $\tilde{I}_{A}M + \tilde{I}_{A^c}M \subset M$ for all $A \in \mathcal{F}$ then Lemma \ref{lem:2.9} is not necessarily true.
\end{remark}

\begin{remark}\label{rem:2.11}
In Lemma \ref{lem:2.9}, if for some $\mathcal{F}$-measurable subset $B$ such that $\mu(B) >0$ and $\tilde{I}_{A}\{x\} \bigcap \tilde{I}_{A}M = \emptyset$ for all $A \in \mathcal{F}$ such that $A \subset B$ and $\mu(A) > 0$, then we still can have that there is an $L^0$--convex, $L^0$--absorbent and $L^0$--balanced $\mathcal{T}_c$--neighborhood $U$ of $\theta$ such that $\tilde{I}_{A}(x + U) \bigcap \tilde{I}_{A}(M + U) = \emptyset$ for all $A \in \mathcal{F}$ with $A \subset B$ and $\mu(A) > 0$. In fact, let $E_B = \tilde{I}_{B}E : = \{ \tilde{I}_{B}x : x \in E\}$, $\mathcal{P}_B = \{ \|\cdot\||_{E_B} : \|\cdot\| \in \mathcal{P} \}$, where $\|\cdot\||_{E_B}$ stands for the restriction of $\|\cdot\|$ to $E_B$, then $(E_B, \mathcal{P}_B)$ is still an $RLC$ module with base $(B, B \bigcap \mathcal{F}, \mu_B)$, where $B \bigcap \mathcal{F} = \{ B \bigcap A : A \in \mathcal{F} \}$ and $\mu_B(B \bigcap A) = \mu (B \bigcap A)$ for all $A \in \mathcal{F}$, further in \cite{GZZ15b} Guo, et.al have showed that if $G$ is $\mathcal{T}_c$--open (or, $\mathcal{T}_c$--closed) in $(E, \mathcal{T}_c)$ such that $\tilde{I}_{B}G + \tilde{I}_{B^c}G \subset G$ then $\tilde{I}_{B}G$ is also $\mathcal{T}_c$--open (accordingly, $\mathcal{T}_c$--closed) in $(E_B, \mathcal{P}_B)$. Thus one can see this point by considering the relative topology.
\end{remark}

Let $(E, \mathcal{P})$ be an $RLC$ module over $R$ with base $(\Omega, \mathcal{F}, \mu)$ and $f : E \to \bar{L}^0(\mathcal{F})$. $dom(f) : = \{ x \in E\ |\ f(x) < +\infty$ on $\Omega$ \}, called the effective domain of $f$, and $epi(f) : = \{ (x, r) \in E \times L^0(\mathcal{F})\ |\ f(x) \leq r \}$, called the epigraph of $f$. $f$ is said to be local if $\tilde{I}_{A}f(x) = \tilde{I}_{A}(f(\tilde{I}_{A}x))$ for all $A \in \mathcal{F}$ and all $x \in E$; $f$ is said to be proper if $dom(f)$ is nonempty and $f(x) > -\infty$ on $\Omega$ for all $x \in E$; $f$ is said to be $L^0$--convex if $f(\xi x + (1 - \xi)y) \leq \xi f(x) + (1 - \xi)f(y)$ for all $x, y \in E$ and $\xi \in L^0_+(\mathcal{F})$ such that $0 \leq \xi \leq 1$. Here, we always adopt the convention that $0 \cdot (\pm \infty) = 0$ and $\infty - \infty = \infty$ (namely $+\infty + (-\infty) = +\infty$). It is proved in \cite{FKV09} that an $L^0$--convex function must be local and a proper and local function $f : E \to \bar{L}^0(\mathcal{F})$ is $L^0$--convex iff $epi(f)$ is $L^0$--convex in $E \times L^0(\mathcal{F})$.

\begin{definition}\label{def:2.12}
Let $(E, \mathcal{P})$ be an $RLC$ module over $R$ with base $(\Omega, \mathcal{F}, \mu)$ and $f : E \to \bar{L}^0(\mathcal{F})$ a proper function. $f$ is said to be $\mathcal{T}_{\varepsilon,\lambda}$--lower semicontinuous if $epi(f)$ is closed in $(E, \mathcal{T}_{\varepsilon,\lambda}) \times (L^0(\mathcal{F}), \mathcal{T}_{\varepsilon,\lambda})$; $f$ is said to be $\mathcal{T}_c$--lower semicontinuous if $\{ x \in E\ |\ f(x) \leq r \}$ is $\mathcal{T}_c$--closed for all $r \in L^0(\mathcal{F})$.
\end{definition}

Now, we can state and prove the main result of this section.

\begin{theorem}\label{the:2.13}
Let $(E, \mathcal{P})$ be an $RLC$ module over $R$ with base $(\Omega, \mathcal{F}, \mu)$ such that both $E$ and $\mathcal{P}$ have the countable concatenation property, $f : E \to \bar{L}^0(\mathcal{F})$ a proper and local function, then the following are equivalent:\\
$(1)$ $f$ is $\mathcal{T}_c$--lower semicontinuous;\\
$(2)$ $f$ is $\mathcal{T}_{\varepsilon,\lambda}$--lower semicontinuous;\\
$(3)$ $epi(f)$ is closed in $(E, \mathcal{T}_c) \times (L^0(\mathcal{F}), \mathcal{T}_c)$;\\
$(4)$ $\underline{lim}_{\alpha}f(x_{\alpha}) \geq f(x)$ for any $x \in E$ and any net $\{ x_{\alpha}, \alpha \in \Gamma \}$ convergent to $x$ with respect to $\mathcal{T}_c$, where $\underline{lim}_{\alpha}f(x_{\alpha}) = \bigvee_{\beta \in \Gamma}(\bigwedge_{\alpha \geq \beta}f(x_{\alpha}))$.
\end{theorem}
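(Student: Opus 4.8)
The plan is to prove the four statements equivalent through the cycle $(1)\Rightarrow(4)\Rightarrow(3)\Rightarrow(1)$, which settles the equivalence of $(1)$, $(3)$ and $(4)$, and then to close the loop by establishing $(2)\Leftrightarrow(3)$ separately. Three of these links are routine, and I would dispatch them first. For $(3)\Rightarrow(1)$ I would simply observe that $\{x\in E : f(x)\le r\}$ is the preimage of $epi(f)$ under the $\mathcal{T}_c$-continuous map $x\mapsto(x,r)$, hence $\mathcal{T}_c$-closed. For $(2)\Rightarrow(3)$ I would use that $\mathcal{T}_c$ is finer than $\mathcal{T}_{\varepsilon,\lambda}$ on both $E$ and $L^0(\mathcal{F})$, so $\mathcal{T}_c\times\mathcal{T}_c$ is finer than $\mathcal{T}_{\varepsilon,\lambda}\times\mathcal{T}_{\varepsilon,\lambda}$ and a set closed in the coarser product is closed in the finer one. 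For $(4)\Rightarrow(3)$ I would take a net $(x_\alpha,r_\alpha)\in epi(f)$ with $(x_\alpha,r_\alpha)\to(x,r)$ in $\mathcal{T}_c\times\mathcal{T}_c$; from $f(x_\alpha)\le r_\alpha$ and the monotonicity of $\bigwedge$ and $\bigvee$ one gets $\underline{lim}_\alpha f(x_\alpha)\le\underline{lim}_\alpha r_\alpha$, and since $\mathcal{T}_c$-convergence $r_\alpha\to r$ forces the uniform tail bound $r-\varepsilon\le r_\alpha\le r+\varepsilon$ for every $\varepsilon\in L^0_{++}(\mathcal{F})$, one checks $\underline{lim}_\alpha r_\alpha=r$; combined with $(4)$ this yields $f(x)\le r$.

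The link $(3)\Rightarrow(2)$ is where the hypotheses that $E$ and $\mathcal{P}$ have the countable concatenation property enter decisively. I would first note that $epi(f)$ itself has the countable concatenation property: given $(x_n,r_n)\in epi(f)$ and a partition $\{A_n\}$ of $\Omega$ to $\mathcal{F}$, the concatenations $x=\sum_n\tilde{I}_{A_n}x_n$ and $r=\sum_n\tilde{I}_{A_n}r_n$ exist in $E$ and $L^0(\mathcal{F})$, and locality of $f$ gives $\tilde{I}_{A_n}f(x)=\tilde{I}_{A_n}f(x_n)\le\tilde{I}_{A_n}r$, so $(x,r)\in epi(f)$. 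Viewing $E\times L^0(\mathcal{F})$ as an $RLC$ module whose two canonical topologies are exactly the product topologies, Lemma \ref{lem:2.8} then gives $\overline{epi(f)}_{\varepsilon,\lambda}=\overline{epi(f)}_c=epi(f)$, so $epi(f)$ is closed in $\mathcal{T}_{\varepsilon,\lambda}\times\mathcal{T}_{\varepsilon,\lambda}$, which is $(2)$.

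The heart of the theorem, and the step the introduction flags as previously unproved, is $(1)\Rightarrow(4)$. Assuming $(1)$, I would argue by contradiction: suppose there is a net $\{x_\alpha\}_{\alpha\in\Gamma}$ with $x_\alpha\to x$ in $\mathcal{T}_c$ but $\xi:=\underline{lim}_\alpha f(x_\alpha)=\bigvee_\beta\bigwedge_{\alpha\ge\beta}f(x_\alpha)$ fails $\xi\ge f(x)$, so $\xi<f(x)$ on some $A\in\mathcal{F}$ with $\mu(A)>0$. Fixing $x_1\in dom(f)$ and choosing $r\in L^0(\mathcal{F})$ with $r=f(x_1)$ on $A^c$ and $\xi<r<f(x)$ on $A$, I would build, for each $\beta\in\Gamma$, a point $y_\beta\in\{f\le r\}$ near $\tilde x:=\tilde{I}_A x+\tilde{I}_{A^c}x_1$. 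Since $g_\beta:=\bigwedge_{\alpha\ge\beta}f(x_\alpha)\le\xi<r$ on $A$, and every infimum in $\bar L^0(\mathcal{F})$ is attained along a countable subfamily (pass to the downward-directed family of finite infima), I would pick $\{\alpha^\beta_k\}_k\subset\{\alpha\ge\beta\}$ with $\bigwedge_k f(x_{\alpha^\beta_k})=g_\beta$ and partition $A=\bigcup_k A_k^\beta$ so that $f(x_{\alpha^\beta_k})<r$ on $A_k^\beta$. Setting $y_\beta=\sum_k\tilde{I}_{A_k^\beta}x_{\alpha^\beta_k}+\tilde{I}_{A^c}x_1$ (which exists since $E$ has the countable concatenation property), locality of $f$ gives $f(y_\beta)<r$ on $A$ and $f(y_\beta)=f(x_1)=r$ on $A^c$, so $y_\beta\in\{f\le r\}$.

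The decisive point, and the step I expect to be the \emph{main obstacle}, is to verify $y_\beta\to\tilde x$ in $\mathcal{T}_c$, because $y_\beta$ splices together elements $x_{\alpha^\beta_k}$ drawn from infinitely many different stages of the net. Here the advantage of $\mathcal{T}_c$ over $\mathcal{T}_{\varepsilon,\lambda}$ is essential: $\mathcal{T}_c$-convergence $x_\alpha\to x$ means that for every $Q\in\mathcal{P}(F)$ and $\varepsilon\in L^0_{++}(\mathcal{F})$ there is $\beta_0$ with $\|x_\alpha-x\|_Q\le\varepsilon$ simultaneously for all $\alpha\ge\beta_0$, a single uniform $L^0$-bound on the whole tail. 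Since every index satisfies $\alpha_k^\beta\ge\beta$, for $\beta\ge\beta_0$ the locality of $\|\cdot\|_Q$ yields $\|y_\beta-\tilde x\|_Q=\sum_k\tilde{I}_{A_k^\beta}\|x_{\alpha^\beta_k}-x\|_Q\le\varepsilon$ on $A$ and $=0$ on $A^c$, so $\|y_\beta-\tilde x\|_Q\le\varepsilon$, giving $y_\beta\to\tilde x$ in $\mathcal{T}_c$. As $\{f\le r\}$ is $\mathcal{T}_c$-closed by $(1)$ and $y_\beta\in\{f\le r\}$, the limit $\tilde x\in\{f\le r\}$; but locality gives $\tilde{I}_A f(\tilde x)=\tilde{I}_A f(x)$, whence $f(x)\le r$ on $A$, contradicting $r<f(x)$ on $A$. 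This contradiction establishes $(4)$. I expect this uniform-tail/concatenation interplay to be the only genuinely delicate point; the remaining care is routine, namely the measurable selection of $r$ (splitting $A$ according to where $\xi$ is finite or equal to $-\infty$ and where $f(x)$ is finite or $+\infty$) and the justification of the countable attainment of the infima.
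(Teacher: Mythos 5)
Your proof is correct. The routine links coincide with the paper's: $(4)\Rightarrow(3)\Rightarrow(1)$ as you describe, and $(2)\Leftrightarrow(3)$ via the countable concatenation property of $epi(f)$ and Lemma \ref{lem:2.8}. The genuine divergence is in the hard implication $(1)\Rightarrow(4)$. The paper fixes $r$ with $f(x)>r$ on $\Omega$, forms the class $\mathcal{A}_r$ of sets on which the sublevel set is reachable, shows its essential supremum $A_r$ is itself attained by a splicing argument, and then invokes the separation result (Lemma \ref{lem:2.9} via Remark \ref{rem:2.11}, which is where the countable concatenation property of $\mathcal{P}$ enters) to produce a $\mathcal{T}_c$--neighborhood $U$ with $\tilde{I}_A(x+U)\bigcap\tilde{I}_A(V+U)=\emptyset$ for all $A\subset A_r$ with $\mu(A)>0$; the tail of the net then avoids $V$ stratum by stratum and $f(x_\beta)>r$ on $\Omega$ follows. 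You instead argue by contradiction and, for each $\beta$, build a point $y_\beta$ \emph{inside} the sublevel set by countably splicing tail elements $x_{\alpha_k^\beta}$ with $\alpha_k^\beta\ge\beta$ over a partition of the bad set $A$, then verify $y_\beta\to\tilde{x}$ directly from the uniform tail bound $\|x_\alpha-x\|_Q\le\varepsilon$ that $\mathcal{T}_c$--convergence supplies on the whole tail; $\mathcal{T}_c$--closedness of $\{x\in E\ |\ f(x)\le r\}$ then yields $f(x)\le r$ on $A$, a contradiction. The two arguments are dual in spirit: the paper separates $x$ from the sublevel set by a neighborhood, while you approach $x$ (modified off $A$ by a point of $dom(f)$) from within the sublevel set. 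Your route is more elementary, since it bypasses the separation machinery of Lemma \ref{lem:2.9} entirely and uses only the countable concatenation property of $E$ in this step, not that of $\mathcal{P}$; what the paper's route buys is that it proves the inequality $\underline{lim}_{\alpha}f(x_\alpha)\ge r$ directly for every admissible $r$ and exhibits explicitly the stratification (the set $A_r$ versus $A_r^c$) governing where the sublevel set is reachable. The delicate points you flag --- countable attainment of the infima $g_\beta$, the measurable choice of $r$ on the strata where $\xi=-\infty$ or $f(x)=+\infty$, and the survival of a single $L^0$--bound under countable concatenation --- are all handled correctly, and both proofs ultimately rest on the same two ingredients: splicing via the countable concatenation property of $E$, and the fact that a $\mathcal{T}_c$--tail is controlled by one $L^0$--bound valid on all of $\Omega$.
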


\begin{proof}
$(4)$ $\Rightarrow$ (3) $\Rightarrow$ (1) is clear. By definition and Lemma \ref{lem:2.8}, (2) $\Leftrightarrow$ (3) is also clear since $epi(f)$ has the countable concatenation property. The most difficult part of the proof is (1) $\Rightarrow$ (4) as follows.

For any fixed $x \in E$, let $\{ x_{\alpha}, \alpha \in \Gamma \}$ converge to $x$ with respect to $\mathcal{T}_c$. Further, let $r$ be any element of $L^0(\mathcal{F})$ such that $f(x) > r$ on $\Omega$.

Denote $\mathcal{A}_r = \{ A \in \mathcal{F}\ |\ \mu(A) > 0$ and there exists $z \in E$ such that $\tilde{I}_{A}f(z) \leq \tilde{I}_{A}r \}$. If $\mathcal{A}_r$ is empty, then for all $z \in E$ we always $f(z) > r$ on $\Omega$, which clearly means $\underline{lim}_{\alpha}f(x_{\alpha}) \geq r$. We will consider the case in which $\mathcal{A}_r$ is not empty as follows.

If $\mathcal{A}_r$ is not empty, then it must be directed upwards : in fact, let $A$ and $B \in \mathcal{A}_r$, then there exist $z_1$ and $z_2 \in E$ such that $\tilde{I}_{A}f(z_1) \leq \tilde{I}_{A} r$ and $\tilde{I}_{B}f(z_2) \leq \tilde{I}_{B}r$, according to the local property of $f$ one can have that
\begin{align}
  \tilde{I}_{A \bigcup B}f(\tilde{I}_{A}z_1 + \tilde{I}_{B \backslash A}z_2)
 =& (\tilde{I}_{A} + \tilde{I}_{B \backslash A})f(\tilde{I}_{A}z_1 + \tilde{I}_{B \backslash A}z_2) \notag\\
 =& \tilde{I}_{A}f(z_1) + \tilde{I}_{B \backslash A}f(z_2) \notag \\
\leq& \tilde{I}_{A \bigcup B} r \notag
\end{align}

Denote $A_r = esssup \mathcal{A}_r$ (namely, the essential supremum of $\mathcal{A}_r$, see, for instance, \cite{Guo10} for the notion of essential supremum), we will prove $A_r \in \mathcal{A}_r$ as follows. Since there exists a sequence $\{ A_n\ |\ n \in N \}$ in $\mathcal{A}_r$ such that $A_n \uparrow A_r$, correspondingly, there exists a sequence $\{ z_n\ |\ n \in N \}$ in $E$ such that $\tilde{I}_{A_n}f(z_n) \leq \tilde{I}_{A_n}r$ for each $n \in N$. Since $E$ has the countable concatenation property, there exists $z \in E$ such that $z = \sum^{\infty}_{n=1}\tilde{I}_{A_{n} \backslash A_{n-1}}z_n + \tilde{I}_{A^{c}_{r}}0$, where $A_0 = \emptyset$, further, we have that
\begin{align} \label{gongshi:xing}
   \tilde{I}_{A_r}f(z)
   =& (\sum^{\infty}_{n=1}\tilde{I}_{A_{n} \backslash A_{n-1}})f(z) \notag\\
   =& \sum^{\infty}_{n=1}\tilde{I}_{A_{n} \backslash A_{n-1}} f(\tilde{I}_{A_{n} \backslash A_{n-1}} z) \notag\\
   =& \sum^{\infty}_{n=1}\tilde{I}_{A_{n} \backslash A_{n-1}} f(z_n) (by\ the\ local\ property\ of\ f ) \notag\\
   \leq& \tilde{I}_{A_r}r \tag{$\ast$}
\end{align}
This shows that $A_r \in \mathcal{A}_r$.

Let $V = \{ z \in E\ |\ \tilde{I}_{A_r}f(z) \leq \tilde{I}_{A_r}r \}$. Since $dom(f) \neq \emptyset$, let $u$ be any fixed element of $dom(f)$, then we have that $V = \{ z \in E\ |\ \tilde{I}_{A_r}f(z) + \tilde{I}_{A^c_r}f(u) \leq \tilde{I}_{A_r} r + \tilde{I}_{A^c_r}f(u) \}$ = $\{ z \in E\ |\ f(\tilde{I}_{A_r}z + \tilde{I}_{A^c_r}u) \leq \tilde{I}_{A_r} r + \tilde{I}_{A^c_r}f(u) \}$ = $\{ z \in E\ |\ \tilde{I}_{A_r}z \in V(f, \xi) - \tilde{I}_{A^c_r}u \}$, where $V(f, \xi) = \{ x \in E\ |\ f(x) \leq \xi \}$ and $\xi = \tilde{I}_{A_r}r + \tilde{I}_{A^c_r}f(u)$. Since $f$ is $\mathcal{T}_c$--lower semicontinuous, $V(f, \xi)$ is $\mathcal{T}_c$--closed, which obviously implies that $V$ is also $\mathcal{T}_c$--closed.

Of course, $V$ also has the countable concatenation property since $V(f, \xi)$ possesses the property. We will verify that $\tilde{I}_{A}\{x\} \bigcap \tilde{I}_{A}V = \emptyset$ for all $A \in \mathcal{F}$ with $A \subset A_r$ and $\mu(A) > 0$ as follows.

In fact, if there exists $A \in \mathcal{F}$ with $A \subset A_r$ and $\mu(A) > 0$ such that $\tilde{I}_{A}x = \tilde{I}_{A}\cdot z_A$ for some $z_A \in V$, then by the local property of $f$, $\tilde{I}_{A}f(x) = \tilde{I}_{A}f(\tilde{I}_{A}x) = \tilde{I}_{A}f(\tilde{I}_{A}z_A) = \tilde{I}_{A}f(z_A) = \tilde{I}_{A} \cdot \tilde{I}_{A_r}f(z_A) \leq \tilde{I}_{A} \cdot \tilde{I}_{A_r}r = \tilde{I}_{A}r$, which contradicts the hypothesis on $r$.

Thus, by Remark \ref{rem:2.11} there exists some $\mathcal{T}_c$--neighborhood $U$ of $\theta$ such that $\tilde{I}_{A}(x + U) \bigcap \tilde{I}_{A}(V + U) = \emptyset$ for all $A \in \mathcal{F}$ with $A \subset A_r$ and $\mu(A) > 0$. Since $\{ x_{\alpha}, \alpha \in \Gamma \}$ converges to $x$, there is some $\alpha_{0} \in \Gamma$ such that $x_{\beta} \in x + U$ for all $\beta \geq \alpha_{0}$. Then, for all $\beta \geq \alpha_{0}$ and all $A \in \mathcal{F}$ with $A \subset A_r$ and $\mu(A) > 0$, we must have that $\tilde{I}_{A}x_{\beta} \notin \tilde{I}_{A}V$, which means that $\tilde{I}_{A_r}f(x_{\beta}) > \tilde{I}_{A_r}r$ on $A_r$. In fact, if there are some $\beta \geq \alpha_{0}$ and some $A \in \mathcal{F}$ with $A \subset A_r$ and $\mu(A) > 0$ such that $\tilde{I}_{A} \cdot \tilde{I}_{A_r}f(x_\beta) \leq \tilde{I}_{A} \cdot \tilde{I}_{A_r}r$, since by (\ref{gongshi:xing}) we also have that $\tilde{I}_{A_r \backslash A}f(z) \leq \tilde{I}_{A_r \backslash A}r$, where $z$ is as given in (\ref{gongshi:xing}), to sum up, we can get that $\tilde{I}_{A_r}f(\tilde{I}_{A}x_\beta + \tilde{I}_{A_r \backslash A}z) \leq \tilde{I}_{A_r}r$, namely $\tilde{I}_{A}x_\beta + \tilde{I}_{A_r \backslash A}z \in V$. Then $\tilde{I}_{A}x_\beta = \tilde{I}_{A}(\tilde{I}_{A}x_\beta + \tilde{I}_{A_r \backslash A}z) \in \tilde{I}_{A}V$, which contradicts the fact that $\tilde{I}_{A}x_\beta \notin \tilde{I}_{A}V$.

Finally, by the definition of $\mathcal{A}_r$, $f(y) > r$ on $A^c_r$ for all $y \in E$, and hence $f(x_\beta) = \tilde{I}_{A_r}f(x_\beta) + \tilde{I}_{A^c_r}f(x_\beta) > \tilde{I}_{A_r}r + \tilde{I}_{A^c_r}r = r$ on $\Omega$ for all $\beta \geq \alpha_0$, which further implies that $\underline{lim}_{\alpha} f(x_\alpha) \geq r$.

Up to now, we have proved that $\underline{lim}_{\alpha} f(x_\alpha) \geq r$ in either case in which $\mathcal{A}_r$ is empty or nonempty. Since $r$ is an arbitrarily chosen element such that $f(x) > r$ on $\Omega$, we have $\underline{lim}_{\alpha} f(x_\alpha) \geq f(x)$.

This completes the proof
\end{proof}

\begin{remark}\label{lem:2.14}
Let $(E, \mathcal{P})$ be an $RLC$ module over $R$ with base $(\Omega, \mathcal{F}, \mu)$ and $f : E \to \bar{L}^0(\mathcal{F})$ a proper and local function. Let us observe the following three statements:\\
$(1)^\prime$ $\{ x \in E\ |\ f(x) \leq r \}$ is $\mathcal{T}_{\varepsilon,\lambda}$--closed for all $r \in L^0(\mathcal{F})$;\\
$(2)^\prime$ $epi(f)$ is closed in $(E, \mathcal{T}_{\varepsilon,\lambda}) \times (L^0(\mathcal{F}), \mathcal{T}_{\varepsilon,\lambda})$;\\
$(3)^\prime$ $\underline{lim}_{\alpha} f(x_\alpha) \geq f(x)$ for all $x \in E$ and all net $\{ x_\alpha, \alpha \in \Gamma \}$ convergent to $x$ with respect to $\mathcal{T}_{\varepsilon,\lambda}$.\\
Generally, one always has $(3)^\prime$ $\Rightarrow$ $(2)^\prime$ $\Rightarrow$ $(1)^\prime$. Although under the assumption of Theorem \ref{the:2.13} one also has $(1)^\prime$ $\Leftrightarrow$ $(2)^\prime$, $(1)^\prime$ or $(2)^\prime$ did not imply $(3)^\prime$, in fact, we can construct examples showing that $(3)^\prime$ is not necessarily true even for a $\mathcal{T}_{\varepsilon,\lambda}$--continuous function. Generally, we do not know if $(1)^\prime$ implies $(2)^\prime$, either, but $(2)^\prime$ as the definition of a $\mathcal{T}_{\varepsilon,\lambda}$--lower semicontinuous function has met the needs of the study of $\mathcal{T}_{\varepsilon,\lambda}$--lower semicontinuous functions, see, for instance, \cite{GY12,GZZ15a}. Whereas, for the case of the locally $L^0$--convex topology $\mathcal{T}_c$, since we are often forced to assume that $RLC$ module $(E, \mathcal{P})$ in question satisfy the hypothesis of Theorem \ref{the:2.13}, we adopt the general definition of a $\mathcal{T}_c$--lower semicontinuous function as Definition \ref{def:2.12}. But, in \cite{GZZ15a} we used (3) of Theorem \ref{the:2.13} as the notion of a $\mathcal{T}_c$--lower semicontinuous function since we then did not know if (1) of Theorem \ref{the:2.13} indeed implies (4), and hence also (3), of Theorem \ref{the:2.13}.
\end{remark}

Now, owing to Theorem \ref{the:2.13} we can make perfect the Fenchel--Moreau duality theorem for a $\mathcal{T}_c$--lower semicontinuous function. From now on, we always use Definition \ref{def:2.12} as the notion of a $\mathcal{T}_{\varepsilon,\lambda}$-- or $\mathcal{T}_c$--lower semicontinuous function unless otherwise stated.

Let $(E, \mathcal{P})$ be an $RLC$ module over $R$ with base $(\Omega, \mathcal{F}, \mu)$ and $f : E \to \bar{L}^0(\mathcal{F})$. Define $f^{\ast}_{\varepsilon,\lambda} : E^{\ast}_{\varepsilon,\lambda} \to \bar{L}^0(\mathcal{F}),\ f^{\ast \ast}_{\varepsilon,\lambda} : E \to \bar{L}^0(\mathcal{F}),\ f^{\ast}_c : E^{\ast}_c \to \bar{L}^0(\mathcal{F})$ and $f^{\ast \ast}_c : E \to \bar{L}^0(\mathcal{F})$ as follows:\\
$f^{\ast}_{\varepsilon,\lambda}(g) = \bigvee \{ g(x) - f(x)\ |\ x \in E \}$ for all $g \in E^{\ast}_{\varepsilon,\lambda}$;\\
$f^{\ast \ast}_{\varepsilon,\lambda}(x) = \bigvee \{ g(x)- f^{\ast}_{\varepsilon,\lambda}(g)\ |\ g \in E^{\ast}_{\varepsilon,\lambda} \}$ for all $x \in E$;\\
$f^{\ast}_{c}(g) = \bigvee \{ g(x) - f(x)\ |\ x \in E \}$ for all $g \in E^{\ast}_{c}$;\\
$f^{\ast \ast}_{c}(x) = \bigvee \{ g(x) - f^{\ast}_{c}(g)\ |\ g \in E^{\ast}_{c} \}$ for all $x \in E$.

In \cite{GZZ15a}, Guo, et.al proved Proposition \ref{pro:2.15} below:
\begin{proposition}\label{pro:2.15}
\cite{GZZ15a}. Let $(E, \mathcal{P})$ be an $RLC$ module over $R$ with base $(\Omega, \mathcal{F}, \mu)$ and $f : E \to \bar{L}^0(\mathcal{F})$ a proper $\mathcal{T}_{\varepsilon,\lambda}$--lower semicontinuous $L^0$--convex function. Then $f^{\ast \ast}_{\varepsilon,\lambda} = f$.
\end{proposition}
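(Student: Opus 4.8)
The plan is to establish the nontrivial inequality $f^{\ast\ast}_{\varepsilon,\lambda} \geq f$, since the reverse inequality is immediate: for every $g \in E^{\ast}_{\varepsilon,\lambda}$ and every $x \in E$ one has $f^{\ast}_{\varepsilon,\lambda}(g) \geq g(x) - f(x)$ directly from the definition of $f^{\ast}_{\varepsilon,\lambda}$, hence $g(x) - f^{\ast}_{\varepsilon,\lambda}(g) \leq f(x)$, and taking $\bigvee$ over $g$ gives $f^{\ast\ast}_{\varepsilon,\lambda}(x) \leq f(x)$. This is the $L^0$--valued Fenchel--Young inequality and needs no hypothesis on $f$.

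For the hard inequality I would fix $x_0 \in E$ and, observing that the directed family $\{ r \in L^0(\mathcal{F}) : r < f(x_0) \text{ on } \Omega \}$ has supremum $f(x_0)$ in $\bar{L}^0(\mathcal{F})$, reduce the claim to showing $f^{\ast\ast}_{\varepsilon,\lambda}(x_0) \geq r$ for every $r \in L^0(\mathcal{F})$ with $f(x_0) > r$ on $\Omega$. Strict inequality on all of $\Omega$ is exactly what guarantees, via the locality of $f$, that $\tilde{I}_A\{(x_0,r)\} \bigcap \tilde{I}_A\, epi(f) = \emptyset$ for every $A \in \mathcal{F}$ with $\mu(A) > 0$: were $\tilde{I}_A x_0 = \tilde{I}_A z$ and $\tilde{I}_A r = \tilde{I}_A \rho$ with $(z,\rho) \in epi(f)$, then $\tilde{I}_A f(x_0) = \tilde{I}_A f(z) \leq \tilde{I}_A \rho = \tilde{I}_A r$, a contradiction. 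Since $f$ is $\mathcal{T}_{\varepsilon,\lambda}$--lower semicontinuous, $epi(f)$ is $\mathcal{T}_{\varepsilon,\lambda}$--closed and $L^0$--convex in the product module $E \times L^0(\mathcal{F})$, so I am in position to apply the separation theorem for a point and an $L^0$--convex closed set (the engine behind Lemma \ref{lem:2.9} and Remark \ref{rem:2.11}; in the general case one first passes to $\mathcal{P}_{cc}$ so that, by Remark \ref{rem:2.7}, the $\mathcal{T}_c$--continuous functionals are exactly the elements of $E^{\ast}_{\varepsilon,\lambda}$, and uses Lemma \ref{lem:2.8} to identify the $\mathcal{T}_{\varepsilon,\lambda}$-- and $\mathcal{T}_c$--closures). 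This produces a continuous module homomorphism $T$ on $E \times L^0(\mathcal{F})$, necessarily of the form $T(x,\rho) = g(x) + t\rho$ with $g \in E^{\ast}_{\varepsilon,\lambda}$ and $t \in L^0(\mathcal{F})$, together with a separating $c \in L^0(\mathcal{F})$.

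The next step is to convert the separation into an affine minorant. Letting $\rho$ increase in $g(x)+t\rho \geq c$ on $epi(f)$ forces $t \geq 0$. On $B^c := \{t > 0\}$ I divide by $t$: from $g(x) + t f(x) \geq c$ one sees that $x \mapsto (c - g(x))/t$ minorizes $f$, whence $f^{\ast}_{\varepsilon,\lambda}(-g/t) \leq -c/t$ and, using $g(x_0)+tr < c$, that $f^{\ast\ast}_{\varepsilon,\lambda}(x_0) \geq (c-g(x_0))/t > r$ on $B^c$. On $B := \{t = 0\}$ the hyperplane is vertical and yields only $g(x) \geq c$ on $dom(f)$ with $g(x_0) < c$; here I would first secure a global affine minorant $h_0 = g_0 - c_0 \leq f$ (itself obtained from a non--vertical separation at a point of $dom(f)$, available because $f$ is proper, $L^0$--convex and lower semicontinuous) and then form, for a large enough $L^0$--scalar $\lambda \geq 0$, the functional $g_\lambda = g_0 - \lambda g$ with constant $c_\lambda = c_0 - \lambda c$. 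Since $g(x) - c \geq 0$ on $dom(f)$ this remains a minorant of $f$, while $g_\lambda(x_0) - c_\lambda = h_0(x_0) + \lambda(c - g(x_0))$ tends to $+\infty$ on $B$, so for $\lambda$ large $f^{\ast\ast}_{\varepsilon,\lambda}(x_0) \geq g_\lambda(x_0) - c_\lambda > r$ on $B$. Gluing the two functionals over the partition $\{B, B^c\}$ — using that $E^{\ast}_{\varepsilon,\lambda}$ has the countable concatenation property — gives $f^{\ast\ast}_{\varepsilon,\lambda}(x_0) > r$ on $\Omega$, and taking $\bigvee$ over admissible $r$ yields $f^{\ast\ast}_{\varepsilon,\lambda}(x_0) \geq f(x_0)$.

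I expect the main obstacle to be precisely the interaction of the partial order with the vertical--hyperplane case. Unlike the scalar theory, the set $\{t > 0\}$ on which the hyperplane genuinely supports $epi(f)$ is only a measurable piece of $\Omega$, so the affine minorant must be assembled by countable concatenation of pieces living on different $\tilde{I}_A E$, and the perturbing scalar $\lambda$ must be chosen as a measurable element of $L^0_{+}(\mathcal{F})$ rather than a single real. Keeping the local separations, the divisions by $t$, and the gluing into a single element of $E^{\ast}_{\varepsilon,\lambda}$ mutually compatible — while only assuming $\mathcal{T}_{\varepsilon,\lambda}$--lower semicontinuity and not countable concatenation of $E$ itself — is the delicate heart of the argument; the separation theorem for a point and an $L^0$--convex closed set does the real analytic work and is the tool I would lean on most.
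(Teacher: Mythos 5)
Your overall architecture is exactly the paper's: Proposition \ref{pro:2.15} is proved via Lemma \ref{lem:2.16}, whose proof separates the point $(x_0,\beta)$ (your $(x_0,r)$) from $epi(f)$ in $E\times L^0(\mathcal{F})$, reads off the ``slope'' coefficient $g_2(1)$ (your $t$), shows it is of one sign by letting the second coordinate grow, divides by it on the stratum where it does not vanish, and on the vertical stratum $(g_2(1)=0)$ repairs the minorant by adding a suitable $L^0$--multiple of the separating functional (the paper's $\tilde h(x)=\delta-g_1(x)$, which is nonnegative on $dom(f)$ and strictly negative at $x_0$ there, playing the role of your $g-c$) to an affine minorant obtained at a point of $dom(f)$, before gluing over the measurable partition. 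Your reduction to $r<f(x_0)$ on $\Omega$, the Fenchel--Young half, the sign analysis of $t$, and the large-$\lambda$ perturbation on $B=(t=0)$ all match the paper's Cases 1 and 2.

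The one concrete misstep is your choice of separation theorem. You propose to reach the separating functional through the locally $L^0$--convex machinery --- Lemma \ref{lem:2.9} and Remark \ref{rem:2.11}, passing to $\mathcal{P}_{cc}$ and using Lemma \ref{lem:2.8} to identify the $\mathcal{T}_{\varepsilon,\lambda}$-- and $\mathcal{T}_c$--closures. But Lemma \ref{lem:2.8} and Lemma \ref{lem:2.9} both require the set in question ($epi(f)$ here) to have the countable concatenation property, and Proposition \ref{pro:2.15} assumes nothing of the sort about $E$ or $\mathcal{P}$; without it the identification of the two closures fails and that route collapses. The tool you actually need is Proposition \ref{pro:2.17}, the separation theorem stated directly for a point and a nonempty $\mathcal{T}_{\varepsilon,\lambda}$--closed $L^0$--convex set, which carries no concatenation hypothesis and delivers a functional already in $(E\times L^0(\mathcal{F}))^{\ast}_{\varepsilon,\lambda}=E^{\ast}_{\varepsilon,\lambda}\times L^0(\mathcal{F})$ with strict separation on $\Omega$ (using that $\tilde I_A\{(x_0,r)\}\bigcap\tilde I_A\,epi(f)=\emptyset$ for all $A$ of positive measure, which you correctly verified from locality). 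With that substitution the rest of your argument goes through; also note that your final gluing is over the two-element partition $\{B,B^c\}$, so it needs only the $L^0$--module structure of $E^{\ast}_{\varepsilon,\lambda}$, not its countable concatenation property.
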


In the proof of Proposition \ref{pro:2.15} the paper \cite{GZZ15a} used a technique, namely Lemma \ref{lem:2.16} below, however in \cite{GZZ15a} some details were omitted, we will give a detailed proof of Lemma \ref{lem:2.16} since those omitted details will be used in this paper.

\begin{lemma}\label{lem:2.16}
Let $(E, \mathcal{P})$ be an $RLC$ module over $R$ with base $(\Omega, \mathcal{F}, \mu)$ and $f : E \to \bar{L}^0(\mathcal{F})$ a proper $\mathcal{T}_{\varepsilon,\lambda}$--lower semicontinuous $L^0$--convex function. If $x_0 \in E$ and $\beta \in L^0(\mathcal{F})$ are such that $f(x_0) > \beta$ on $\Omega$, then there is an $\mathcal{T}_{\varepsilon,\lambda}$--continuous affine function $h = g + \alpha$ $($where $g \in E^{\ast}_{\varepsilon,\lambda}$ and $\alpha \in L^0(\mathcal{F}))$ such that $h(x_0) = g(x_0) + \alpha = \beta$ and $h(x) \leq f(x)$ for all $x \in E$.
\end{lemma}

Before the proof of Lemma \ref{lem:2.16} we first give the two separation propositions, namely Proposition \ref{pro:2.17} and Corollary \ref{cor:2.18} below since the proof of Lemma \ref{lem:2.16} is based on Proposition \ref{pro:2.17}.

Let $(E, \mathcal{P})$ be an $RLC$ module over $K$ with base $(\Omega, \mathcal{F}, \mu)$, $x \in E$ and $M \subset E$. Let $d_{Q}(x, M)= \bigwedge \{ \|x - y\|_{Q}\ |\ y \in M \}$ for all $Q \in \mathcal{P}(F)$, and $d(x, M) = \bigvee_{Q \in \mathcal{P}(F)} d_{Q}(x, M)$. For any representative $d^{0}(x, M)$ of $d(x, M)$, we always use $(d(x, M) > 0)$ for $\{ \omega \in \Omega\ |\ d^{0}(x, M)(\omega) > 0 \}$. Just as pointed out in \cite{GZZ15a}, if $M$ is a $\mathcal{T}_{\varepsilon,\lambda}$--closed $L^0$--convex set and $x \notin M$ then $\tilde{I}_{A}\{x\} \cap \tilde{I}_{A}M = \emptyset$ for all $A \in \mathcal{F}$ with $A \subset (d(x, M) > 0)$ and $\mu(A) > 0$, in particular $\tilde{I}_{A}\{x\} \bigcap \tilde{I}_{A}M = \emptyset$ for all $A \in \mathcal{F}$ with $\mu(A) > 0$ iff $\mu(\Omega \backslash (d(x, M) > 0)) = 0$ (namely, $d(x ,M) > 0$ on $\Omega$).

\begin{proposition}\label{pro:2.17}
\cite{GZZ15a}. Let $(E, \mathcal{P})$ be an $RLC$ module over $K$ with base $(\Omega, \mathcal{F}, \mu)$, $x \in E$ and $M \subset E$ a nonempty $\mathcal{T}_{\varepsilon,\lambda}$--closed $L^0$--convex subset such that $x \notin M$. Then there exists $f \in E^{\ast}_{\varepsilon,\lambda}$ such that the following two conditions are satisfied:\\
$(1)$ $Ref(x) > \bigvee \{ Ref(y)\ |\ y \in M \}$ on $(d(x ,M) > 0)$;\\
$(2)$ $Ref(x) = \bigvee \{ Ref(y)\ |\ y \in M \}$ on $(d(x ,M) > 0)^c$;\\
where $Ref : E \to L^0(\mathcal{F})$ is defined by $Ref(z) = Re(f(z))$ $($namely the real part of $f(z))$ for all $z \in E$.

In addition, if $\tilde{I}_{A}\{x\} \bigcap \tilde{I}_{A}M = \emptyset$ for all $A \in \mathcal{F}$ with $\mu(A) > 0$, then (1) and (2) above can be simply stated as:\\
$(3)$ $Ref(x) > \bigvee \{ Ref(y)\ |\ y \in M \}$ on $\Omega$.
\end{proposition}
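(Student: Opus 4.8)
The plan is to reduce everything to the \emph{genuine separation} case — where $x$ is $\mathcal{T}_{\varepsilon,\lambda}$-separated from $M$ on all of $\Omega$ — and to produce the separating functional there by a Minkowski-gauge construction together with the random Hahn--Banach theorem, carried out inside the locally $L^0$-convex topology $\mathcal{T}_c$ rather than $\mathcal{T}_{\varepsilon,\lambda}$. The reason for passing to $\mathcal{T}_c$ is structural: the basic $\mathcal{T}_{\varepsilon,\lambda}$-neighborhoods $N_{\theta}(Q,\varepsilon,\lambda)$ are not $L^0$-convex, so their gauges are not $L^0$-seminorms, whereas the $\mathcal{T}_c$-neighborhoods $N_{\theta}(Q,\varepsilon)$ with $\varepsilon\in L^0_{++}(\mathcal{F})$ are $L^0$-convex, $L^0$-absorbent and $L^0$-balanced and thus carry $L^0$-sublinear gauges. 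Whatever we build with $\mathcal{T}_c$ will therefore lie in $E^{\ast}_c\subset E^{\ast}_{\varepsilon,\lambda}$, as required.

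First I would make three harmless reductions. (i) It suffices to treat $K=R$: for $K=C$ one produces a $\mathcal{T}_c$-continuous $L^0(\mathcal{F},R)$-linear functional $u$ with the required separation of real parts and then sets $f(z)=u(z)-i\,u(iz)$, an $L^0(\mathcal{F},C)$-module homomorphism with $Ref=u$. (ii) One may assume $\mathcal{P}$ has the countable concatenation property, since replacing $\mathcal{P}$ by $\mathcal{P}_{cc}$ leaves $\mathcal{T}_{\varepsilon,\lambda}$, the set $(d(x,M)>0)$ and $E^{\ast}_{\varepsilon,\lambda}$ unchanged (Remark~\ref{rem:2.7}); moreover now $E^{\ast}_c=E^{\ast}_{\varepsilon,\lambda}$. (iii) An $L^0$-convex $\mathcal{T}_{\varepsilon,\lambda}$-closed set automatically has the countable concatenation property: the finite $\tilde{I}$-concatenations of a sequence in $M$ are finite $L^0$-convex combinations of its terms, hence lie in $M$, and they converge in $\mathcal{T}_{\varepsilon,\lambda}$ to the full concatenation, which is then in $M$ by closedness; consequently $M$ is also $\mathcal{T}_c$-closed by Lemma~\ref{lem:2.8}. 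These reductions place us in the exact hypotheses of Lemma~\ref{lem:2.9}.

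The heart of the proof is the genuine case, $\tilde{I}_A\{x\}\cap\tilde{I}_A M=\emptyset$ for all $A\in\mathcal{F}$ with $\mu(A)>0$, i.e. $d(x,M)>0$ on $\Omega$; this will yield conclusion (3). Here I would invoke Lemma~\ref{lem:2.9} to get an $L^0$-convex, $L^0$-absorbent, $L^0$-balanced $\mathcal{T}_c$-neighborhood $U$ of $\theta$ with $\tilde{I}_A(x+U)\cap\tilde{I}_A(M+U)=\emptyset$ for all such $A$; equivalently, $U$ is disjoint (in the stratified sense) from the $L^0$-convex set $D:=(M+U)-x$, which has nonempty $\mathcal{T}_c$-interior since $m_0-x+U\subset D$ for any fixed $m_0\in M$. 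Taking the Minkowski gauge $p$ of the translate $D':=D+(x-m_0)$, which contains $U$ and hence has $\theta$ as an interior point, $p$ is an $L^0$-sublinear functional, the separating vector $w_0=x-m_0$ satisfies $p(w_0)\geq 1$ on $\Omega$ (because $\theta\notin D$ forces $w_0\notin D'$), and defining $u$ on the $L^0$-line $L^0(\mathcal{F})\,w_0$ by $u(\xi w_0)=\xi\,p(w_0)$ and extending it by the random Hahn--Banach theorem to a $p$-dominated $L^0$-linear functional on $E$ yields a $\mathcal{T}_c$-continuous element of $E^{\ast}_c$. The crucial point is that the \emph{whole} neighborhood $U$, not merely $\theta$, avoids $D$, so the separation comes with a uniform $L^0_{++}(\mathcal{F})$-gap $\delta$: one obtains $u(x)-u(y)\geq\delta>0$ for every $y\in M$, whence $u(x)>\bigvee\{u(y)\mid y\in M\}$ on $\Omega$ after passing to the supremum. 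This establishes (3).

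Finally, the general case follows by localization. Put $A_0=(d(x,M)>0)$ and pass to the $RLC$ module $(E_{A_0},\mathcal{P}_{A_0})$ over $(A_0,A_0\cap\mathcal{F},\mu_{A_0})$ as in Remark~\ref{rem:2.11}; there the observation recalled just before the proposition guarantees that $\tilde{I}_A\{\tilde{I}_{A_0}x\}\cap\tilde{I}_A(\tilde{I}_{A_0}M)=\emptyset$ for all $A\subset A_0$ with $\mu(A)>0$, so the genuine hypothesis holds on $E_{A_0}$. Applying the case just proved gives $u_0\in (E_{A_0})^{\ast}_c$ with strict separation on all of $A_0$; pulling it back by $z\mapsto u_0(\tilde{I}_{A_0}z)$ and padding by $0$ on $A_0^c$ — legitimate because $E^{\ast}_{\varepsilon,\lambda}$ has the countable concatenation property — produces $f\in E^{\ast}_{\varepsilon,\lambda}$. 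By construction $Ref=0$ on $A_0^c$, so $Ref(x)=0=\bigvee\{Ref(y)\mid y\in M\}$ there, which is (2), while the strict inequality on $A_0$ gives (1); when the extra hypothesis holds we have $A_0=\Omega$ and recover (3). I expect the main obstacle to be the third step: realizing the classical gauge/Hahn--Banach separation $L^0$-linearly while (a) keeping the extended functional a genuine module homomorphism that is $\mathcal{T}_c$-continuous, and (b) upgrading the separation from $\geq$ to a strict inequality that survives the supremum $\bigvee_{y\in M}$ — precisely where the uniform stratified gap supplied by Lemma~\ref{lem:2.9} is indispensable, as the counterexample in Remark~\ref{rem:2.10} shows one cannot dispense with its hypotheses.
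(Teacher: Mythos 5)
First, note that the paper does not actually prove Proposition \ref{pro:2.17}: it is quoted from \cite{GZZ15a}, and the proof there rests on the basic strict separation theorem of \cite{GXC09}, which is carried out \emph{directly} in the $(\varepsilon,\lambda)$--topology, followed by exactly the localization you describe (restrict to $E_{A_0}$ with $A_0=(d(x,M)>0)$, separate there, pad by zero on $A_0^c$ to obtain (2)). So your overall architecture --- complexification, localization, zero--padding, and the observation that the uniform neighborhood gap is what lets the strict inequality survive the supremum $\bigvee_{y\in M}$ --- matches the cited route; the last point can indeed be completed as you anticipate: with $u\le p$, $u(w_0)=p(w_0)\geq 1$ on $\Omega$, one gets $u(y-m_0)\le 1-\bigvee\{u(z)\,|\,z\in U\}$ for all $y\in M$, and $L^0$--absorbency of $U$ supplies $\eta\in L^0_{++}(\mathcal{F})$ with $u(\eta w_0)\geq\eta$, giving the gap $\delta=\eta$.

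The genuine gap is your reduction (iii), and it is not cosmetic: it is false that a $\mathcal{T}_{\varepsilon,\lambda}$--closed $L^0$--convex set automatically has the countable concatenation property. Your argument shows the partial concatenations lie in $M$ and form a $\mathcal{T}_{\varepsilon,\lambda}$--Cauchy sequence, but the full concatenation $\sum_{n}\tilde{I}_{A_n}g_n$ need not exist in $E$ at all, since Proposition \ref{pro:2.17} assumes neither completeness nor the countable concatenation property of $E$; closedness of $M$ \emph{in} $E$ then yields nothing. Concretely, take $\Omega=N$, $\mu(\{n\})=2^{-n}$, $E$ the $L^0$--submodule of finitely supported sequences in $L^0(\mathcal{F})$ with $\|x\|=|x|$, and $M=\{x\in E\ |\ x_1=0\}$: this $M$ is $\mathcal{T}_{\varepsilon,\lambda}$--closed and $L^0$--convex (indeed a submodule) but fails the countable concatenation property. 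Consequently Lemma \ref{lem:2.9} --- whose hypothesis that $M$ have the countable concatenation property is essential, by the counterexample of Remark \ref{rem:2.10} --- is simply not applicable under the hypotheses of Proposition \ref{pro:2.17}, and your whole detour through $\mathcal{T}_c$ collapses at its first step. This is exactly why the paper's $\mathcal{T}_c$--analogue, Corollary \ref{cor:2.18}, explicitly \emph{adds} the hypothesis that $M$ has the countable concatenation property, whereas Proposition \ref{pro:2.17} does not: the $(\varepsilon,\lambda)$--statement is proved by a direct $(\varepsilon,\lambda)$--argument, not by reduction to the locally $L^0$--convex machinery. To salvage your route you would need an extra step you do not take, e.g.\ passing to the $\mathcal{T}_{\varepsilon,\lambda}$--completion of $E$ (where the closure of $M$ does acquire the countable concatenation property, by your limit argument) and restricting the separating functional back to $E$. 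A minor further remark: your claim in (ii) that the set $(d(x,M)>0)$ is unchanged when $\mathcal{P}$ is replaced by $\mathcal{P}_{cc}$ is correct, but its justification uses finite gluings $\tilde{I}_Bm_1+\tilde{I}_{B^c}m_2\in M$ (available from $L^0$--convexity alone), not the countable concatenation property of $M$ --- fortunate, given that the latter is unavailable.
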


\begin{corollary}\label{cor:2.18}
\cite{GZZ15a}. Let $(E, \mathcal{P})$ be an $RLC$ module over $K$ with base $(\Omega, \mathcal{F}, \mu)$, $x \in E$ and $M \subset E$ a $\mathcal{T}_c$--closed $L^0$--convex nonempty subset such that $x \notin M$ and $M$ has the countable concatenation property. Then there exists $f \in E^{\ast}_c$ such that the following two conditions are satisfied:\\
$(1)$ $Ref(x) > \bigvee \{ Ref(y)\ |\ y \in M \}$ on $(d(x ,M) > 0)$;\\
$(2)$ $Ref(x) = \bigvee \{ Ref(y)\ |\ y \in M \}$ on $(d(x ,M) > 0)^c$.\\

In addition, if $\tilde{I}_{A}\{x\} \bigcap \tilde{I}_{A}M = \emptyset$ for all $A \in \mathcal{F}$ with $\mu(A) > 0$, then we have:\\
$(3)$ $Ref(x) > \bigvee \{ Ref(y)\ |\ y \in M \}$ on $\Omega$.
\end{corollary}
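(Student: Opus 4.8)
The plan is to obtain Corollary~\ref{cor:2.18} from its $(\varepsilon,\lambda)$-counterpart, Proposition~\ref{pro:2.17}, by first transferring the closedness hypothesis between the two topologies. Since $M$ is $\mathcal{T}_c$-closed and has the countable concatenation property, Lemma~\ref{lem:2.8} yields $\bar{M}_{\varepsilon,\lambda} = \bar{M}_c = M$, so $M$ is simultaneously $\mathcal{T}_{\varepsilon,\lambda}$-closed. As the quantity $d(x,M)$ and the set $(d(x,M)>0)$ depend only on $\mathcal{P}$ and not on the chosen topology, Proposition~\ref{pro:2.17} applies and produces a functional $f \in E^{\ast}_{\varepsilon,\lambda}$ satisfying (1) and (2). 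Moreover, under the extra hypothesis $\tilde{I}_{A}\{x\}\bigcap\tilde{I}_{A}M=\emptyset$ for all $A\in\mathcal{F}$ with $\mu(A)>0$, the remark preceding Proposition~\ref{pro:2.17} gives $d(x,M)>0$ on $\Omega$, so $(d(x,M)>0)^c$ is null and (1) collapses to (3).

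The whole difficulty is therefore to arrange that the separating functional lies in the smaller module $E^{\ast}_c$ rather than merely in $E^{\ast}_{\varepsilon,\lambda}$. The clean route is not to perturb the $f$ above but to rebuild it by a genuinely $\mathcal{T}_c$ construction. Put $B=(d(x,M)>0)$; since $M$ is $\mathcal{T}_{\varepsilon,\lambda}$-closed, $L^0$-convex and $x\notin M$, the remark before Proposition~\ref{pro:2.17} gives $\tilde{I}_{A}\{x\}\bigcap\tilde{I}_{A}M=\emptyset$ for every $A\subset B$ with $\mu(A)>0$. Remark~\ref{rem:2.11} (the localization of Lemma~\ref{lem:2.9} to the set $B$) then furnishes an $L^0$-convex, $L^0$-absorbent and $L^0$-balanced $\mathcal{T}_c$-neighborhood $U$ of $\theta$ with $\tilde{I}_{A}(x+U)\bigcap\tilde{I}_{A}(M+U)=\emptyset$ for all $A\subset B$ with $\mu(A)>0$. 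The set $M+U$ is $\mathcal{T}_c$-open and $L^0$-convex, so separating $x$ from it by the gauge (Minkowski $L^0$-functional) of $U$ yields a module homomorphism $f$; because $U$ contains a basic neighborhood $N_{\theta}(Q,\varepsilon)$ with a single $Q\in\mathcal{P}(F)$, this gauge is dominated by $\|\cdot\|_{Q}$, so $f$ is honestly $\mathcal{T}_c$-continuous, i.e. $f\in E^{\ast}_c$. One then reads off (1) on $B$ from the strict separation and (2) on $B^c$ exactly as in the proof of Proposition~\ref{pro:2.17}, where $d(x,M)=0$ forces equality.

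I expect the passage from $E^{\ast}_{\varepsilon,\lambda}$ to $E^{\ast}_c$ to be the genuine obstacle, and it is here that the countable concatenation property is indispensable. Indeed, by Proposition~\ref{pro:2.6} and Remark~\ref{rem:2.7} one has $E^{\ast}_{\varepsilon,\lambda}=H_{cc}(E^{\ast}_c)=(E,\mathcal{P}_{cc})^{\ast}_c$, and a decomposition $f=\sum_{n}\tilde{I}_{A_n}f_n$ with $f_n\in E^{\ast}_c$ shows that the functional of Proposition~\ref{pro:2.17} separates on each piece $A_n$ but need not be $\mathcal{T}_c$-bounded by a single $\|\cdot\|_{Q}$; gluing the pieces back into one $\mathcal{T}_c$-continuous functional requires the glued $L^0$-seminorm $\sum_n\tilde{I}_{A_n}\|\cdot\|_{Q_n}$ to again belong to $\mathcal{P}$, i.e. requires $\mathcal{P}$ to have the countable concatenation property (under which $E^{\ast}_{\varepsilon,\lambda}=E^{\ast}_c$ and the reduction from Proposition~\ref{pro:2.17} is immediate). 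This is exactly the mechanism built into Lemma~\ref{lem:2.9}, so the direct gauge construction and the reduction argument ultimately rest on the same hypothesis, and it is this single-$Q$ boundedness that separates the merely $\mathcal{T}_{\varepsilon,\lambda}$-continuous functionals from the $\mathcal{T}_c$-continuous ones.
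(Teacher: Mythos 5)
Your opening reduction is sound and is surely the intended derivation of this quoted result (the paper itself gives no proof of Corollary \ref{cor:2.18}; it is cited from \cite{GZZ15a}): since $M$ has the countable concatenation property, Lemma \ref{lem:2.8} gives $\bar{M}_{\varepsilon,\lambda}=\bar{M}_c=M$, Proposition \ref{pro:2.17} applies, and the remark preceding it collapses (1) to (3) under the extra disjointness hypothesis. The genuine gap lies in your passage from $E^{\ast}_{\varepsilon,\lambda}$ to $E^{\ast}_c$. Both Lemma \ref{lem:2.9} and its localization Remark \ref{rem:2.11}, on which your ``genuinely $\mathcal{T}_c$ construction'' rests, are stated only for $RLC$ modules in which $\mathcal{P}$ has the countable concatenation property --- a hypothesis Corollary \ref{cor:2.18} does not grant: only $M$ is assumed to have the property. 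Your third paragraph in effect concedes this, since every mechanism you invoke (domination of the gauge by a single $\|\cdot\|_Q$, gluing of the pieces $f_n$) is justified by assuming $\mathcal{P}=\mathcal{P}_{cc}$; but under that assumption $E^{\ast}_c=E^{\ast}_{\varepsilon,\lambda}$ holds outright, your first paragraph already finishes the proof in one line, and the whole gauge construction is redundant. So as a proof of the corollary \emph{under its stated hypotheses} the argument does not close.

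Nor can it be closed by a cleverer argument: without the countable concatenation property of $\mathcal{P}$ the printed statement is false. Take $E=L^0(\mathcal{F})$, a measurable partition $\{A_k : k\in N\}$ of $\Omega$ with $\mu(A_k)>0$, $B_n=\bigcup_{k\leq n}A_k$, and $\mathcal{P}=\{\tilde{I}_{B_n}|\cdot|\ :\ n\in N\}$; let $M=\{\theta\}$ (nonempty, $L^0$--convex, $\mathcal{T}_c$--closed since $\mathcal{T}_c$ is Hausdorff, and trivially with the countable concatenation property) and $x=1$. Then $d(x,M)=\bigvee_n \tilde{I}_{B_n}=1$, so (1) would require some $g\in E^{\ast}_c$ with $g(1)>0$ on $\Omega$; but every module homomorphism $g:L^0(\mathcal{F})\to L^0(\mathcal{F})$ is multiplication by $\eta=g(1)$, and testing $\mathcal{T}_c$--continuity at $\theta$ on $z_m=m\tilde{I}_{B_n^c}$ (which satisfy $\tilde{I}_{B_n}|z_m|=0$) forces $\eta=0$ on some $B_n^c$ of positive measure, a contradiction. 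Hence the quotation must implicitly carry the hypothesis that $\mathcal{P}$ has the countable concatenation property (exactly the mechanism used in Remark \ref{rem:2.19}, where $E^{\ast}_c=E^{\ast}_{\varepsilon,\lambda}$ is drawn from that hypothesis), or else the conclusion must be weakened to $f\in E^{\ast}_{\varepsilon,\lambda}$. With that hypothesis restored, your first paragraph together with the identity $E^{\ast}_{\varepsilon,\lambda}=E^{\ast}_c$ is a complete and correct proof, and the rest should be cut; as secondary points, note also that the gauge of $U$ is merely $L^0$--sublinear (producing a module homomorphism needs the $L^0$--Hahn--Banach theorem or Proposition \ref{pro:4.2}, applied after localizing to $E_B$ because the disjointness holds only for $A\subset B$), and the verification of (2) on $B^c$ requires the directedness of $\{\|x-y\|_Q\ |\ y\in M\}$, neither of which your sketch supplies.
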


In the sequel, for any $\xi$ and $\eta$ in $\bar{L}^0(\mathcal{F})$, let $\xi^0$ and $\eta^0$ be arbitrarily chosen representatives of $\xi$ and $\eta$, respectively. We use $(\xi \geq \eta)$ for the set $\{ \omega \in \Omega\ |\ \xi^{0}(\omega) \geq \eta^{0}(\omega) \}$. Although $(\xi \geq \eta)$ depends on the choice of $\xi^0$ and $\eta^0$, $(\xi \geq \eta)$ only differs by a $\mu$--null set, this would not produce any confusion as long as we interpret the equality and inclusion relations between sets as the equality and inclusion almost everywhere. Similarly, one can understand $(\xi > \eta)$ and $(\xi = \eta)$.

Now, we can give the proof of Lemma \ref{lem:2.16}.

\noindent\textbf{Proof of Lemma \ref{lem:2.16}.} By Definition \ref{def:2.12} $epi(f)$ is a nonempty $\mathcal{T}_{\varepsilon,\lambda}$--closed $L^0$--convex subset in the random locally convex module $E \times L^0(\mathcal{F})$ whose family of $L^0$--seminorm is $\{ \|\cdot\| + |\cdot| : \|\cdot\| \in \mathcal{P} \}$, where for any $(x, r) \in E \times L^0(\mathcal{F})$, $(\|\cdot\| + |\cdot|)(x, r) = \|x\| + |r|$ for all $\|\cdot\| \in \mathcal{P}$. Since $f$ is local and $f(x_0) > \beta$ on $\Omega$, it is obvious that $\tilde{I}_{A}(x_0, \beta) \bigcap \tilde{I}_{A}epi(f) = \emptyset$ for all $A \in \mathcal{F}$ with $\mu(A) > 0$. By Proposition \ref{pro:2.17} there exists $(g_1, g_2) \in (E \times L^0(\mathcal{F}))^{\ast}_{\varepsilon,\lambda} = E^{\ast}_{\varepsilon,\lambda} \times (L^0(\mathcal{F}))^{\ast}_{\varepsilon,\lambda}$ (in fact, $(L^0(\mathcal{F}))^{\ast}_{\varepsilon,\lambda} = L^0(\mathcal{F})$) such that $g_1(x_0) + g_2(\beta) > \delta := \bigvee_{(x, y) \in epi(f)}(g_1(x) + g_2(y))$ on $\Omega$. This has the following consequences:\\
$(i)$ $g_2(1) \leq 0$.

Indeed, by noticing $g_2(y) = y g_2(1)$ for all $y \in L^0(\mathcal{F})$ and the fact that $(x, y)$ also belongs to $epi(f)$ whenever $(x, r) \in epif$ and $y \in L^0(\mathcal{F})$ satisfies $y \geq r$, then $g_1(x) + g_2(y)$ is also large enough on $(g_2(1) > 0)$ for larger $y \in L^0(\mathcal{F})$, which means $\mu(g_2(1) > 0) = 0$ since $g_1(x) + g_2(y)$ is bounded above by $g_1(x_0) + g_2(\beta)$.\\
$(ii)$ $(f(x_0) < +\infty) \subset (g_2(1) < 0)$.

Indeed, define $\hat{x}_0 = \tilde{I}_{(f(x_0) < +\infty)} x_0 + \tilde{I}_{(f(x_0) = +\infty)} x$ for some $x \in dom(f)$, then by $L^0$--convexity of $f, \hat{x}_0 \in dom(f)$. Further, local property of $f$ and the definition of $\delta$ imply $g_1(x_0) + g_2(f(x_0)) = g_1(\hat{x}_0) + g_2(f(\hat{x}_0)) < g_1(x_0) + g_2(\beta)$ on $(f(x_0) < +\infty)$. Hence $f(x_0)g_2(1) = g_2(f(x_0)) < g_2(\beta) = \beta g_2(1)$ on $(f(x_0) < +\infty)$, which implies that $g_2(1) < 0$ on $(f(x_0) < +\infty)$.

We distinguish the two cases $x_0 \in dom(f)$ and $x_0 \notin dom(f)$.

Case 1. Assume $x_0 \in dom(f)$. Then $g_2(1) < 0$ on $\Omega$ by $(ii)$. Thus, define $h$ by
\begin{equation*}
  h(x) = -\frac{g_1(x - x_0)}{g_2(1)} + \beta
\end{equation*}
for all $x \in E$, which satisfies our requirement.
Indeed, $h(x) \leq f(x)$ for all $x \in dom(f)$ by the definition of $\delta$. If $x \notin dom(f)$, we take some $x^{\prime \prime} \in dom(f)$ and $x^{\prime} = \tilde{I}_{B}x + \tilde{I}_{B^c}x^{\prime \prime}$, where $B = (f(x) < +\infty)$, then $\tilde{I}_{B}h(x) = \tilde{I}_{B}h(x^{\prime}) \leq \tilde{I}_{B}f(x)$ by noticing $x^{\prime} \in dom(f)$. Hence, $h(x) \leq f(x)$ for all $x \in E$ and it is obvious that $h(x_0) = \beta$.

Case 2. Assume $x_0 \notin dom(f)$. Then choose any $x^{\prime}_0 \in dom(f)$ and let $\beta^{\prime} = f(x^{\prime}_0) - \varepsilon$ for some $\varepsilon \in L^0_{++}(\mathcal{F})$, by Case 1 above there corresponds a $\mathcal{T}_{\varepsilon,\lambda}$--continuous affine function $h^{\prime} : E \to L^0(\mathcal{F})$ such that $h^{\prime}(x^{\prime}_0) = \beta^{\prime}$ and $h^{\prime}(x) \leq f(x)$ for all $x \in E$. Now, define $A_1
= (g_2(1) < 0), A_2 = A^c_1$ and $h_1, h_2 : E \to L^0(\mathcal{F})$ as follow:

\begin{equation*}
  h_1(x) = \tilde{I}_{A_1}\left(- \frac{g_1(x - x_0)}{g_2(1)} + \beta\right)
\end{equation*}
for all $x \in E$;
\begin{equation*}
  h_2(x) = \tilde{I}_{A_2}\left[h^{\prime}(x) + \tilde{I}_{(h^{\prime}(x_0) \geq \beta)}(\beta - h^{\prime}(x_0)) + \tilde{I}_{(h^{\prime}(x_0) < \beta)} \frac{\beta - h^{\prime}(x_0)}{\tilde{h}(x_0)}\tilde{h}(x)\right]
\end{equation*}
for all $x \in E$;\\
where $\tilde{h} : E \to L^0(\mathcal{F})$ is defined by $\tilde{h}(x) = \delta - g_1(x)$ for all $x \in E$, and we adopt the convention $\frac{0}{0} = 0$.

Finally, note that $\tilde{h}(x_0) < 0$ on $(g_2(1) = 0)$ and $\tilde{h}(x) \geq 0$ on $(g_2(1) = 0)$ for all $x \in dom(f)$. It follows that $h = h_1 + h_2$ is as required.

This completes the proof.\qed

\begin{remark}\label{rem:2.19}
Let $(E, \mathcal{P})$ be an $RLC$ module over $R$ with base $(\Omega, \mathcal{F}, \mu)$, $f : E \to \bar{L}^0(\mathcal{F})$ a $\mathcal{T}_c$--lower semicontinuous $L^0$--convex function and $\varepsilon \in L^0_{++}(\mathcal{F})$. $g \in E^{\ast}_c$ is called an $\varepsilon$--subgradient of $f$ at $x_0 \in dom(f)$, if $g(x - x_0) \leq f(x) - f(x_0) + \varepsilon$ for all $x \in E$. If $E$ and $\mathcal{P}$ both have the countable concatenation property, then for any $x_0 \in dom(f)$ and $\varepsilon \in L^0_{++}(\mathcal{F})$, $f$ has an $\varepsilon$--subgradient at $x_0$. In fact, by Lemma \ref{lem:2.16} and Theorem \ref{the:2.13} there exists $h = g + \alpha$ such that $h(x_0) = g(x_0) + \alpha = f(x_0) - \varepsilon$ and $h(x) \leq f(x)$ for all $x \in E$, where $g \in E^{\ast}_{\varepsilon,\lambda}$ and $\alpha \in L^0(\mathcal{F})$, and thus $\alpha = f(x_0) - g(x_0) - \varepsilon$ and $g(x) + f(x_0) - g(x_0) - \varepsilon \leq f(x)$ for all $x \in E$, namely $g(x - x_0) \leq f(x) - f(x_0) + \varepsilon$. Since $\mathcal{P}$ has the countable concatenation property, $E^{\ast}_c = E^{\ast}_{\varepsilon,\lambda}$, then $g$ also belongs to $E^{\ast}_c$, which is just an $\varepsilon$--subgradient of $f$ at $x_0$. From now on, we always denote by $\partial_{\varepsilon}f(x_0)$ the set of $\varepsilon$--subgradients of $f$ at $x_0$.
\end{remark}

As a corollary of Proposition \ref{pro:2.15}, we can get the following:
\begin{proposition}\label{pro:2.20}
Let $(E, \mathcal{P})$ be an $RLC$ module over $R$ with base $(\Omega, \mathcal{F}, \mu)$ such that $E$ has the countable concatenation property, and $f : E \to \bar{L}^0(\mathcal{F})$ a proper $\mathcal{T}_c$--lower semicontinuous $L^0$--convex function. Then $f^{\ast \ast}_c = f$.
\end{proposition}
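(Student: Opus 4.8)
The plan is to deduce Proposition \ref{pro:2.20} from the $(\varepsilon,\lambda)$--version already recorded as Proposition \ref{pro:2.15}, together with the precise description $E^{\ast}_{\varepsilon,\lambda} = H_{cc}(E^{\ast}_c)$ of Proposition \ref{pro:2.6}. The inequality $f^{\ast\ast}_c \leq f$ is free: for every $g \in E^{\ast}_c$ and every $x \in E$ one has $f^{\ast}_c(g) \geq g(x) - f(x)$, hence $g(x) - f^{\ast}_c(g) \leq f(x)$, and taking the supremum over $g \in E^{\ast}_c$ gives $f^{\ast\ast}_c \leq f$. Thus the whole problem is to prove the reverse inequality, and the strategy is to show $f = f^{\ast\ast}_{\varepsilon,\lambda} \leq f^{\ast\ast}_c$.

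First I would arrange that $f$ is $\mathcal{T}_{\varepsilon,\lambda}$--lower semicontinuous, so that Proposition \ref{pro:2.15} applies and yields $f^{\ast\ast}_{\varepsilon,\lambda} = f$. Since only $E$ (and not $\mathcal{P}$) is assumed to have the countable concatenation property, I would pass to $\mathcal{P}_{cc}$. As $\mathcal{P} \subseteq \mathcal{P}_{cc}$, the locally $L^0$--convex topology $\mathcal{T}_c$ induced by $\mathcal{P}$ is coarser than the one induced by $\mathcal{P}_{cc}$, so every $\mathcal{T}_c$--closed sublevel set $\{ x \in E\ |\ f(x) \leq r \}$ remains closed in the topology induced by $\mathcal{P}_{cc}$; hence $f$ is lower semicontinuous for the latter topology. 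Now $(E, \mathcal{P}_{cc})$ satisfies the full hypotheses of Theorem \ref{the:2.13} (both $E$ and $\mathcal{P}_{cc}$ have the countable concatenation property), so the equivalence (1) $\Leftrightarrow$ (2) there upgrades this to $\mathcal{T}_{\varepsilon,\lambda}$--lower semicontinuity for $\mathcal{P}_{cc}$; and since $\mathcal{P}$ and $\mathcal{P}_{cc}$ induce the same $(\varepsilon,\lambda)$--topology (Remark \ref{rem:2.7}), $f$ is $\mathcal{T}_{\varepsilon,\lambda}$--lower semicontinuous in the original sense. Proposition \ref{pro:2.15} then gives $f^{\ast\ast}_{\varepsilon,\lambda} = f$.

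The main step is to show $f^{\ast\ast}_{\varepsilon,\lambda} \leq f^{\ast\ast}_c$. By Proposition \ref{pro:2.6} every $g \in E^{\ast}_{\varepsilon,\lambda}$ can be written as $g = \sum_{n=1}^{\infty}\tilde{I}_{A_n}g_n$ for some countable partition $\{ A_n : n \in N \}$ of $\Omega$ to $\mathcal{F}$ and some sequence $\{ g_n : n \in N \}$ in $E^{\ast}_c$. Using $\tilde{I}_{A_n}g = \tilde{I}_{A_n}g_n$, the locality of $f$, and the fact that multiplication by $\tilde{I}_{A_n}$ commutes with $\bigvee$ in $\bar{L}^0(\mathcal{F})$, I would first compute
\begin{equation*}
\tilde{I}_{A_n} f^{\ast}_{\varepsilon,\lambda}(g) = \bigvee_{z \in E}\tilde{I}_{A_n}\bigl(g(z) - f(z)\bigr) = \bigvee_{z \in E}\tilde{I}_{A_n}\bigl(g_n(z) - f(z)\bigr) = \tilde{I}_{A_n} f^{\ast}_c(g_n),
\end{equation*}
so that $f^{\ast}_{\varepsilon,\lambda}(g) = \sum_{n=1}^{\infty}\tilde{I}_{A_n}f^{\ast}_c(g_n)$. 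Consequently, for every $x \in E$,
\begin{equation*}
g(x) - f^{\ast}_{\varepsilon,\lambda}(g) = \sum_{n=1}^{\infty}\tilde{I}_{A_n}\bigl(g_n(x) - f^{\ast}_c(g_n)\bigr) \leq \sum_{n=1}^{\infty}\tilde{I}_{A_n}f^{\ast\ast}_c(x) = f^{\ast\ast}_c(x),
\end{equation*}
where the inequality uses that each $g_n \in E^{\ast}_c$ satisfies $g_n(x) - f^{\ast}_c(g_n) \leq f^{\ast\ast}_c(x)$ by the very definition of $f^{\ast\ast}_c$. Taking the supremum over $g \in E^{\ast}_{\varepsilon,\lambda}$ yields $f = f^{\ast\ast}_{\varepsilon,\lambda} \leq f^{\ast\ast}_c$, and combined with $f^{\ast\ast}_c \leq f$ this gives $f^{\ast\ast}_c = f$.

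I expect the delicate points to be, first, the reduction ensuring $\mathcal{T}_{\varepsilon,\lambda}$--lower semicontinuity under the weaker hypothesis that only $E$ (not $\mathcal{P}$) has the countable concatenation property; and second --- the genuine crux --- the passage from $E^{\ast}_{\varepsilon,\lambda}$ to $E^{\ast}_c$ in the biconjugate. One cannot merely restrict the supremum, since $E^{\ast}_c$ may be strictly smaller than $E^{\ast}_{\varepsilon,\lambda}$; one must exploit the concatenation structure $E^{\ast}_{\varepsilon,\lambda} = H_{cc}(E^{\ast}_c)$ together with the stratification of $f^{\ast}$, justifying carefully that $\tilde{I}_{A_n}$ may be moved inside $\bigvee_{z \in E}$ and that the resulting identities are compatible with the conventions $0 \cdot (\pm\infty) = 0$ and $\infty - \infty = \infty$.
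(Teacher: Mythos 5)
Your proposal is correct and follows essentially the same route as the paper: reduce to $(E,\mathcal{P}_{cc})$ to upgrade $\mathcal{T}_c$--lower semicontinuity to $\mathcal{T}_{\varepsilon,\lambda}$--lower semicontinuity via Theorem \ref{the:2.13}, apply Proposition \ref{pro:2.15} to get $f=f^{\ast\ast}_{\varepsilon,\lambda}$, and then use $E^{\ast}_{\varepsilon,\lambda}=H_{cc}(E^{\ast}_c)$ together with locality of $g\mapsto g(x)-f^{\ast}_{\varepsilon,\lambda}(g)$ to replace the supremum over $E^{\ast}_{\varepsilon,\lambda}$ by one over $E^{\ast}_c$. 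The only difference is cosmetic: where the paper invokes Lemma 5.2 of \cite{GZZ15a} for that last reduction, you carry out the stratification computation $\tilde{I}_{A_n}f^{\ast}_{\varepsilon,\lambda}(g)=\tilde{I}_{A_n}f^{\ast}_c(g_n)$ explicitly, which is a sound inline proof of the same fact.
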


\begin{proof}
We first consider the $RLC$ module $(E, \mathcal{P}_{cc})$ and let $\mathcal{T}^{\prime}_{\varepsilon,\lambda}$ and $\mathcal{T}^{\prime}_c$ be the $(\varepsilon, \lambda)$--topology and the locally $L^0$--convex topology on $E$ induced by $\mathcal{P}_{cc}$, respectively. It is obvious that $\mathcal{T}^{\prime}_c$ is stronger than $\mathcal{T}_c$, so $f$ is also $\mathcal{T}^{\prime}_c$--lower semicontinuous. Since $E$ and $\mathcal{P}_{cc}$ both have the countable concatenation property, $f$ is also $\mathcal{T}^{\prime}_{\varepsilon,\lambda}$--lower semicontinuous by Theorem \ref{the:2.13}, and hence also $\mathcal{T}_{\varepsilon,\lambda}$--lower semicontinuous since $\mathcal{P}_{cc}$ and $\mathcal{P}$ induce the same $(\varepsilon, \lambda)$--topology. By Proposition \ref{pro:2.15} $f = f^{\ast \ast}_{\varepsilon,\lambda}$, namely $f(x) = \bigvee \{ g(x) - f^{\ast}_{\varepsilon,\lambda}(g)\ |\ g \in E^{\ast}_{\varepsilon,\lambda} \}$. By Proposition \ref{pro:2.6} $E^{\ast}_{\varepsilon,\lambda} = H_{cc}(E^{\ast}_c)$, namely for each $g \in E^{\ast}_{\varepsilon,\lambda}$ there exists a sequence $\{ g_n\ |\ n \in N \}$ in $E^{\ast}_c$ and a countable partition $\{ A_n\ |\ n \in N \}$ of $\Omega$ to $\mathcal{F}$ such that $g = \sum^{\infty}_{n=1}\tilde{I}_{A_n}g_n$. Since for a fixed $x \in E$, $g(x) - f^{\ast}_{\varepsilon,\lambda}(g)$ is local with respect to $g$, one can have that $f(x) = \bigvee \{ g(x) - f^{\ast}_{\varepsilon,\lambda}(g)\ |\ g \in E^{\ast}_{\varepsilon,\lambda} \} = \bigvee \{ g(x) - f^{\ast}_{\varepsilon,\lambda}(g)\ |\ g \in H_{cc}(E^{\ast}_c) \} = \bigvee \{ g(x) - f^{\ast}_{\varepsilon,\lambda}(g)\ |\ g \in E^{\ast}_c \}$ by Lemma 5.2 of \cite{GZZ15a}. Again, by noticing $f^{\ast}_{\varepsilon,\lambda}|_{E^{\ast}_c} = f^{\ast}_c$, one can have that $f(x) = \bigvee \{ g(x) - f^{\ast}_c(g)\ |\ g \in E^{\ast}_c \} = f^{\ast \ast}_c(x)$.

This completes the proof.
\end{proof}

\begin{remark}\label{rem:2.21}
When $f$ satisfies the condition that $epi(f)$ is closed in $(E, \mathcal{T}_c) \times (L^0(\mathcal{F}), \mathcal{T}_c)$, Proposition \ref{pro:2.20} is exactly Theorem 5.2 of \cite{GZZ15a}, in fact, Proposition \ref{pro:2.20} has the same idea of proof as Theorem 5.2 of \cite{GZZ15a}.
\end{remark}

Let us conclude this section with some discussions on nonproper closed functions.

Let $(E, \mathcal{P})$ be an $RLC$ module over $R$ with base $(\Omega, \mathcal{F}, \mu)$ and $f : E \to \bar{L}^0(\mathcal{F})$ a local function. Let us recall some notation from \cite{GZZ15a} as follows:

$\mathcal{A} = \{ A \in \mathcal{F}\ |\ $there exists $x \in E$ such that $\tilde{I}_{A}f(x) = \tilde{I}_{A}(-\infty) \}$;

$\mathcal{B} = \{ A \in \mathcal{F}\ |\ \tilde{I}_{A}f(x) = \tilde{I}_{A}(+\infty)$ for all $x \in E \}$;

$MI(f) = esssup(\mathcal{A})$;

$PI(f) = esssup(\mathcal{B})$;

$BP(f) = \Omega \backslash (MI(f) \bigcup PI(f))$.

It is easy to check that $\tilde{I}_{PI(f)}f(x) = \tilde{I}_{PI(f)}(+\infty)$ for all $x \in E$ and $f(x) > - \infty$ on $BP(f)$ for all $x \in E$.

For each $D \in \mathcal{F}$ with $\mu(D) > 0$, let $E_D = \tilde{I}_{D}E = \{ \tilde{I}_{D}x\ |\ x \in E \}$ and $\mathcal{P}_D = \{ \|\cdot\|_D\ |\ \|\cdot\| \in \mathcal{P} \}$, where $\|\cdot\|_D$ stands for the restriction of $\|\cdot\|$ to $E_D$. Then $(E_D, \mathcal{P}_D)$ is an $RLC$ module over $R$ with base $(D, D \bigcap \mathcal{F}, \mu_D)$, where $\mu_D$ is the restriction of $\mu$ to $D \bigcap \mathcal{F}$. Further $f_D : E_D \to \tilde{I}_{D}L^0(\mathcal{F})$ is defined by $f_{D}(\tilde{I}_{D}x) = \tilde{I}_{D}f(x)$ for all $x \in E$, where $\tilde{I}_{D}L^0(\mathcal{F})$ is identified with $L^0(D \bigcap \mathcal{F})$.

\begin{definition}\label{def:2.22}
$f$ is said to be $\mathcal{T}_{\varepsilon,\lambda}$--(or, $\mathcal{T}_c$--)closed if $\tilde{I}_{MI(f)}f(x) = \tilde{I}_{MI(f)}(-\infty)$ for all $x \in E$ and $f_A$ is a proper $L^0(A \bigcap \mathcal{F})$--convex $\mathcal{T}_{\varepsilon,\lambda}$--(or, $\mathcal{T}_c$--)lower semicontinuous function on $(E_A, \mathcal{P}_A)$ for all $A \in \mathcal{F}$ with $A \subset BP(f)$ and $\mu(A) > 0$.
\end{definition}

Similar to the proof of Proposition 5.2 of \cite{GZZ15a}, one can have the following:

\begin{proposition}\label{pro:2.23}
Let $\{ f_{\alpha}, \alpha \in \Gamma \}$ be a family of $\mathcal{T}_{\varepsilon,\lambda}$--(respectively, $\mathcal{T}_c$--)closed functions from $(E, \mathcal{P})$ to $\bar{L}^0(\mathcal{F})$ and $f = \bigvee_{\alpha \in \Gamma}f_\alpha$ defined by $f(x) = \bigvee \{ f_{\alpha}(x)\ |\ \alpha \in \Gamma \}$ for all $x \in E$. Then $f$ is still $\mathcal{T}_{\varepsilon,\lambda}$--closed(respectively, $\mathcal{T}_c$--closed).
\end{proposition}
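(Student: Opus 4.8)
The plan is to check the two defining clauses of Definition \ref{def:2.22} for $f=\bigvee_{\alpha\in\Gamma}f_\alpha$: first that $f\equiv-\infty$ on $MI(f)$, and second that for every $A\in\mathcal{F}$ with $A\subset BP(f)$ and $\mu(A)>0$ the reduction $f_A$ is proper, $L^0$--convex and lower semicontinuous (in the $\mathcal{T}_{\varepsilon,\lambda}$-- or $\mathcal{T}_c$--sense, according to the case). I would dispose of the $MI$--clause first. Since each $f_\alpha$ is closed, $f_\alpha\equiv-\infty$ on $MI(f_\alpha)$; and since $f\geq f_\alpha$, any $x$ and any $A$ with $f(x)=-\infty$ on $A$ force $f_\alpha(x)=-\infty$ on $A$, hence $A\subset MI(f_\alpha)$, for every $\alpha$. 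Thus $MI(f)$ is contained in the essential intersection $D$ of the sets $MI(f_\alpha)$; conversely every $f_\alpha$ is identically $-\infty$ on $D$, so $f=\bigvee_\alpha f_\alpha\equiv-\infty$ on $D$. Therefore $MI(f)=D$ and $f\equiv-\infty$ on $MI(f)$, which is the first clause.

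For the $BP$--clause, fix $A\subset BP(f)$ with $\mu(A)>0$. The key preliminary observation I would establish is that $A\cap PI(f_\alpha)$ is null for every $\alpha$: on $A\cap PI(f_\alpha)$ one has $f_\alpha\equiv+\infty$, hence $f\geq f_\alpha\equiv+\infty$ there for all $x$, so $A\cap PI(f_\alpha)\subset PI(f)$, which is null because $A\subset BP(f)$. Consequently, up to a null set, $A=(A\cap MI(f_\alpha))\cup(A\cap BP(f_\alpha))$, and the restriction $(f_\alpha)_A$ is identically $-\infty$ on $A\cap MI(f_\alpha)$ and is proper, $L^0$--convex and lower semicontinuous on $A\cap BP(f_\alpha)$ by the closedness of $f_\alpha$.

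Convexity and lower semicontinuity of $f_A$ then follow by intersection arguments. Each $(f_\alpha)_A$ is $L^0$--convex and lower semicontinuous on $E_A$: the $MI$--part imposes no constraint, so $epi((f_\alpha)_A)$ (in the $\mathcal{T}_{\varepsilon,\lambda}$ case) and each sublevel set $\{(f_\alpha)_A\leq r\}$ (in the $\mathcal{T}_c$ case) is the preimage, under the continuous reduction $\tilde{I}_{A\cap BP(f_\alpha)}$, of the corresponding $L^0$--convex, respectively closed, object attached to the proper piece, and hence is again $L^0$--convex, respectively closed (cf. Remark \ref{rem:2.11}). Since $epi(f_A)=\bigcap_\alpha epi((f_\alpha)_A)$ and $\{f_A\leq r\}=\bigcap_\alpha\{(f_\alpha)_A\leq r\}$, and intersections of $L^0$--convex sets (respectively of closed sets) are $L^0$--convex (respectively closed) in both topologies, the function $f_A$ is $L^0$--convex and lower semicontinuous once its properness is known; the two topological cases differ only in which notion of lower semicontinuity is inherited from the proper pieces.

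The main obstacle is the properness of $f_A$. The inequality $f>-\infty$ on $BP(f)$ is automatic, since a positive--measure subset of $A$ on which some $x$ gives $f(x)=-\infty$ would lie in $MI(f)$; so the real content is that $dom(f_A)\neq\emptyset$, i.e. that a \emph{single} $x$ satisfies $f(x)<+\infty$ a.e. on $A$. Here no individual $f_\alpha$ can be used, since $f_\alpha\leq f$ gives no upper control on $f$. Instead I would set $G=\mathrm{esssup}\{(f(x)<+\infty)\cap A:x\in E\}$; if $\mu(A\setminus G)>0$ then every $x$ satisfies $f(x)=+\infty$ a.e. on $A\setminus G$, forcing $A\setminus G\subset PI(f)$ and contradicting $A\subset BP(f)$, so $G=A$. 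Because $f$ is local, the family $\{(f(x)<+\infty)\cap A:x\in E\}$ is directed upward, so the essential supremum is attained along an increasing sequence $B_n\uparrow A$ with witnesses $x_n$, and the countable concatenation property of $E$ yields $x^{\ast}=\sum_{n}\tilde{I}_{B_n\setminus B_{n-1}}x_n$ with $f(x^{\ast})<+\infty$ a.e. on $A$. This is the step that genuinely requires the countable concatenation of $E$ and the careful bookkeeping of the mismatch between the $MI/PI/BP$ decompositions of $f$ and of the individual $f_\alpha$; it is the crux of the argument, the remaining verifications being intersection-stability facts that hold uniformly for $\mathcal{T}_{\varepsilon,\lambda}$ and $\mathcal{T}_c$.
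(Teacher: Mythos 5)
The paper itself supplies no argument for this proposition (it merely points to Proposition 5.2 of \cite{GZZ15a}), so there is no written proof to compare against; judged on its own terms, your argument is correctly organized and the individual reductions are sound. The identification of $MI(f)$ with the essential intersection of the $MI(f_\alpha)$, the observation that $A\cap PI(f_\alpha)$ is null whenever $A\subset BP(f)$, the representation of $epi((f_\alpha)_A)$ and of the sublevel sets as preimages under $x\mapsto \tilde{I}_{A\cap BP(f_\alpha)}x$ of the objects attached to the proper piece, and the intersection identities $epi(f_A)=\bigcap_\alpha epi((f_\alpha)_A)$ and $\{f_A\leq r\}=\bigcap_\alpha\{(f_\alpha)_A\leq r\}$ all check out, and you correctly isolate properness of $f_A$ as the only nontrivial clause.

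The point you must make explicit, however, is that your properness argument invokes a hypothesis that the proposition as printed does not contain: the countable concatenation property of $E$. You need it exactly where you say you do, to concatenate the witnesses $x_n$ along $B_n\uparrow A$ into a single $x^{\ast}$ with $f(x^{\ast})<+\infty$ a.e.\ on $A$ (and note in passing that $\{B_n\setminus B_{n-1}\}$ must be completed to a countable partition of $\Omega$, say by adjoining $A^c$, before Definition \ref{def:2.3} applies). Without that property the statement is in fact false. For instance, let $\Omega=N$ with counting measure, let $E$ be the $L^0(\mathcal{F})$--submodule of finitely supported elements of $L^0(\mathcal{F})$ with the $L^0$--norm $|\cdot|$, and for each $k$ let $f_k$ be the local $L^0$--convex indicator of $C_k=\{\xi\in E\ |\ \xi\geq 1\ \mbox{on}\ \{k\}\}$, i.e.\ $f_k(\xi)=0$ if $\xi\geq 1$ on $\{k\}$ and $f_k(\xi)=+\infty\cdot\tilde{I}_{\{k\}}$ otherwise. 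Each $f_k$ is proper, $L^0$--convex and lower semicontinuous in both topologies, hence closed with $BP(f_k)=\Omega$; but $f=\bigvee_k f_k$ satisfies $MI(f)=PI(f)=\emptyset$ while $dom(f)=\emptyset$ (a finitely supported $\xi$ cannot satisfy $\xi\geq 1$ at all but finitely many points), so $f_{BP(f)}=f$ is not proper and $f$ is not closed. Accordingly, either the countable concatenation property of $E$ must be recorded as a hypothesis of the proposition (it is assumed in the neighbouring Theorem \ref{the:2.13} and Propositions \ref{pro:2.20} and \ref{pro:2.25}, and is presumably in force in the result of \cite{GZZ15a} being imitated), or the properness clause cannot be established. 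With that hypothesis added, your proof is complete.
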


\begin{definition}\label{def:2.24}
Let $H = \{ g : E \to \bar{L}^0(\mathcal{F})\ |\ g \leq f$ and $g$ is $\mathcal{T}_{\varepsilon,\lambda}$--closed \}, then $cl_{\varepsilon,\lambda}(f) := \bigvee H$ is called the $\mathcal{T}_{\varepsilon,\lambda}$--closure of $f$. Similarly, one can also have the notion of $\mathcal{T}_c$--closure of $f$ (denoted by $cl_{c}(f)$).
\end{definition}

Theorem 5.3 of \cite{GZZ15a} shows $f^{\ast \ast}_{\varepsilon,\lambda} = cl_{\varepsilon,\lambda}(f)$ for all local function $f$ from an $RLC$ module $(E, \mathcal{P})$ to $\bar{L}^0(\mathcal{F})$. Similar to Corollary 5.1 of \cite{GZZ15a}, we also have the following:

\begin{proposition}\label{pro:2.25}
Let $(E, \mathcal{P})$ be an $RLC$ module over $R$ with base $(\Omega, \mathcal{F}, \mu)$ such that $E$ has the countable concatenation property, and $f : E \to \bar{L}^0(\mathcal{F})$ a local function. Then $f^{\ast \ast}_c = cl_{c}(f)$.
\end{proposition}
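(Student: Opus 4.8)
The plan is to reduce the claim to its already-established $(\varepsilon,\lambda)$--counterpart, Theorem 5.3 of \cite{GZZ15a}, which gives $f^{\ast\ast}_{\varepsilon,\lambda} = cl_{\varepsilon,\lambda}(f)$ for every local $f$. Concretely, I would prove the two identifications $f^{\ast\ast}_c = f^{\ast\ast}_{\varepsilon,\lambda}$ and $cl_c(f) = cl_{\varepsilon,\lambda}(f)$, and then chain them with Theorem 5.3 of \cite{GZZ15a} to obtain $f^{\ast\ast}_c = cl_{\varepsilon,\lambda}(f) = cl_c(f)$.

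First I would establish $f^{\ast\ast}_c = f^{\ast\ast}_{\varepsilon,\lambda}$ by repeating the final computation in the proof of Proposition \ref{pro:2.20}. The key observation is that this computation never used convexity, properness or lower semicontinuity, only locality: since $f$ is local and $E$ has the countable concatenation property, the map $g \mapsto g(x) - f^{\ast}_{\varepsilon,\lambda}(g)$ is local in $g$ for each fixed $x$, so by Proposition \ref{pro:2.6} ($E^{\ast}_{\varepsilon,\lambda} = H_{cc}(E^{\ast}_c)$) together with Lemma 5.2 of \cite{GZZ15a} the supremum defining $f^{\ast\ast}_{\varepsilon,\lambda}(x)$ over $E^{\ast}_{\varepsilon,\lambda}$ collapses to the supremum over $E^{\ast}_c$; since $f^{\ast}_{\varepsilon,\lambda}|_{E^{\ast}_c} = f^{\ast}_c$, the latter is precisely $f^{\ast\ast}_c(x)$. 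Combined with Theorem 5.3 of \cite{GZZ15a} this already yields $f^{\ast\ast}_c = cl_{\varepsilon,\lambda}(f)$, so it remains only to prove $cl_{\varepsilon,\lambda}(f) = cl_c(f)$.

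The inclusion $cl_{\varepsilon,\lambda}(f) \le cl_c(f)$ is the easy one. By Proposition \ref{pro:2.23} the function $cl_{\varepsilon,\lambda}(f)$ is $\mathcal{T}_{\varepsilon,\lambda}$--closed, and since $\mathcal{T}_c$ is finer than $\mathcal{T}_{\varepsilon,\lambda}$ every $\mathcal{T}_{\varepsilon,\lambda}$--closed function is $\mathcal{T}_c$--closed (the sets $MI, PI, BP$ of Definition \ref{def:2.22} are topology-independent, and $\mathcal{T}_{\varepsilon,\lambda}$--lower semicontinuity passes to the finer topology $\mathcal{T}_c$). As $cl_{\varepsilon,\lambda}(f) \le f$, it is thus one of the $\mathcal{T}_c$--closed minorants whose supremum is $cl_c(f)$, whence $cl_{\varepsilon,\lambda}(f) \le cl_c(f)$.

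The reverse inclusion $cl_c(f) \le cl_{\varepsilon,\lambda}(f)$ is where the real difficulty lies, precisely because the hypothesis grants the countable concatenation property to $E$ but not to $\mathcal{P}$, so Theorem \ref{the:2.13} is not directly applicable. I would circumvent this exactly as in Proposition \ref{pro:2.20}, by passing to $\mathcal{P}_{cc}$. Writing $\mathcal{T}'_c$ and $\mathcal{T}'_{\varepsilon,\lambda}$ for the topologies induced by $\mathcal{P}_{cc}$, note that $cl_c(f)$ is $\mathcal{T}_c$--closed by Proposition \ref{pro:2.23}, hence $\mathcal{T}'_c$--closed since $\mathcal{T}'_c$ is finer than $\mathcal{T}_c$. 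Now $E$ and $\mathcal{P}_{cc}$ both have the countable concatenation property, so Theorem \ref{the:2.13}, applied through Definition \ref{def:2.22} to the proper restrictions $(cl_c(f))_A$ on $BP(cl_c(f))$, upgrades $\mathcal{T}'_c$--lower semicontinuity to $\mathcal{T}'_{\varepsilon,\lambda}$--lower semicontinuity; since $\mathcal{T}'_{\varepsilon,\lambda} = \mathcal{T}_{\varepsilon,\lambda}$ by Remark \ref{rem:2.7}, this shows $cl_c(f)$ is $\mathcal{T}_{\varepsilon,\lambda}$--closed, and being $\le f$ it lies below $cl_{\varepsilon,\lambda}(f)$, giving equality. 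The main obstacle, and the step I would verify most carefully, is this last upgrade: one must check that each restriction $(E_A, (\mathcal{P}_{cc})_A)$ again satisfies the hypotheses of Theorem \ref{the:2.13} --- that is, that the countable concatenation property is inherited by $E_A$ and by $(\mathcal{P}_{cc})_A$ --- so that the equivalence $(1)\Leftrightarrow(2)$ there may legitimately be invoked on each relevant piece.
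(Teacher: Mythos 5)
Your proposal is correct and follows the route the paper intends: the paper gives no explicit proof of Proposition \ref{pro:2.25} beyond the pointer ``similar to Corollary 5.1 of \cite{GZZ15a}'', and your reduction --- collapsing $f^{\ast\ast}_{\varepsilon,\lambda}$ to $f^{\ast\ast}_c$ exactly as in the final computation of Proposition \ref{pro:2.20} (which indeed uses only locality, via Proposition \ref{pro:2.6} and Lemma 5.2 of \cite{GZZ15a}) and identifying $cl_c(f)$ with $cl_{\varepsilon,\lambda}(f)$ by passing to $\mathcal{P}_{cc}$ and invoking Theorem \ref{the:2.13} on the pieces over $BP(cl_c(f))$ --- is precisely the adaptation of that argument built on Theorem 5.3 of \cite{GZZ15a}. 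The points you single out for verification (that $E_A$ and $(\mathcal{P}_{cc})_A$ inherit the countable concatenation property) do go through routinely, consistently with the paper's own implicit use of such restrictions in Remark \ref{rem:2.11} and Definition \ref{def:2.22}.
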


\section{Continuity}\label{sec:3}
Let$(E, \mathcal{P})$ be an $RLC$ module. An $L^0$--balanced, $L^0$--absorbent, $L^0$--convex and $\mathcal{T}_c$--closed subset of $E$ is called an $L^0$--barrel. $(E, \mathcal{P})$ is said to be $L^0$--pre--barrelled if every $L^0$--barrel with the countable concatenation property is a $\mathcal{T}_c$--neighborhood of $\theta$. In \cite{GZZ15b} it is proved that for an $RLC$ module $(E, \mathcal{P})$ such that $E$ has the countable concatenation property, then $(E, \mathcal{P})$ is $L^0$--pre--barrelled iff $\mathcal{T}_c = \beta (E, E^{\ast}_c)$, where $\beta (E, E^{\ast}_c)$ is the strongest random admissible topology of $E$ with respect to the natural random duality pair $\langle E, E^{\ast}_c \rangle$, in particular a $\mathcal{T}_c$--complete random normed module $(E, \|\cdot\|)$ such that $E$ has the countable concatenation property is $L^0$--pre--barrelled. In addition, Guo, et.al also established the following continuity theorem:

\begin{theorem}\label{the:3.1}
\cite{GZZ15b}. Let $(E, \mathcal{P})$ be an $L^0$--pre--barrelled $RLC$ module over $R$ with base $(\Omega, \mathcal{F}, \mu)$ such that $E$ has the countable concatenation property, and $f : E \to \bar{L}^0(\mathcal{F})$ a proper $\mathcal{T}_c$--lower semicontinuous $L^0$--convex function. Then $f$ is $\mathcal{T}_c$--continuous on $int(dom(f))$, namely $f$ is continuous from $(int(dom(f)), \mathcal{T}_c)$ to $(L^0(\mathcal{F}), \mathcal{T}_c)$, where $int(dom(f))$ stands for the $\mathcal{T}_c$--interior of $dom(f)$.
\end{theorem}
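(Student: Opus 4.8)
The plan is to transcribe the classical theorem---a proper lower semicontinuous convex function on a barrelled space is continuous on the interior of its effective domain---into the module setting, carrying out every estimate with $L^0$--scalars and within the stratification. Fix $x_0 \in int(dom(f))$. Replacing $f$ by $\tilde{f}(\cdot) = f(\cdot + x_0) - f(x_0)$, which is again proper, $L^0$--convex and $\mathcal{T}_c$--lower semicontinuous (translation by a fixed element is a $\mathcal{T}_c$--homeomorphism and $f(x_0) \in L^0(\mathcal{F})$ is finite since $x_0 \in dom(f)$ and $f$ is proper), I reduce to $x_0 = \theta$ and $f(\theta) = 0$, with $\theta \in int(dom(f))$. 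The candidate barrel is
\[
C = \{ x \in E\ |\ f(x) \leq 1 \} \cap \{ x \in E\ |\ f(-x) \leq 1 \}.
\]
The entire argument then consists in showing that $C$ is an $L^0$--barrel possessing the countable concatenation property, so that $L^0$--pre--barrelledness forces $C$ to be a $\mathcal{T}_c$--neighborhood of $\theta$, followed by upgrading the bound $f \leq 1$ near $\theta$ to $\mathcal{T}_c$--continuity.

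Four of the five defining properties of $C$ are routine. Since $f$ is $L^0$--convex, $\{ f \leq 1\}$ is $L^0$--convex, and $\{ x : f(-x) \leq 1\}$ is its image under the $L^0$--linear homeomorphism $x \mapsto -x$, so $C$ is $L^0$--convex. As $f$ is $\mathcal{T}_c$--lower semicontinuous, $\{ f \leq 1\}$ is $\mathcal{T}_c$--closed by Definition \ref{def:2.12}, and so is its image under $x \mapsto -x$, whence $C$ is $\mathcal{T}_c$--closed. Because $C$ is $L^0$--convex, symmetric and contains $\theta$, for $\xi \in L^0(\mathcal{F})$ with $|\xi| \leq 1$ and $x \in C$ I may write $\xi x = \xi^+ x + \xi^-(-x) + (1 - |\xi|)\theta$ with $\xi^+, \xi^- \in L^0_+(\mathcal{F})$ summing to $|\xi| \leq 1$, exhibiting $\xi x$ as an $L^0$--convex combination of $x, -x, \theta \in C$; hence $C$ is $L^0$--balanced. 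Finally, $C$ has the countable concatenation property: given $\{ x_n\} \subset C$ and a countable partition $\{ A_n\}$ of $\Omega$, the element $x = \sum_n \tilde{I}_{A_n} x_n$ exists in $E$, and the locality of $f$ gives $\tilde{I}_{A_n} f(x) = \tilde{I}_{A_n} f(x_n) \leq \tilde{I}_{A_n}\cdot 1$ for every $n$, so $f(x) \leq 1$ and likewise $f(-x) \leq 1$, i.e. $x \in C$.

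The crux---and the step I expect to be the main obstacle---is the $L^0$--absorbency of $C$, for this is where the interior hypothesis and the partially ordered scalar field $L^0(\mathcal{F})$ genuinely interact. Since $\theta \in int(dom(f))$, there is a $\mathcal{T}_c$--neighborhood $N_{\theta}(Q, \varepsilon) \subset dom(f)$, which is $L^0$--absorbent; so for a given $x \in E$ there is $\eta \in L^0_{++}(\mathcal{F})$ with $\pm\eta x \in dom(f)$, whence $f(\eta x)$ and $f(-\eta x)$ are finite elements of $L^0(\mathcal{F})$. The classical device of invoking continuity of a one--variable convex function is unavailable, so instead I use the $L^0$--convex estimate $f(t\eta x) \leq t f(\eta x) + (1 - t) f(\theta)$, valid for $0 \leq t \leq 1$ in $L^0(\mathcal{F})$, and choose the $L^0_{++}$ scalar $t := (1 + |f(\eta x)| + |f(-\eta x)|)^{-1}$, which forces $t(f(\pm\eta x) - f(\theta)) \leq 1$; setting $\eta' = t\eta \in L^0_{++}(\mathcal{F})$ gives $f(\pm\eta' x) \leq 1$, and a further $L^0$--convex interpolation between $\pm\eta' x$ and $\theta$, patched across $(\zeta \geq 0)$ and $(\zeta < 0)$ via the locality of $f$, yields $f(\zeta x) \leq 1$ and $f(-\zeta x) \leq 1$ for all $\zeta \in L^0(\mathcal{F})$ with $|\zeta| \leq \eta'$. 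Thus $\zeta x \in C$ for all such $\zeta$, so $C$ is $L^0$--absorbent. The whole difficulty is concentrated in selecting the $L^0_{++}$ scalar so as to exploit that $f$ is finite--valued on $dom(f)$, and in the sign--stratified patching.

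With $C$ an $L^0$--barrel having the countable concatenation property, $L^0$--pre--barrelledness gives that $C$ is a $\mathcal{T}_c$--neighborhood of $\theta$, so $N_{\theta}(Q, \delta) \subset C$ for some $\delta \in L^0_{++}(\mathcal{F})$ and $Q \in \mathcal{P}(F)$; in particular $f \leq 1$ on $N_{\theta}(Q, \delta)$. Recalling $f(\theta) = 0$, for $x \in N_{\theta}(Q, \delta)$ and $t \in L^0(\mathcal{F})$ with $0 \leq t \leq 1$ I obtain $f(tx) \leq t f(x) + (1 - t) f(\theta) \leq t$, and, since $N_{\theta}(Q, \delta)$ is $L^0$--balanced so that $-x$ lies in it as well, $0 = f(\theta) \leq \tfrac{1}{2} f(tx) + \tfrac{1}{2} f(-tx) \leq \tfrac{1}{2} f(tx) + \tfrac{1}{2} t$, whence $f(tx) \geq -t$; thus $|f(tx)| \leq t$ for every $x \in N_{\theta}(Q, \delta)$. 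As $t N_{\theta}(Q, \delta) = N_{\theta}(Q, t\delta)$ is a $\mathcal{T}_c$--neighborhood of $\theta$ for each $t \in L^0_{++}(\mathcal{F})$, this proves $\mathcal{T}_c$--continuity of $f$ at $\theta$; since $x_0 \in int(dom(f))$ was arbitrary, $f$ is $\mathcal{T}_c$--continuous on $int(dom(f))$, as claimed.
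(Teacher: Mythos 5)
The paper does not prove Theorem \ref{the:3.1} itself but quotes it from \cite{GZZ15b}, and your argument is the natural randomization of the classical barrel proof that the cited reference follows: the set $C=\{f\leq 1\}\cap\{x\,|\,f(-x)\leq 1\}$ is an $L^0$--barrel with the countable concatenation property, hence a $\mathcal{T}_c$--neighborhood of $\theta$ by $L^0$--pre--barrelledness, and boundedness above is then upgraded to continuity. Your proposal is correct, including the two genuinely module--theoretic points (the $L^0_{++}$--scalar $t=(1+|f(\eta x)|+|f(-\eta x)|)^{-1}$ replacing the classical one--variable continuity argument in the absorbency step, and the locality--based stratified patching), so no gap to report.
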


In the sequel, $int(dom(f))$ always denotes the $\mathcal{T}_c$--interior of $dom(f)$ for a proper function $f$ from an $RLC$ module over $R$ with base $(\Omega, \mathcal{F}, \mu)$ to $\bar{L}^0(\mathcal{F})$.

\begin{remark}\label{rem:3.2}
Let $(E, \mathcal{P})$ be an $RLC$ module over $R$ with base $(\Omega, \mathcal{F}, \mu)$ and $f : E \to \bar{L}^0(\mathcal{F})$ a proper $L^0$--convex function. In \cite{FKV09}, Filipovi\'c, et.al proved that the following three statements are equivalent : (i). $f$ is bounded above by some $\xi \in L^0(\mathcal{F})$ on a $\mathcal{T}_c$--neighborhood of some point $x_0$; (ii). $f$ is $\mathcal{T}_c$--continuous at $x_0$; (iii). $int(dom(f))$ is nonempty and $f$ is $\mathcal{T}_c$--continuous on $int(dom(f))$.
\end{remark}

For the sake of convenience, let us first give the following:
\begin{definition}\label{def:3.3}
Let $(E, \mathcal{P})$ be an $RLC$ module over $R$ with base $(\Omega, \mathcal{F}, \mu)$ and $f : E \to \bar{L}^0(\mathcal{F})$ a proper function. A sequence $\{ x_n\ |\ n \in N \}$ in $E$ is said to be convergent to $x \in E$ almost everywhere if $\{ \|x_n - x\|\ |\ n \in N \}$ converges to 0 almost everywhere for each $\|\cdot\| \in \mathcal{P}$. $f$ is said to be almost everywhere sequently continuous at $x_0 \in dom(f)$, if $\{ f(x_n)\ |\ n \in N \}$ is almost everywhere convergent to $f(x_0)$ for every sequence $\{ x_n\ |\ n \in N \}$ almost everywhere convergent to $x_0$. In addition, if $int(dom(f)) \neq \emptyset$ and $x_0 \in int(dom(f))$, f is said to be $L^0$--locally Lipschitzian at $x_0$, if there exist some $\mathcal{T}_c$--neighborhood $U$ of $x_0$, some $\xi \in L^0_{++}(\mathcal{F})$ and some $Q \in \mathcal{P}(F)$ such that $U \subset int(dom(f))$ and $|f(x) - f(y)| \leq \xi \|x -y\|_{Q}$ for all $x, y \in U$.
\end{definition}

\begin{theorem}\label{the:3.4}
Let $(E, \mathcal{P})$ be an $RLC$ module over $R$ with base $(\Omega, \mathcal{F}, \mu)$ and $f : E \to \bar{L}^0(\mathcal{F})$ a proper  $L^0$--convex function such that $f$ is $\mathcal{T}_c$--continuous at some point $x_0 \in int(dom(f))$. Then $f$ is $L^0$--locally Lipschitzian at $x_0$.
\end{theorem}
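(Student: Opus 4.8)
The plan is to adapt the classical argument that a convex function continuous (hence locally bounded) at an interior point is locally Lipschitz there, working throughout in the $L^0$-module setting so that all scalars live in $L^0(\mathcal{F})$ and all estimates are inequalities in the partial order $\leq$. First I would exploit the hypothesis of $\mathcal{T}_c$-continuity at $x_0$: by Remark \ref{rem:3.2}, $\mathcal{T}_c$-continuity at $x_0$ is equivalent to $f$ being bounded above by some $\xi \in L^0(\mathcal{F})$ on a $\mathcal{T}_c$-neighborhood of $x_0$. So I may fix some $Q \in \mathcal{P}(F)$ and some $\eta \in L^0_{++}(\mathcal{F})$ with $N = \{x : \|x - x_0\|_Q \leq 2\eta\} \subset int(dom(f))$ and $f \leq \xi$ on $N$. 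The next step is to upgrade the upper bound to a two-sided local bound: using $L^0$-convexity applied to the midpoint identity $x_0 = \tfrac12(x_0 + v) + \tfrac12(x_0 - v)$ for $v$ in the $L^0$-ball of radius $\eta$, one gets $f(x_0) \leq \tfrac12 f(x_0 + v) + \tfrac12 f(x_0 - v)$, whence $f(x_0 - v) \geq 2f(x_0) - f(x_0 + v) \geq 2f(x_0) - \xi$, giving a lower bound $\zeta := 2f(x_0) - \xi$ on the smaller ball. Thus $f$ is sandwiched between two elements of $L^0(\mathcal{F})$ on $U := \{x : \|x - x_0\|_Q \leq \eta\}$.

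The core of the argument is then the Lipschitz estimate on a possibly shrunken ball, say $U' = \{x : \|x - x_0\|_Q \leq \eta/2\}$. Given $x, y \in U'$ with $x \neq y$, the classical trick is to extend the segment from $x$ through $y$ to a point $w = y + \alpha(y - x)$ that still lies in $U$, where the $L^0$-valued extension coefficient $\alpha$ is chosen as $\alpha = \eta / \|y - x\|_Q$ (with the usual convention handling the set where $\|y - x\|_Q = 0$). Writing $y$ as the $L^0$-convex combination $y = \lambda w + (1 - \lambda)x$ with $\lambda = \|y - x\|_Q / (\eta + \|y - x\|_Q) \in L^0_+(\mathcal{F})$, $0 \leq \lambda \leq 1$, convexity yields $f(y) - f(x) \leq \lambda(f(w) - f(x))$, and then substituting the upper bound $f(w) \leq \xi$ and the lower bound $f(x) \geq \zeta$ produces $f(y) - f(x) \leq \lambda(\xi - \zeta)$. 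Since $\lambda \leq \|y - x\|_Q / \eta$, this gives $f(y) - f(x) \leq \frac{\xi - \zeta}{\eta}\|y - x\|_Q$. By symmetry in $x$ and $y$ one obtains $|f(x) - f(y)| \leq \frac{\xi - \zeta}{\eta}\|x - y\|_Q$, which is exactly the $L^0$-locally Lipschitz condition of Definition \ref{def:3.3} with the constant $\frac{\xi - \zeta}{\eta} \in L^0_{++}(\mathcal{F})$.

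The main obstacle I anticipate is not the convexity bookkeeping but the careful stratification, i.e. handling the partial order correctly where the classical scalar argument divides by $\|y - x\|_Q$. On the $\mathcal{F}$-measurable set where $\|y - x\|_Q = 0$ the extension point $w$ and the coefficient $\alpha$ are undefined, so I would partition $\Omega$ (using the convention $\tfrac{0}{0}=0$ already adopted in the paper, and the local property of $f$) and argue $\tilde{I}_A$-componentwise, checking that both sides of every inequality respect the relation $\tilde{I}_A f(z) = \tilde{I}_A f(\tilde{I}_A z)$. A second technical point is verifying that the constructed $w$ genuinely lies in $U \subset int(dom(f))$ so that $f(w) \leq \xi$ applies — this requires the triangle inequality for the $L^0$-seminorm $\|\cdot\|_Q$ together with the radius choices, and it is the reason for passing to the shrunken ball $U'$. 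Once these measure-theoretic subtleties are dispatched, the estimate assembles exactly as in the real-valued case.
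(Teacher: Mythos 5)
Your overall strategy is the paper's: extend the segment from $x$ through $y$ to a point $w$ in a larger ball, write $y$ as an $L^0$--convex combination of $w$ and $x$, and feed in a two--sided local bound on $f$. (Your detour through Remark \ref{rem:3.2} and the midpoint trick to manufacture the lower bound $\zeta=2f(x_0)-\xi$ is harmless but unnecessary; the paper simply reads off $|f(x)-f(y)|\leq 1$ for $x,y\in x_0+U(Q,2\delta)$ directly from $\mathcal{T}_c$--continuity at $x_0$, which already gives the two--sided control.)

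The one genuine gap sits exactly at the point you flag as the main obstacle, and your proposed fix does not work as stated. The convention $\frac{0}{0}=0$ is of no help in defining $\alpha=\eta/\|y-x\|_Q$ on the stratum $A^c=(\|y-x\|_Q=0)$: there the numerator is $\eta>0$, not $0$. Worse, on $A^c$ you have $\lambda=0$, so for any choice of $w$ the right--hand side of $y=\lambda w+(1-\lambda)x$ reduces to $x$; since $\|\cdot\|_Q$ is only an $L^0$--seminorm, $\tilde{I}_{A^c}x$ and $\tilde{I}_{A^c}y$ need not coincide, so the convex--combination identity simply fails on $A^c$ and arguing $\tilde{I}_{A}$--componentwise leaves you with nothing there. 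You would still have to prove separately that $\tilde{I}_{A^c}f(x)=\tilde{I}_{A^c}f(y)$ --- which is true, e.g. because $t\mapsto f(x+t(y-x))$ is an $L^0$--convex function of $t\in L^0(\mathcal{F})$ which stays in the ball and is bounded above on $A^c$ for every $t$, hence constant there --- but this is an extra argument you have not supplied. The paper sidesteps the whole issue by regularizing: it sets $w_n=y+\frac{\delta}{\alpha+\frac{1}{n}}(y-z)$ with $\alpha=\|y-z\|_Q$, so that the coefficient lies in $L^0_{++}(\mathcal{F})$ for every $n$ and no stratification is needed, obtains $f(y)-f(z)\leq\frac{\alpha+\frac{1}{n}}{\delta}$ uniformly in $n$, and lets $n\to\infty$. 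If you adopt that device (or supply the one--dimensional convexity argument on $A^c$), your proof closes and is otherwise the same as the paper's.
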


\begin{proof}
Let us first recall that $\{ U(Q, \varepsilon)\ |\ Q \in \mathcal{P}(F)$ and $\varepsilon \in L^0_{++}(\mathcal{F}) \}$ form a local base at $\theta$ of $\mathcal{T}_c$, where $U(Q, \varepsilon) = \{ x \in E\ |\ \|x\|_{Q} \leq \varepsilon \}$, then $\{ x_0 + U(Q, \varepsilon)\ |\ Q \in \mathcal{P}(F)$ and $\varepsilon \in L^0_{++}(\mathcal{F}) \}$ forms a local base at $x_0$ of $\mathcal{T}_c$. Since $f$ is $\mathcal{T}_c$--continuous at $x_0 \in int(dom(f))$, there exist some $Q \in \mathcal{P}(F)$ and $\delta \in L^0_{++}(\mathcal{F})$ such that $|f(x) - f(y)| \leq 1$ whenever $x$ and $y \in x_0 + U(Q, 2 \delta)$.

Denote $V = x_0 + U(Q, \delta)$. For any $y$ and $z$ in $V$, let $\alpha = \|y - z\|_Q$ and $w_n = y + \frac{\delta}{\alpha + \frac{1}{n}}(y - z)$ for all $n \in N$. Then $\|w_n - y\|_Q = \frac{\delta}{\alpha + \frac{1}{n}}\|y - z\|_Q \leq \delta$, so $\|w_n - x_0\|_Q \leq \|w_n - y\|_Q + \|y - x_0\|_Q \leq 2\delta$, namely $w_n \in x_0 + U(Q, 2 \delta)$.

Further, since $y = \frac{\alpha + \frac{1}{n}}{\alpha + \frac{1}{n} + \delta}w_n + \frac{\delta}{\alpha + \frac{1}{n} + \delta}z$, $f(y) \leq \frac{\alpha + \frac{1}{n}}{\alpha + \frac{1}{n} + \delta}f(w_n) + \frac{\delta}{\alpha + \frac{1}{n} + \delta}f(z)$, which implies that $f(y) - f(z) \leq \frac{\alpha + \frac{1}{n}}{\alpha + \frac{1}{n} + \delta}[f(w_n) - f(z)] \leq \frac{\alpha + \frac{1}{n}}{\alpha + \frac{1}{n} + \delta} \leq \frac{\alpha + \frac{1}{n}}{\delta}$ for all $n \in N$. Letting $n \to \infty$ will yield that $f(y) - f(z) \leq \frac{\alpha}{\delta} = \frac{1}{\delta}\|y - z\|_Q$. Switching the roles of $y$ and $z$ allows us to conclude $|f(y) - f(z)| \leq \frac{1}{\delta}\|y - z\|_Q$.

This completes the proof.
\end{proof}

\begin{theorem}\label{the:3.5}
Let $(E, \mathcal{P})$ be an $RLC$ module over $R$ with base $(\Omega, \mathcal{F}, \mu)$ and $f : E \to \bar{L}^0(\mathcal{F})$ a proper $L^0$--convex function such that $f$ is $\mathcal{T}_c$--continuous at $x_0 \in int(dom(f))$. Then $f$ is almost everywhere sequently continuous at $x_0$.
\end{theorem}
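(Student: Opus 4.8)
The plan is to deduce the statement from the $L^0$-local Lipschitz estimate furnished by Theorem \ref{the:3.4}, the essential obstacle being the familiar mismatch between almost everywhere convergence and membership in a $\mathcal{T}_c$-neighborhood. Since $f$ is $\mathcal{T}_c$-continuous at $x_0$, Theorem \ref{the:3.4} gives a $\mathcal{T}_c$-neighborhood $U$ of $x_0$, some $\xi \in L^0_{++}(\mathcal{F})$ and some $Q \in \mathcal{P}(F)$ with $U \subset int(dom(f))$ and $|f(x) - f(y)| \leq \xi \|x - y\|_Q$ for all $x, y \in U$. First I would shrink $U$ to a basic ball: $U$ contains some $x_0 + U(Q_0, \eta)$ with $Q_0 \in \mathcal{P}(F)$ and $\eta \in L^0_{++}(\mathcal{F})$, and after replacing $Q$ by $Q \cup Q_0$ (which only enlarges $\|\cdot\|_Q$ and hence preserves the Lipschitz inequality) I may assume that $W := x_0 + U(Q, \eta) = \{ z \in E\ |\ \|z - x_0\|_Q \leq \eta \}$ satisfies $W \subseteq U$ and that $|f(x) - f(y)| \leq \xi \|x - y\|_Q$ for all $x, y \in W$. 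Since $W \subseteq int(dom(f))$ and $f$ is proper, $f$ is $L^0(\mathcal{F})$-valued on $W$, so all these differences make sense.

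Next, let $\{ x_n\ |\ n \in N \}$ converge to $x_0$ almost everywhere, so that $\|x_n - x_0\|_Q \to 0$ a.e. (the supremum defining $\|\cdot\|_Q$ runs over the finite set $Q$, so a.e.\ convergence is preserved). The key device is measurable truncation combined with the locality of $f$ (recall that an $L^0$-convex function is automatically local). For each $n$, set $A_n = (\|x_n - x_0\|_Q \leq \eta)$ and define $\hat{x}_n = \tilde{I}_{A_n} x_n + \tilde{I}_{A_n^c} x_0$. Then $\|\hat{x}_n - x_0\|_Q = \tilde{I}_{A_n}\|x_n - x_0\|_Q \leq \eta$ on $\Omega$, so $\hat{x}_n \in W$ and the Lipschitz estimate gives $|f(\hat{x}_n) - f(x_0)| \leq \xi \|\hat{x}_n - x_0\|_Q \leq \xi \|x_n - x_0\|_Q$. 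By locality, $\tilde{I}_{A_n} f(\hat{x}_n) = \tilde{I}_{A_n} f(\tilde{I}_{A_n}\hat{x}_n) = \tilde{I}_{A_n} f(x_n)$, whence $\tilde{I}_{A_n}|f(x_n) - f(x_0)| = \tilde{I}_{A_n}|f(\hat{x}_n) - f(x_0)| \leq \xi \|x_n - x_0\|_Q$ for every $n$.

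Finally I would argue pointwise. For almost every $\omega$, since $\|x_n - x_0\|_Q(\omega) \to 0$ while $\eta(\omega) > 0$, one has $\omega \in A_n$ for all sufficiently large $n$; for such $n$, $|f(x_n)(\omega) - f(x_0)(\omega)| \leq \xi(\omega) \|x_n - x_0\|_Q(\omega)$, and the right-hand side tends to $0$ because $\xi(\omega)$ is a fixed finite number. Hence $|f(x_n) - f(x_0)| \to 0$ a.e., i.e.\ $\{ f(x_n)\ |\ n \in N \}$ converges to $f(x_0)$ almost everywhere, which is exactly almost everywhere sequent continuity at $x_0$. I expect the only genuinely delicate point to be the passage from the Lipschitz estimate, valid only on the ball $W$, to a bound controlling the entire sequence; this is precisely what the truncation $\hat{x}_n$ together with the locality of $f$ resolves, after which the conclusion follows by an entirely elementary pointwise limit.
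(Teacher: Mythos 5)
Your proposal is correct and follows essentially the same route as the paper: invoke the local $L^0$--Lipschitz estimate from Theorem \ref{the:3.4} on a ball $x_0 + U(Q,\eta)$, extend it stratum-wise via the truncation $\tilde{I}_{A_n}x_n + \tilde{I}_{A_n^c}x_0$ and the locality of $f$, and conclude by an almost everywhere pointwise limit. The only cosmetic difference is that the paper packages the final step as the identity $\mu(\bigcap_k\bigcup_n\bigcap_{l\ge n}(|f(x_l)-f(x_0)|\le \varepsilon_k))=1$ with $\varepsilon_k=\delta_k/\delta$, which is exactly your pointwise argument.
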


\begin{proof}
Without loss of generality, we can assume that $(\Omega, \mathcal{F}, \mu)$ is a probability space. Let$\{ x_n\ |\ n \in N \}$ be a sequence almost everywhere convergent to $x_0$, we only need to prove that there exists a sequence $\{ \varepsilon_k\ |\ k \in N \}$ in $L^0_{++}(\mathcal{F})$ such that $\varepsilon_k \downarrow 0$ and $\mu(\bigcap^{\infty}_{k=1} \bigcup^{\infty}_{n=1} \bigcap_{l \geq n} \{ \omega \in \Omega\ |\ |f(x_l)(\omega) - f(x_0)(\omega)| \leq \varepsilon_{k}(\omega) \}) = 1$.

According to the proof of Theorem \ref{the:3.4}, there exist some $Q \in \mathcal{P}(F)$ and $\delta \in L^0_{++}(\mathcal{F})$ such that $|f(y) - f(z)| \leq \frac{1}{\delta}\|y - z\|_Q$ whenever $y ,z \in x_0 + U(Q, \delta)$, in particular $|f(y) - f(x_0)| \leq \frac{1}{\delta}\|y - x_0\|_Q$ for all $y$ such that $\|y - x_0\|_Q \leq \delta$. We will show that $(\|y - x_0\|_Q \leq \delta) \subset (|f(y) - f(x_0)| \leq \frac{1}{\delta}\|y - x_0\|_Q)$ for all $y \in E$ as follows.

In fact, let $A = (\|y - x_0\|_Q \leq \delta)$ and $\tilde{y} = \tilde{I}_{A}y + \tilde{I}_{A^c}x_0$, then $\| \tilde{y} - x_0\|_Q = \tilde{I}_{A}\|y - x_0\|_Q + \tilde{I}_{A^c}\|x_0 -x_0\|_Q = \tilde{I}_{A}\|y - x_0\|_Q \leq \delta$, so $|f(\tilde{y}) - f(x_0)| \leq \frac{1}{\delta}\| \tilde{y} - x_0\|$. Since $f$ is local, $f(\tilde{y}) = \tilde{I}_{A}f(y) + \tilde{I}_{A^c}f(x_0)$, then one can have that $\tilde{I}_{A}|f(y) - f(x_0)| \leq \frac{1}{\delta} \tilde{I}_{A}\|y - x_0\|_Q$, which implies that $A \subset (|f(y) - f(x_0)| \leq \frac{1}{\delta}\|y - x_0\|_Q)$.

Let $\delta_k = \delta \wedge \frac{1}{k}$ for all $k \in N$, then $\delta_k \in L^0_{++}(\mathcal{F})$, since $\mu(\bigcap^{\infty}_{k=1} \bigcup^{\infty}_{n=1} \bigcap_{l \geq n} \{ \|x_l(\omega) - x_0(\omega)\|_Q \leq \delta_k(\omega) \}) = 1$ and $(\|x_l - x_0\|_Q \leq \delta_k) \subset (|f(x_l) - f(x_0)| \leq \frac{1}{\delta}\|x_l - x_0\|_Q) \subset (|f(x_l) - f(x_0)| \leq \frac{1}{\delta} \cdot \delta_k)$ for all $k$ and $l \in N$, we have that $\mu(\bigcap^{\infty}_{k=1} \bigcup^{\infty}_{n=1} \bigcap_{l \geq n} \{ |f(x_l)\\(\omega) - f(x_0)(\omega)| \leq \varepsilon_{k}(\omega) \}) = 1$, where $\varepsilon_k = \frac{\delta_k}{\delta}$.

This completes the proof.
\end{proof}

\begin{theorem}\label{the:3.6}
Let $(E, \|\cdot\|)$ be a $\mathcal{T}_c$--complete $RN$ module over $R$ with base $(\Omega, \mathcal{F}, \mu)$ such that $E$ has the countable concatenation property and $f : E \to L^0(\mathcal{F})$ an $L^0$--convex function. Then the following are equivalent:\\
$(1)$ $f$ is $\mathcal{T}_c$--lower semicontinuous;\\
$(2)$ $f$ is $\mathcal{T}_c$--continuous;\\
$(3)$ $f$ is almost everywhere sequently continuous;\\
$(4)$ $f$ is almost everywhere sequently lower semicontinuous, namely $\underline{lim}_{n}f(x_n) \geq f(x)$ for any sequence
$\{ x_n\ |\ n \in N \}$ such that $\{ \|x_n -x\|\ |\ n \in N \}$ converges to 0 almost everywhere;\\
$(5)$ $f$ is $\mathcal{T}_{\varepsilon,\lambda}$--continuous;\\
$(6)$ $f$ is $\mathcal{T}_{\varepsilon,\lambda}$--lower semicontinuous.
\end{theorem}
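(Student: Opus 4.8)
The plan is to prove the six statements equivalent by running a single cycle of implications that exploits the three substantial results already available --- the lower semicontinuity equivalence of Theorem \ref{the:2.13}, the continuity theorem Theorem \ref{the:3.1}, and the almost sure sequential continuity of Theorem \ref{the:3.5} --- together with the metrizability of the $(\varepsilon,\lambda)$-topology on an $RN$ module. First I would record the standing simplifications: since $f$ is $L^0(\mathcal{F})$-valued it is automatically proper with $dom(f)=E$, so that $int(dom(f))=E$; it is $L^0$-convex and hence local; and a $\mathcal{T}_c$-complete $RN$ module whose underlying module has the countable concatenation property is $L^0$-pre-barrelled, so Theorem \ref{the:3.1} applies. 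As in the proof of Theorem \ref{the:3.5}, I would also reduce to the case that $(\Omega,\mathcal{F},\mu)$ is a probability space by passing to $\hat\mu$.

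The cycle I would establish is $(1)\Rightarrow(2)\Rightarrow(3)\Rightarrow(5)\Rightarrow(6)\Rightarrow(1)$, together with $(3)\Rightarrow(4)\Rightarrow(6)$ to fold in statement $(4)$. The implication $(1)\Rightarrow(2)$ is immediate from Theorem \ref{the:3.1} because $int(dom(f))=E$; $(2)\Rightarrow(3)$ is Theorem \ref{the:3.5} applied at each point of $int(dom(f))=E$; $(3)\Rightarrow(4)$ is trivial, since almost sure convergence of $\{f(x_n)\}$ to $f(x)$ forces $\underline{lim}_{n}f(x_n)=f(x)$; and $(5)\Rightarrow(6)$ follows because $\mathcal{T}_{\varepsilon,\lambda}$-continuity of $f$ makes $(x,r)\mapsto(f(x),r)$ continuous while the set $\{(\xi,r):\xi\le r\}$ is $\mathcal{T}_{\varepsilon,\lambda}$-closed in $L^0(\mathcal{F})\times L^0(\mathcal{F})$, so $epi(f)$ is closed. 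Finally $(6)\Rightarrow(1)$ is exactly Theorem \ref{the:2.13}, where $\mathcal{P}=\{\|\cdot\|\}$ trivially has the countable concatenation property.

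The two implications that require genuine work are $(3)\Rightarrow(5)$ and $(4)\Rightarrow(6)$, and both rest on the same device: the $(\varepsilon,\lambda)$-topology on an $RN$ module is metrizable, so a set is closed (a map is continuous) iff it is sequentially closed (sequentially continuous), and $\mathcal{T}_{\varepsilon,\lambda}$-convergence is exactly convergence in measure, which is characterized by the property that every subsequence admits a further subsequence converging almost everywhere. For $(3)\Rightarrow(5)$ I would take $x_n\to x$ in $\mathcal{T}_{\varepsilon,\lambda}$ and, to prove $f(x_n)\to f(x)$ in measure, argue by contradiction: pass to a subsequence on which $\{f(x_{n_k})\}$ stays bounded away from $f(x)$, extract from it a further subsequence with $\|x_{n_{k_j}}-x\|\to 0$ almost everywhere, and invoke $(3)$ to get $f(x_{n_{k_j}})\to f(x)$ almost everywhere, hence in measure, a contradiction. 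For $(4)\Rightarrow(6)$ I would show $epi(f)$ is sequentially $\mathcal{T}_{\varepsilon,\lambda}$-closed: given $(x_n,r_n)\in epi(f)$ with $x_n\to x$ and $r_n\to r$ in measure, pass to a subsequence with $\|x_{n_k}-x\|\to0$ and $r_{n_k}\to r$ almost everywhere, so that $(4)$ gives $f(x)\le\underline{lim}_k f(x_{n_k})\le\underline{lim}_k r_{n_k}=r$, whence $(x,r)\in epi(f)$.

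The main obstacle I anticipate is technical rather than conceptual: making the subsequence extractions compatible with the $\sigma$-finite base (handled by the reduction to $\hat\mu$) and correctly matching the essential $\underline{lim}$ of the paper with the pointwise $\liminf$ along almost everywhere convergent subsequences. Once the metrizability of $\mathcal{T}_{\varepsilon,\lambda}$ and the subsequence characterization of convergence in measure are in hand, the remaining steps are routine, and the genuinely deep content is imported wholesale from Theorems \ref{the:2.13}, \ref{the:3.1} and \ref{the:3.5}.
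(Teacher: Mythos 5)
Your proposal is correct and follows essentially the same route as the paper: the identical cycle $(1)\Rightarrow(2)\Rightarrow(3)\Rightarrow(5)\Rightarrow(6)\Rightarrow(1)$ with $(3)\Rightarrow(4)\Rightarrow(6)$, importing $(1)\Rightarrow(2)$ from Theorem \ref{the:3.1}, $(2)\Rightarrow(3)$ from Theorem \ref{the:3.5}, $(6)\Rightarrow(1)$ from Theorem \ref{the:2.13}, and handling $(3)\Rightarrow(5)$ by the same subsequence/almost-everywhere-convergence principle for convergence in measure. The only difference is cosmetic: you spell out $(4)\Rightarrow(6)$ and $(5)\Rightarrow(6)$ via metrizability and sequential closedness of $epi(f)$, which the paper dismisses as obvious.
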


\begin{proof}
We can, without loss of generality, assume that $\mu(\Omega) = 1$. Since $dom(f) = E$, (1) implies (2) by Theorem \ref{the:3.1}.

$(2)$ $\Rightarrow$ $(3)$ is by Theorem \ref{the:3.5}.

$(3)$ $\Rightarrow$ $(4)$ is clear.

$(3)$ $\Rightarrow$ $(5)$: let $\{ x_n\ |\ n \in N \}$ be a sequence convergent to $x$ with respect to $\mathcal{T}_{\varepsilon,\lambda}$, namely $\{ \|x_n - x\|\ |\ n \in N \}$ converges to 0 in probability $\mu$, we only need to prove that for any subsequence $\{ f(x_{n_k})\ |\ k \in N \}$ of $\{ f(x_n)\ |\ n \in N \}$ there exists a subsequence $\{ f(x_{n_{k_{l}}})\ |\ l \in N \}$ such that $\{ |f(x_{n_{k_{l}}}) - f(x)|\ |\ l \in N \}$ converges to 0 almost surely. In fact, since $\{ \|x_{n_{k}} - x\|\ |\ k \in N \}$ still converges to 0 in probability $\mu$, there exists a subsequence $\{x_{n_{k_{l}}}\ |\ l \in N \}$ such that $\{ \|x_{n_{k_{l}}} - x\|\ |\ l \in N \}$ converges to 0 almost surely, which means that $\{ |f(x_{n_{k_{l}}}) - f(x)|\ |\ l \in N \}$ converges to 0 almost surely by the almost surely sequent continuity of $f$.

It is obvious that either of (4) and (5) implies (6).

$(6)$ $\Rightarrow$ (1) is by Theorem \ref{the:2.13}.

This completes the proof.
\end{proof}

\begin{remark}\label{rem:3.7}
In Theorem \ref{the:3.6}, when $f$ is only a proper and local function, one can also have that (4) implies (6) (equivalently, (1)). Thus the almost everywhere sequently lower semicontinuity employed in \cite{CKV12,FKV12} is a stronger hypothesis.
\end{remark}

\section{Subdifferential calculus}\label{sec:4}
Let $(E, \mathcal{P})$ be an $RLC$ module over $R$ with base $(\Omega, \mathcal{F}, \mu)$ and $f : E \to \bar{L}^0(\mathcal{F})$ a proper function. Let $x_0 \in dom(f)$, $g \in E^{\ast}_c$ is called a subgradient
of $f$ at $x_0$ if $g(x - x_0) \leq f(x) - f(x_0)$ for all $x \in E$. Denote by $\partial f(x_0)$ the set of subgradients of $f$ at $x_0$, $\partial f(x_0)$ is called the subdifferential of $f$ at $x_0$. If $\partial f(x_0) \neq \emptyset$, $f$ is said to be subdifferentiable at $x_0$.

Guo, et.al established the following subdifferentiability theorem in \cite{GZZ15b}.
\begin{proposition}\label{pro:4.1}
\cite{GZZ15b}. Let $(E, \mathcal{P})$ be an $L^0$--pre--barrelled $RLC$ module over $R$ with base $(\Omega, \mathcal{F}, \mu)$ such that $E$ has the countable concatenation property and $f : E \to \bar{L}^0(\mathcal{F})$ a proper $\mathcal{T}_c$--lower semicontinuous $L^0$--convex function. Then $\partial f(x) \neq \emptyset$ for all $x \in int(dom(f))$.
\end{proposition}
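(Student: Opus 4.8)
The plan is to realise a subgradient at $x_0$ as a non-vertical supporting hyperplane of $epi(f)$ at the boundary point $(x_0, f(x_0))$, exactly as in the classical proof, but carried out stratum by stratum because the partial order on $L^0(\mathcal{F})$ prevents a single hyperplane from being non-vertical on all of $\Omega$ at once. First I would record the two facts that make $x_0$ a point of good behaviour: since $(E,\mathcal{P})$ is $L^0$--pre--barrelled and $f$ is proper, $\mathcal{T}_c$--lower semicontinuous and $L^0$--convex, Theorem \ref{the:3.1} gives that $f$ is $\mathcal{T}_c$--continuous on $int(dom(f))$, and then Theorem \ref{the:3.4} upgrades this to $L^0$--local Lipschitzianity at $x_0$: there are $Q \in \mathcal{P}(F)$, $\xi \in L^0_{++}(\mathcal{F})$ and $\delta \in L^0_{++}(\mathcal{F})$ with $x_0 + U(Q,\delta) \subset int(dom(f))$ and $|f(x)-f(y)| \leq \xi\|x-y\|_Q$ on $x_0 + U(Q,\delta)$. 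Continuity makes $int(epi(f))$ a nonempty $\mathcal{T}_c$--open $L^0$--convex subset of the product $RLC$ module $E \times L^0(\mathcal{F})$ (with $L^0$--seminorms $\|\cdot\| + |\cdot|$), and $(x_0,f(x_0))$ does not belong to it.

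\textbf{Local step.} For each $A \in \mathcal{F}$ with $\mu(A) > 0$ I would separate, inside the restricted module $E_A \times L^0(A \cap \mathcal{F})$, the boundary point from $int(epi(f_A))$ using the Filipovi\'c--Kupper--Vogelpoth hyperplane theorem for two $L^0$--convex sets one of which is open. This produces a nonzero continuous homomorphism $(g,s) \in (E_A)^{\ast}_c \times L^0(A \cap \mathcal{F})$ with $g(x_0) + s\,f(x_0) \leq g(z) + s\,r$ for all $(z,r) \in int(epi(f_A))$; since the epigraph is upward closed, letting $r \uparrow +\infty$ forces $s \geq 0$. The crucial verticality exclusion uses Lipschitzianity: on the set $(s=0)$ the inequality reads $\tilde{I}_{(s=0)}g(z) \geq \tilde{I}_{(s=0)}g(x_0)$ for all $z$ in a $\mathcal{T}_c$--neighbourhood of $x_0$, and since $U(Q,\delta)$ is $L^0$--balanced and $L^0$--absorbent this forces $\tilde{I}_{(s=0)}g = 0$, whence $(g,s) = 0$ on $(s=0)$. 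As $(g,s) \neq 0$, the set $B := (s>0)$ has positive measure, and there $\gamma_B := -g/s$ is a \emph{genuine} subgradient of $f$ on $B$; the Lipschitz estimate together with $L^0$--balancedness yields the uniform bound $|\gamma_B(\tilde{I}_B y)| \leq \xi\|\tilde{I}_B y\|_Q$ for all $y \in E$.

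\textbf{Global step and conclusion.} Feeding an arbitrary leftover set into the local step shows that the essential supremum of the measurable sets carrying a subgradient of $f$ bounded by $\xi\|\cdot\|_Q$ is all of $\Omega$; realising this essential supremum by a countable family and disjointifying yields a countable partition $\{A_n : n \in N\}$ of $\Omega$ together with subgradients $\gamma_n$ on $A_n$, each satisfying the same bound. Because $E$ has the countable concatenation property I can form $g := \sum_{n=1}^{\infty}\tilde{I}_{A_n}\gamma_n$; the local property of $f$ turns the piecewise inequalities into $g(x-x_0) \leq f(x)-f(x_0)$ for every $x \in E$, and the uniform estimate $|g(y)| \leq \xi\|y\|_Q$ shows that $g$ is $\mathcal{T}_c$--continuous, i.e. $g \in E^{\ast}_c$. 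Hence $g \in \partial f(x_0)$, so $\partial f(x_0) \neq \emptyset$. I expect the verticality exclusion and the attendant local--to--global passage to be the main obstacle: it is precisely here that the partial order and the stratification structure of $L^0(\mathcal{F})$ intervene, and it is the uniform Lipschitz bound -- available only through Theorems \ref{the:3.1} and \ref{the:3.4} -- that keeps the concatenated functional inside $E^{\ast}_c$ rather than merely in $E^{\ast}_{\varepsilon,\lambda}$.
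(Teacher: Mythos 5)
Your argument is correct, but it takes a genuinely longer road than the paper's. The paper (see the sentence following the statement, and the proof of Theorem \ref{the:4.3}, which runs the identical argument for the sum rule) gets the separating functional in a single stroke from Proposition \ref{pro:4.2}: because $f$ is local, the stratified disjointness condition $\tilde{I}_{A}\{(x_0,f(x_0))\}\bigcap\tilde{I}_{A}\,int(epi(f))=\emptyset$ holds for \emph{every} $A\in\mathcal{F}$ with $\mu(A)>0$, so Proposition \ref{pro:4.2} yields $(g,s)\in E^{\ast}_{c}\times L^{0}(\mathcal{F})$ with $g(x_0)+sf(x_0)>g(z)+sr$ \emph{on all of} $\Omega$ for every $(z,r)\in int(epi(f))$; letting $r$ grow forces $s\leq 0$, and plugging in $(x_0,f(x_0)+1)\in int(epi(f))$ (available by Theorem \ref{the:3.1}) gives $s<0$ on $\Omega$ at once, whence $-g/s\in\partial f(x_0)$ after passing from $int(epi(f))$ to its closure. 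Your stratumwise separation, the verticality exclusion on $(s=0)$ via $L^{0}$--balancedness and $L^{0}$--absorbency, the exhaustion by essential supremum and the concatenation under the uniform bound $\xi\|\cdot\|_{Q}$ are all sound -- in effect you re-prove by hand, for this particular pair of sets, the ``on $\Omega$'' strength that Proposition \ref{pro:4.2} already packages. What your route buys is an explicit quantitative estimate $\|g\|\leq\xi$ on the subgradient and independence from the full strength of Proposition \ref{pro:4.2}; what it costs is the entire local--to--global machinery, plus a dependence on Theorem \ref{the:3.4} that the one--shot argument does not need (mere continuity at $x_0$ excludes verticality, and even in your version the bound $|f(\cdot)-f(x_0)|\leq 1$ on a neighbourhood already gives $|\gamma_{B}(y)|\leq\delta^{-1}\|y\|_{Q}$ by scaling and balancedness). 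One small misattribution: forming $g=\sum_{n=1}^{\infty}\tilde{I}_{A_n}\gamma_{n}$ does not use the countable concatenation property of $E$ -- the sum is a pointwise-defined module homomorphism whose membership in $E^{\ast}_{c}$ follows from the uniform bound; the countable concatenation hypothesis enters only through Theorem \ref{the:3.1} and the $L^{0}$--pre--barrelledness.
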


The proof of Proposition \ref{pro:4.1} used Proposition \ref{pro:4.2} below, which is stated as follows since it is frequently used in this paper.

Proposition \ref{pro:4.2} below is also true for any Hausdorff locally $L^{0}-$convex module.

\begin{proposition} \cite{FKV09}. \label{pro:4.2}
Let $(E,\mathcal{P})$ be an $RLC$ module over $K$ with base $(\Omega,\mathcal{F},\mu)$, $M$ and $G$ two nonempty $L^{0}-$convex subsets of $E$ with $G$ $\mathcal{T}_{c}-$open. If ${\tilde{I}_{A}M}\bigcap{\tilde{I}_{A}G}=\emptyset$ for all $A\in\mathcal{F}$ such that $\mu(A)>0$, then there exists $f\in E_{c}^{*}$ such that $Ref(x)>Ref(y)$ on $\Omega$ for all $x\in M$ and $y\in G$.
\end{proposition}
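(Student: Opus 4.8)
The plan is to reduce the two--set problem to a point--versus--open--set separation and, because neither $M$ nor $G$ is assumed $\mathcal{T}_c$--closed (so that the closed--set separation of Corollary \ref{cor:2.18} is not directly available), to run the classical gauge plus Hahn--Banach argument carried out over $L^0(\mathcal{F},K)$. First I would form the $L^0$--convex set $C:=M-G=\{x-y\ |\ x\in M,\ y\in G\}$. Since $M$ and $G$ are $L^0$--convex, $C$ is $L^0$--convex, and since $G$ is $\mathcal{T}_c$--open and each map $z\mapsto x-z$ is a $\mathcal{T}_c$--homeomorphism, $C=\bigcup_{x\in M}(x-G)$ is $\mathcal{T}_c$--open. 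The hypothesis $\tilde{I}_{A}M\bigcap\tilde{I}_{A}G=\emptyset$ for all $A\in\mathcal{F}$ with $\mu(A)>0$ translates exactly into $\theta\notin\tilde{I}_{A}C$ for all such $A$: if $\theta=\tilde{I}_{A}(x-y)$ for some $x\in M$, $y\in G$, then $\tilde{I}_{A}x=\tilde{I}_{A}y$, contradicting the disjointness. Thus it suffices to produce $f\in E^{*}_c$ with $Ref(c)<0$ on $\Omega$ for every $c\in C$; applying this with $c=x-y$ gives $Ref(x)<Ref(y)$ on $\Omega$, and replacing $f$ by $-f$ yields the asserted $Ref(x)>Ref(y)$ on $\Omega$ for all $x\in M$ and $y\in G$.

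Next I would fix any $c_0\in C$ (possible since $M,G\neq\emptyset$) and set $D:=C-c_0$, an $L^0$--convex $\mathcal{T}_c$--open neighborhood of $\theta$; being a $\mathcal{T}_c$--neighborhood of $\theta$, $D$ is $L^0$--absorbent, so its Minkowski gauge $p_D(x):=\bigwedge\{\lambda\in L^0_{++}(\mathcal{F})\ |\ x\in\lambda D\}$ is a well--defined $L^0$--sublinear functional (positively $L^0$--homogeneous and $L^0$--subadditive). The openness of $D$ enters through the gauge identity $D=\{x\in E\ |\ p_D(x)<1$ on $\Omega\}$. From the stratified statement $\theta\notin\tilde{I}_{A}C$ one gets $\tilde{I}_{A}(-c_0)\notin\tilde{I}_{A}D$ for all $A$ with $\mu(A)>0$, whence $p_D(-c_0)\geq 1$ on $\Omega$ (if $p_D(-c_0)<1$ on some $A$ of positive measure, the gauge identity restricted to $\tilde{I}_{A}$ would force $\tilde{I}_{A}(-c_0)\in\tilde{I}_{A}D$). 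I then define, on the $L^0(\mathcal{F})$--submodule $L^0(\mathcal{F})\cdot c_0$, the map $\ell_0(\xi c_0):=-\xi\,p_D(-c_0)$; using $\xi=\xi^{+}-\xi^{-}$ and the homogeneity of $p_D$ one checks $\ell_0\leq p_D$ on this submodule, with $\ell_0(-c_0)=p_D(-c_0)$.

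The engine of the proof is the $L^0$--module Hahn--Banach extension theorem, which extends $\ell_0$ to an $L^0(\mathcal{F})$--linear map $\ell:E\to L^0(\mathcal{F})$ with $\ell\leq p_D$ on all of $E$. Domination by the gauge, which is bounded near $\theta$, forces $\ell$ to be bounded on $D$ and hence $\mathcal{T}_c$--continuous, so $\ell\in E^{*}_c$ in the real case; in the complex case I recover the homomorphism via $f(x):=\ell(x)-i\,\ell(ix)$, which is $L^0(\mathcal{F},C)$--linear with $Ref=\ell$. Finally, for $c\in C$ I write $c=c_0+(c-c_0)$ with $c-c_0\in D$, so that $\ell(c)=\ell(c_0)+\ell(c-c_0)\leq -p_D(-c_0)+p_D(c-c_0)\leq -1+p_D(c-c_0)<0$ on $\Omega$, using $\ell(c_0)=-\ell(-c_0)\leq -p_D(-c_0)\leq -1$ and the strict bound $p_D(c-c_0)<1$ on $\Omega$ coming from $c-c_0\in D$. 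This is exactly the bound $Ref(c)<0$ promised in the reduction.

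The main obstacle I anticipate is not the algebra but the everywhere--on--$\Omega$ \emph{strictness} of the separation: classically strict inequality is automatic once the separating functional is nonzero, whereas here one must rule out any positive--measure set on which equality could occur. This is precisely where the full strength of the hypothesis is consumed --- the disjointness is assumed for \emph{every} $A$ with $\mu(A)>0$, not merely for $A=\Omega$ --- first to upgrade $\theta\notin C$ to $p_D(-c_0)\geq 1$ on $\Omega$, and then, through the gauge characterization of $\mathcal{T}_c$--open sets, to force $p_D(c-c_0)<1$ on all of $\Omega$. A secondary point requiring care is verifying that the $L^0$--module Hahn--Banach theorem applies to the $L^0$--sublinear gauge $p_D$ and delivers a genuine module homomorphism; since the entire argument uses only the locally $L^0$--convex structure, it also accounts for the remark that Proposition \ref{pro:4.2} holds in any Hausdorff locally $L^0$--convex module.
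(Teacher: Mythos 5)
Your argument is correct and is essentially the original proof of this result in \cite{FKV09} --- note that the present paper only cites the statement and supplies no proof of its own: one reduces to separating $\theta$ from the $\mathcal{T}_{c}$--open $L^{0}$--convex set $M-G$, forms the gauge $p_{D}$ of $D=M-G-c_{0}$, and extends $\xi c_{0}\mapsto -\xi p_{D}(-c_{0})$ by the $L^{0}$--module Hahn--Banach theorem. The only delicate point is the gauge identity $D=\{x\in E\ |\ p_{D}(x)<1\ \mbox{on}\ \Omega\}$, whose inclusion ``$\supset$'' is problematic for a general $L^{0}$--convex set (it is one of the places where countable concatenation issues arise in \cite{FKV09}); fortunately your proof only consumes the two one--sided localized implications --- membership in the open set $D$ forces $p_{D}<1$ on $\Omega$, and $p_{D}(-c_{0})<1$ on a set of positive measure would produce some $B$ with $\mu(B)>0$ and $\tilde{I}_{B}(-c_{0})\in \tilde{I}_{B}D$ --- both of which do hold.
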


Generally, $\partial F_{1}(u)+\partial F_{2}(u)\subset \partial (F_{1}+F_{2})(u)$ for all $u\in E$. Conversely, we have the following:

\begin{theorem}\label{the:4.3}
Let $(E,\mathcal{P})$ be an $RLC$ module over $R$ with base $(\Omega,\mathcal{F},\mu)$, $F_{1},F_{2}:E\rightarrow \bar{L}^0(\mathcal{F})$ two proper $\mathcal{T}_{c}-$lower semicontinuous $L^{0}-$convex functions and $\bar{u}$ a point in $dom(F_{1})\bigcap dom(F_{2})$ such that $F_{1}$ is $\mathcal{T}_{c}-$continuous at $\bar{u}$. Then $\partial (F_{1}+F_{2})(u)=\partial F_{1}(u)+\partial F_{2}(u)$ for all $u\in E$.
\end{theorem}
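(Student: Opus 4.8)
The inclusion $\partial F_{1}(u)+\partial F_{2}(u)\subseteq\partial(F_{1}+F_{2})(u)$ being already observed before the statement, the plan is to prove the reverse inclusion. Since both sides are empty when $u\notin dom(F_{1})\cap dom(F_{2})=dom(F_{1}+F_{2})$, I would fix $u\in dom(F_{1})\cap dom(F_{2})$ and an arbitrary $g\in\partial(F_{1}+F_{2})(u)$ and manufacture $g_{1}\in\partial F_{1}(u)$ and $g_{2}\in\partial F_{2}(u)$ with $g=g_{1}+g_{2}$. Introduce the auxiliary functions $\phi(x)=F_{1}(x)-F_{1}(u)-g(x-u)$ and $\psi(x)=F_{2}(u)-F_{2}(x)$; then $\phi$ is $L^{0}$--convex and local, $\psi$ is $L^{0}$--concave and local, $\phi(u)=\psi(u)=0$, and the defining inequality for $g$ is exactly $\phi\geq\psi$ on $E$. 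Because $F_{1}$ is $\mathcal{T}_{c}$--continuous at $\bar{u}$ and the remaining terms of $\phi$ are $\mathcal{T}_{c}$--continuous affine, $\phi$ is $\mathcal{T}_{c}$--continuous at $\bar{u}$; by Remark \ref{rem:3.2}, $\bar{u}\in int(dom(\phi))=int(dom(F_{1}))$ and $\phi$ is $\mathcal{T}_{c}$--continuous on $int(dom(\phi))$. The whole matter then reduces to a sandwich statement: find a $\mathcal{T}_{c}$--continuous affine function $a$ with $\psi\leq a\leq\phi$ and $a(u)=0$. Indeed, writing such an $a$ as $a(x)=g_{0}(x-u)$ with $g_{0}\in E^{\ast}_{c}$, the inequality $a\leq\phi$ reads $g_{0}+g\in\partial F_{1}(u)$, the inequality $a\geq\psi$ reads $-g_{0}\in\partial F_{2}(u)$, and $(g_{0}+g)+(-g_{0})=g$.

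To produce $a$ I would separate, in the product $RLC$ module $E\times L^{0}(\mathcal{F})$ (with family $\{\|\cdot\|+|\cdot|:\|\cdot\|\in\mathcal{P}\}$), the two $L^{0}$--convex sets $G=\{(x,r):x\in int(dom(\phi))\text{ and }\phi(x)<r\text{ on }\Omega\}$ and $M=\{(x,r):F_{2}(x)+r\leq F_{2}(u)\}$. Here $G$ is the strict epigraph of $\phi$ over the $\mathcal{T}_{c}$--open set $int(dom(\phi))$, hence $\mathcal{T}_{c}$--open, $L^{0}$--convex and nonempty (it contains $(\bar{u},\phi(\bar{u})+1)$), while $M$ is the region below the graph of $\psi$, which is $L^{0}$--convex and nonempty (it contains $(\bar{u},F_{2}(u)-F_{2}(\bar{u}))$, using $\bar{u}\in dom(F_{2})$). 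Using the strict inequality defining $G$, the relation $\phi\geq\psi$, and the locality of $\phi$ and $F_{2}$, I would verify the strong disjointness $\tilde{I}_{A}G\cap\tilde{I}_{A}M=\emptyset$ for every $A\in\mathcal{F}$ with $\mu(A)>0$: a common $\tilde{I}_{A}$--point would yield $\tilde{I}_{A}\psi<\tilde{I}_{A}\psi$ on $A$. Proposition \ref{pro:4.2} (with $K=R$) then furnishes a nonzero $(h,s)\in(E\times L^{0}(\mathcal{F}))^{\ast}_{c}=E^{\ast}_{c}\times L^{0}(\mathcal{F})$, using $(L^{0}(\mathcal{F}))^{\ast}_{c}=L^{0}(\mathcal{F})$, such that $h(x)+sr>h(x')+sr'$ on $\Omega$ for all $(x,r)\in M$ and $(x',r')\in G$.

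The decisive step is to show $s<0$ on $\Omega$. Letting $r\to-\infty$ inside $M$ (take $r=r_{0}-n$) forces $s\leq 0$. If $A_{0}:=(s=0)$ had $\mu(A_{0})>0$, then choosing $(\bar{u},r_{M})\in M$ and $(\bar{u},r_{G})\in G$ with $r_{M}<r_{G}$ and restricting the strict separation to $A_{0}$ would give $0=\tilde{I}_{A_{0}}\,s(r_{M}-r_{G})>0$ on $A_{0}$, a contradiction; this is exactly where the $\mathcal{T}_{c}$--continuity of $F_{1}$ at $\bar{u}$ is used, since it guarantees that $\bar{u}$ occurs as the first coordinate of points of both $G$ and $M$. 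With $s<0$ invertible in $L^{0}(\mathcal{F})$, set $g_{0}:=-\tfrac{1}{s}h\in E^{\ast}_{c}$. Feeding the boundary points $(x,F_{2}(u)-F_{2}(x))\in M$ and $(x'',\phi(x'')+\tfrac{1}{n})\in G$ into the separation, letting $n\to\infty$, and dividing by $-s>0$ yields $g_{0}(x)-\phi(x)\leq g_{0}(x'')+F_{2}(x'')-F_{2}(u)$ for all $x\in int(dom(\phi))$ and $x''\in dom(F_{2})$. Putting $c:=\bigvee\{g_{0}(x)-\phi(x):x\in int(dom(\phi))\}$ and $a(x):=g_{0}(x)-c$, one gets $a\leq\phi$ on $int(dom(\phi))$ and $a\geq\psi$ on $dom(F_{2})$; since $\phi(u)=\psi(u)=0$ forces $a(u)=0$, we have $c=g_{0}(u)$ and $a(x)=g_{0}(x-u)$.

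Finally I would globalize the two affine inequalities to all of $E$. For $a\leq\phi$ I would invoke the line--segment principle ($\bar{u}\in int(dom(\phi))$, $x\in dom(\phi)$ imply $(1-t)\bar{u}+tx\in int(dom(\phi))$ for $t\in[0,1)$) together with the convexity bound $\phi((1-t)\bar{u}+tx)\leq(1-t)\phi(\bar{u})+t\phi(x)$ to pass from $int(dom(\phi))$ to $dom(\phi)$, and then the locality of $\phi$ and $a$ (splitting off the set where $\phi(x)=+\infty$ via $\tilde{I}_{(\phi(x)<+\infty)}x+\tilde{I}_{(\phi(x)=+\infty)}\bar{u}$) to reach every $x\in E$; the inequality $a\geq\psi$ is extended by the same locality device. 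Reading off $g_{1}=g_{0}+g\in\partial F_{1}(u)$ and $g_{2}=-g_{0}\in\partial F_{2}(u)$ with $g_{1}+g_{2}=g$ then completes the reverse inclusion, and hence the equality for every $u\in E$. I expect the exclusion of the vertical separating functional (i.e.\ $s=0$ on a set of positive measure) to be the conceptual heart of the argument, and the locality/stratification bookkeeping needed to upgrade the affine inequalities from $int(dom(\phi))$ and $dom(F_{2})$ to all of $E$ to be the main technical obstacle; both are controlled by the strict--on--$\Omega$ separation in Proposition \ref{pro:4.2} and the countable concatenation apparatus.
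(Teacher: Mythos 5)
Your proposal is correct and follows essentially the same route as the paper's proof: separating the (interior of the) epigraph of the shifted $F_{1}$ from the hypograph $\{(v,a)\ |\ a\leq F_{2}(u)-F_{2}(v)\}$ in $E\times L^{0}(\mathcal{F})$ via Proposition \ref{pro:4.2}, showing the vertical coefficient of the separating functional is strictly negative on $\Omega$ because the height coordinate is unbounded on one set while $\bar{u}$ appears over both, normalizing, and reading off the two subgradients. Your ``sandwich affine function'' packaging and the explicit line--segment/locality globalization are just a more detailed rendering of steps the paper compresses (its density of $int(C_{1})$ in $C_{1}$ and the passage to all $v\in E$), so no substantive difference.
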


\begin{proof}
We only need to prove that $\partial (F_{1}+F_{2})(u)\subset \partial F_{1}(u)+\partial F_{2}(u)$, that is, each $u^{*}\in \partial (F_{1}+F_{2})(u)$ can be decomposed into $u_{1}^{*}+u_{2}^{*}$, with $u_{1}^{*}\in \partial F_{1}(u)$ and $u_{2}^{*}\in \partial F_{2}(u)$. Our hypothesis means that $F_{1}(u)$ and $F_{2}(u)$ belong to $L^{0}(\mathcal{F})$ and that for all $v\in E$,
\begin{equation}
F_{1}(v)+F_{2}(v)\geq u^{*}(v-u)+F_{1}(u)+F_{2}(u).\label{4.1}
\end{equation}

Consider the two $L^{0}-$convex sets in $E\times L^{0}(\mathcal{F})$: $$C_{1}=\{(v, a)|F_{1}(v)-u^{*}(v-u)-F_{1}(u)\leq a\};~~~C_{2}=\{(v, a)|a\leq F_{2}(u)-F_{2}(v)\}.$$

Since $C_{1}$ is the epigraph of the function $G$ defined by $G(v)=F_{1}(v)-u^{*}(v)-F_{1}(u)+u^{*}(u)$ for all $v\in E$, which is proper, $L^{0}-$convex and $\mathcal{T}_{c}-$continuous at $\bar{u}$, it is easy to check that $C_{1}$ is an $L^{0}-$convex set with nonempty $\mathcal{T}_{c}-$interior. The inequality (\ref{4.1}) yields that $$\tilde{I}_{A} int(C_{1})\bigcap \tilde{I}_{A} C_{2}=\emptyset$$ for all $A\in \mathcal{F}$ with $\mu(A)>0$.

By Proposition \ref{pro:4.2}, there exists $(g_{1},g_{2})\in (E\times L^{0}(\mathcal{F}))_{c}^{*}=E_{c}^{*}\times L^{0}(\mathcal{F})_{c}^{*}$ (notice, $L^{0}(\mathcal{F})_{c}^{*}=L^{0}(\mathcal{F}))$ such that $g_{1}(v_{2})+g_{2}(a_{2})>g_{1}(v_{1})+g_{2}(a_{1})$ on $\Omega$ for all $(v_{2},a_{2})\in C_{2}$ and $(v_{1},a_{1})\in int(C_{1})$.

Denote $g_{2}(1)=\beta$, then the inequality above becomes $g_{1}(v_{2})+\beta a_{2}>g_{1}(v_{1})+\beta a_{1}$ on $\Omega$ for all $(v_{2}, a_{2})\in C_{2}$ and all $(v_{1}, a_{1})\in int(C_{1})$. Since $a_{1}$ may be arbitrarily large, $\beta$ must be strictly smaller than $0$ on $\Omega$ (namely $\beta<0$ on $\Omega$), and hence we have that $v^{*}(v_{2})+a_{2}<v^{*}(v_{1})+a_{1}$ on $\Omega$ (by denoting $v^{*}=\frac{g_{1}}{\beta}$) for all $(v_{2}, a_{2})\in C_{2}$ and all $(v_{1}, a_{1})\in int(C_{1})$, which implies that $v^{*}(v_{2})+a_{2}\leq v^{*}(v_{1})+a_{1}$ for all $(v_{2}, a_{2})\in C_{2}$ and all $(v_{1}, a_{1})\in C_{1}$ since $int(C_{1})$ is $\mathcal{T}_{c}-$dense in $C_{1}$.

By taking $v_{2}=v$, $a_{2}=F_{2}(u)-F_{2}(v)$, $v_{1}=u$ and $a_{1}=0$, one can get that $v^{*}(v-u)\leq F_{2}(v)-F_{2}(u)$ for all $v\in E$, namely $v^{*}\in \partial F_{2}(u)$. By taking $v_{2}=u$, $a_{2}=0$, $v_{1}=v$ and $a_{1}=F_{1}(v)-F_{1}(u)-u^{*}(v-u)$, one can have that $v^{*}(v)+F_{1}(v)-F_{1}(u)-u^{*}(v-u)\geq v^{*}(u)$, namely $(u^{*}-v^{*})(v-u)\leq F_{1}(v)-F_{1}(u)$ for all $v\in E$, and hence also $u^{*}-v^{*}\in \partial F_{1}(u)$. Let $u_{1}^{*}=u^{*}-v^{*}$ and $u_{2}^{*}=v^{*}$, then $u^{*}=u_{1}^{*}+u_{2}^{*}$.

This completes the proof.
\end{proof}

Let $(E_{1},\mathcal{P}_{1})$ and $(E_{2},\mathcal{P}_{2})$ be two $RLC$ modules over $K$ with base $(\Omega,\mathcal{F},\mu)$ and $\wedge : E_{1}\rightarrow E_{2}$ a continuous module homomorphism from $(E_{1},\mathcal{T}_{c})$ to $(E_{2},\mathcal{T}_{c})$. Define $\wedge^{*} : (E_{2})_{c}^{*}\rightarrow (E_{1})_{c}^{*}$ by $\wedge^{*}(g_{2})(x_{1})=g_{2}(\wedge (x_{1}))$ for all $g_{2}\in (E_{2})_{c}^{*}$ and $x_{1}\in E_{1}$, called the adjoint of $\wedge$.

Similar to the proof of Proposition $5.7$ of \cite{ET99}, one can make use of Proposition \ref{pro:4.2} to complete the proof of Theorem \ref{the:4.4} below.

\begin{theorem}\label{the:4.4}
Let $(E_{1},\mathcal{P}_{1})$ and $(E_{2},\mathcal{P}_{2})$ be two $RLC$ modules over $R$ with base $(\Omega,\mathcal{F},\mu)$, $\wedge : E_{1}\rightarrow E_{2}$ a continuous module homomorphism from $(E_{1},\mathcal{T}_{c})$ to $(E_{2},\mathcal{T}_{c})$ and $F: E_{2}\rightarrow \bar{L}^{0}(\mathcal{F})$ a proper $\mathcal{T}_{c}-$lower semicontinuous $L^{0}-$convex function such that $F$ is $\mathcal{T}_{c}-$continuous at some point $\wedge(\bar{u})$ (where $\bar{u}\in E_{1}$). Then $\partial(F\circ\wedge)(u)=\wedge^{*}\partial F(\wedge u)$ for all $u\in E_{1}$.
\end{theorem}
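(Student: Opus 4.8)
The plan is to prove the two inclusions separately, the easy one being essentially the dual reformulation of the chain rule. For the inclusion $\wedge^{*}\partial F(\wedge u)\subset\partial(F\circ\wedge)(u)$, I would take $g\in\partial F(\wedge u)$, so that $g(y-\wedge u)\leq F(y)-F(\wedge u)$ for all $y\in E_{2}$, and specialize to $y=\wedge x$: since $\wedge$ is a module homomorphism, $g(\wedge(x-u))=\wedge^{*}g(x-u)\leq F(\wedge x)-F(\wedge u)=(F\circ\wedge)(x)-(F\circ\wedge)(u)$, whence $\wedge^{*}g\in\partial(F\circ\wedge)(u)$ (here $\wedge u\in dom(F)$ guarantees $u\in dom(F\circ\wedge)$). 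The whole content is therefore in the reverse inclusion $\partial(F\circ\wedge)(u)\subset\wedge^{*}\partial F(\wedge u)$, which I would obtain by a separation argument closely modeled on the proof of Theorem \ref{the:4.3} and on Proposition 5.7 of \cite{ET99}, using Proposition \ref{pro:4.2} as the separation device.

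Fix $u^{*}\in\partial(F\circ\wedge)(u)$, so that $u^{*}(x-u)\leq F(\wedge x)-F(\wedge u)$ on $\Omega$ for all $x\in E_{1}$, with $F(\wedge u)\in L^{0}(\mathcal{F})$. In $E_{2}\times L^{0}(\mathcal{F})$ I would introduce the two $L^{0}$--convex sets $C_{1}=\{(y,a)\mid F(y)-F(\wedge u)\leq a\}=epi(G)$, where $G(y)=F(y)-F(\wedge u)$, and $C_{2}=\{(\wedge x,\ u^{*}(x-u))\mid x\in E_{1}\}$. Because $F$ is $\mathcal{T}_{c}$--continuous at $\wedge\bar{u}$, $G$ is proper, $L^{0}$--convex and $\mathcal{T}_{c}$--continuous at $\wedge\bar{u}$, so $C_{1}$ has nonempty $\mathcal{T}_{c}$--interior exactly as in Theorem \ref{the:4.3}. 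The key separation hypothesis is that $\tilde{I}_{A}\,int(C_{1})\bigcap\tilde{I}_{A}C_{2}=\emptyset$ for all $A\in\mathcal{F}$ with $\mu(A)>0$; I would verify this by contradiction. If $\tilde{I}_{A}(y_{1},a_{1})=\tilde{I}_{A}(\wedge x,u^{*}(x-u))$ with $(y_{1},a_{1})\in int(C_{1})$, then $a_{1}>F(y_{1})-F(\wedge u)$ on $\Omega$; setting $x'=\tilde{I}_{A}x+\tilde{I}_{A^{c}}u$ and using the locality of $F$, $\wedge$ and $u^{*}$ one rewrites the relation on $A$ as $\tilde{I}_{A}F(\wedge x')-\tilde{I}_{A}F(\wedge u)<\tilde{I}_{A}u^{*}(x'-u)$, which contradicts the defining inequality of $u^{*}$ on the positive--measure set $A$. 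This locality bookkeeping is the first genuinely technical step.

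Applying Proposition \ref{pro:4.2} with $M=C_{2}$ and $G=int(C_{1})$ yields $(g_{1},g_{2})\in(E_{2})_{c}^{*}\times L^{0}(\mathcal{F})$ with $g_{1}(v_{2})+g_{2}(a_{2})>g_{1}(v_{1})+g_{2}(a_{1})$ on $\Omega$ for all $(v_{2},a_{2})\in C_{2}$ and $(v_{1},a_{1})\in int(C_{1})$. Writing $\beta=g_{2}(1)$, the freedom to take $a_{1}$ arbitrarily large forces $\beta\leq 0$ on $\Omega$; the delicate point, which I expect to be the main obstacle, is to upgrade this to $\beta<0$ on $\Omega$ (non--verticality of the separating hyperplane). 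I would rule out $\beta=0$ on a set $B$ of positive measure by noting that there the inequality would read $g_{1}(v_{2})>g_{1}(v_{1})$ on $B$; fixing $(v_{2},a_{2})$ and letting $v_{1}$ range over the $\mathcal{T}_{c}$--open projection of $int(C_{1})$, which is $L^{0}$--absorbing about $\wedge\bar{u}$, would force $\tilde{I}_{B}g_{1}=0$, so that $(g_{1},\beta)$ vanishes on $B$, contradicting the strict separation on all of $\Omega$.

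With $\beta<0$ on $\Omega$ I would set $v^{*}=g_{1}/\beta\in(E_{2})_{c}^{*}$; dividing by $\beta$ reverses the inequality to $v^{*}(v_{2})+a_{2}<v^{*}(v_{1})+a_{1}$ on $\Omega$, and passing to $\leq$ via the $\mathcal{T}_{c}$--density of $int(C_{1})$ in $C_{1}$ (as in Theorem \ref{the:4.3}) gives $v^{*}(v_{2})+a_{2}\leq v^{*}(v_{1})+a_{1}$ for all $(v_{2},a_{2})\in C_{2}$, $(v_{1},a_{1})\in C_{1}$. Two substitutions then finish the argument: taking $(v_{2},a_{2})=(\wedge u,0)$ and $(v_{1},a_{1})=(y,F(y)-F(\wedge u))$ yields $(-v^{*})(y-\wedge u)\leq F(y)-F(\wedge u)$ for all $y$, i.e.\ $-v^{*}\in\partial F(\wedge u)$; while taking $(v_{1},a_{1})=(\wedge u,0)\in C_{1}$ and $(v_{2},a_{2})=(\wedge x,u^{*}(x-u))$ arbitrary in $C_{2}$ gives $(v^{*}\circ\wedge+u^{*})(x-u)\leq 0$ for all $x$, and since $x-u$ ranges over all of $E_{1}$ and the map is a module homomorphism, $v^{*}\circ\wedge+u^{*}=0$. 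Hence $u^{*}=\wedge^{*}(-v^{*})$ with $-v^{*}\in\partial F(\wedge u)$, establishing $u^{*}\in\wedge^{*}\partial F(\wedge u)$ and completing the proof.
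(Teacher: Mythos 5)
Your overall route is exactly the one the paper intends: the paper gives no written proof of Theorem \ref{the:4.4}, saying only that it follows from Proposition \ref{pro:4.2} in the manner of Proposition 5.7 of \cite{ET99}, and your argument is a faithful adaptation of the printed proof of Theorem \ref{the:4.3}, with $C_{2}$ replaced by the graph--type set $\{(\wedge x,\,u^{*}(x-u))\mid x\in E_{1}\}$. The easy inclusion, the locality bookkeeping in the disjointness verification, the passage from $int(C_{1})$ to $C_{1}$ by $\mathcal{T}_{c}$--density, and the two final substitutions (yielding $-v^{*}\in\partial F(\wedge u)$ and $v^{*}\circ\wedge+u^{*}=0$) are all correct.

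The one step that fails as written is your argument that $\beta=g_{2}(1)<0$ on $\Omega$. On $B=(\beta=0)$ the separation reads $g_{1}(v_{2})>g_{1}(v_{1})$ on $B$, and you claim that letting $v_{1}$ range over the open projection of $int(C_{1})$, which contains $\wedge\bar{u}+U$ for some $L^{0}$--balanced, $L^{0}$--absorbent $U$, forces $\tilde{I}_{B}g_{1}=0$. It does not: since $U$ is balanced this only gives $|g_{1}(w)|<g_{1}(v_{2})-g_{1}(\wedge\bar{u})$ on $B$ for all $w\in U$, i.e.\ boundedness of $g_{1}$ on $U$ over $B$, and a nonzero element of $(E_{2})_{c}^{*}$ can perfectly well be bounded on an $L^{0}$--absorbent neighborhood (the identity on $L^{0}(\mathcal{F})$ is bounded by $\varepsilon$ on $U(\varepsilon)$). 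The correct and in fact simpler observation is that the projections of $C_{2}$ and of $int(C_{1})$ onto $E_{2}$ meet at $\wedge\bar{u}$: one has $(\wedge\bar{u},\,u^{*}(\bar{u}-u))\in C_{2}$, and since $F$ is $\mathcal{T}_{c}$--continuous at $\wedge\bar{u}$ it is bounded above near $\wedge\bar{u}$, so $(\wedge\bar{u},a)\in int(C_{1})$ for $a$ large enough; taking $v_{2}=v_{1}=\wedge\bar{u}$ in the inequality gives $g_{1}(\wedge\bar{u})>g_{1}(\wedge\bar{u})$ on $B$, whence $\mu(B)=0$. (Alternatively, first vary $v_{2}=\wedge(\lambda x)$ over the submodule $\wedge(E_{1})$ to force $\tilde{I}_{B}(g_{1}\circ\wedge)=0$, then take $v_{1}=\wedge\bar{u}$.) With this one--line repair the rest of your proof goes through; note that the same gloss occurs in the paper's own proof of Theorem \ref{the:4.3}, where ``$a_{1}$ arbitrarily large'' only yields $\beta\leq 0$ and the case $\beta=0$ on a positive--measure set must be excluded by the continuity point $\bar{u}$ in exactly this way.
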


\section{G\^{a}teaux - and Fr\'{e}ch\'{e}t - differentiability}
\label{sec:5}

The main results of the section are Theorems \ref{the:5.7} and \ref{the:5.10} below.

For a net $(\xi_{\delta}, \delta\in \Gamma)$ in $L_{++}^{0}(\mathcal{F})$, $(\xi_{\delta}, \delta\in \Gamma)$ is said to be decreasing if $\xi_{\delta_{1}}\leq \xi_{\delta_{2}}$ for all $\delta_{1}, \delta_{2}\in \Gamma$ such that $\delta_{2}<\delta_{1}$, where $<$ is the partial order on $\Gamma$. We say that $\xi_{\delta}\downarrow 0$ if $(\xi_{\delta}, \delta\in \Gamma)$ is decreasing and $\bigwedge_{\delta\in \Gamma}\xi_{\delta}=0$.

\begin{lemma}\label{lem:5.1}
Let $(E,\mathcal{P})$ be an $RLC$ module over $R$ with base $(\Omega,\mathcal{F},\mu)$, $f: E\rightarrow \bar{L}^0(\mathcal{F})$ an $L^{0}-$convex function and $x_{0}\in E$. Then for any net $(\xi_{\delta}, \delta\in \Gamma)$ in $L_{++}^{0}(\mathcal{F})$ such that $\xi_{\delta}\downarrow 0$, we have that
$$\bigwedge\left\{\frac{f(x_{0}+\xi y)-f(x_{0})}{\xi}\left|\right.\xi\in L_{++}^{0}(\mathcal{F})\right\}=\bigwedge\left\{\frac{f(x_{0}+\xi_{\delta} y)-f(x_{0})}{\xi_{\delta}}\left|\right.\delta\in \Gamma\right\}$$ for all $y\in E$.
\end{lemma}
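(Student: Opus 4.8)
The plan is to reduce the identity to the classical monotonicity of the difference quotient of a convex function, transplanted to the $L^0$--module setting. Fix $y \in E$ and write $g(\xi) := \dfrac{f(x_0 + \xi y) - f(x_0)}{\xi}$ for $\xi \in L^0_{++}(\mathcal{F})$. First I would record two structural facts about $g$. (a) \emph{Monotonicity}: if $\eta, \zeta \in L^0_{++}(\mathcal{F})$ with $\eta \leq \zeta$, then $g(\eta) \leq g(\zeta)$. This comes straight from $L^0$--convexity: writing $\lambda = \eta/\zeta \in L^0_+(\mathcal{F})$ with $0 \leq \lambda \leq 1$ and $x_0 + \eta y = \lambda(x_0 + \zeta y) + (1-\lambda)x_0$, convexity gives $f(x_0+\eta y) - f(x_0) \leq \lambda[f(x_0+\zeta y)-f(x_0)]$, and multiplying by $\tfrac{1}{\eta}$ yields $g(\eta)\leq g(\zeta)$ since $\lambda/\eta = 1/\zeta$. (b) \emph{Locality}: since an $L^0$--convex function is local, for any $A \in \mathcal{F}$ and any $\eta, \zeta \in L^0_{++}(\mathcal{F})$ one has $\tilde{I}_{A} g(\eta) = \tilde{I}_{A} g(\tilde{I}_{A}\eta + \tilde{I}_{A^{c}}\zeta)$, so $g$ may be compared piecewise.

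Denote $a = \bigwedge\{g(\xi) : \xi \in L^0_{++}(\mathcal{F})\}$ and $b = \bigwedge\{g(\xi_\delta) : \delta \in \Gamma\}$. The inequality $a \leq b$ is immediate because $\{\xi_\delta : \delta\in\Gamma\}$ is a subset of $L^0_{++}(\mathcal{F})$, so the left infimum runs over a larger set. The whole content is the reverse inequality $a \geq b$, which I would obtain by showing $g(\xi) \geq b$ for every fixed $\xi \in L^0_{++}(\mathcal{F})$ and then taking the infimum over $\xi$.

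The main obstacle is that $\xi_\delta \downarrow 0$ asserts only $\bigwedge_{\delta} \xi_\delta = 0$ in the lattice order; it does \emph{not} guarantee that some $\xi_\delta$ lies pointwise below the given $\xi$, so I cannot simply invoke monotonicity once. The remedy is a stratification argument. Since the net is decreasing, $\{\xi_\delta : \delta \in \Gamma\}$ is directed downward with infimum $0$, so by the fact recalled in the introduction there is a nonincreasing sequence $\{\xi_{\delta_n} : n \in N\}$ drawn from the net with $\xi_{\delta_n}\downarrow 0$. Put $A_n = (\xi_{\delta_n} \leq \xi)$; because $\xi_{\delta_n}\downarrow 0$ a.e. and $\xi > 0$ on $\Omega$, these sets increase to $\Omega$, i.e. $A_n \uparrow \Omega$. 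Setting $\eta_n := \tilde{I}_{A_n}\xi_{\delta_n} + \tilde{I}_{A_n^{c}}\xi$ we have $\eta_n \leq \xi$ on $\Omega$, so monotonicity gives $g(\eta_n)\leq g(\xi)$, while locality gives $\tilde{I}_{A_n} g(\eta_n) = \tilde{I}_{A_n} g(\xi_{\delta_n})$; hence $\tilde{I}_{A_n} g(\xi_{\delta_n}) \leq \tilde{I}_{A_n} g(\xi)$. Since $g(\xi_{\delta_n}) \geq b$ (as $b$ is a lower bound of the whole net), we obtain $\tilde{I}_{A_n} b \leq \tilde{I}_{A_n} g(\xi)$ for every $n$, and letting $A_n \uparrow \Omega$ yields $b \leq g(\xi)$.

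Taking the infimum over $\xi \in L^0_{++}(\mathcal{F})$ then gives $a \geq b$, hence $a = b$, which is the asserted identity. I expect the only delicate points to be the handling of the conventions $0\cdot(\pm\infty)=0$ and $\infty - \infty = \infty$ when $f(x_0)$ or $f(x_0+\xi y)$ is infinite, so that the convexity inequality and the division by $\xi$ remain meaningful, and the careful bookkeeping of locality needed to pass between the pointwise comparison on $A_n$ and the global inequalities; the essential--infimum extraction of the sequence $\{\xi_{\delta_n}\}$ is routine given the directed--downward structure of the net.
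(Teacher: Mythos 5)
Your proof is correct and follows essentially the same route as the paper: monotonicity of the difference quotient from $L^0$--convexity, extraction of a sequence $\xi_{\delta_n}\downarrow 0$ from the net, and stratification on $A_n=(\xi_{\delta_n}\leq\xi)$ with $A_n\uparrow\Omega$ combined with the local property of $f$. Your version is marginally streamlined in that you use $b$ directly as a lower bound for every $g(\xi_{\delta_n})$ rather than first extracting a subsequence along which $g(\xi_{\delta})$ realizes the infimum, but this is a cosmetic difference, not a different argument.
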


\begin{proof}
Denote $\eta_{1}=\bigwedge\left\{\frac{f(x_{0}+\xi y)-f(x_{0})}{\xi}|\xi\in L_{++}^{0}(\mathcal{F})\right\}$ and $\eta_{2}=\bigwedge\left\{\frac{f(x_{0}+\xi_{\delta} y)-f(x_{0})}{\xi_{\delta}}|\right.$\\$\left.\delta\in \Gamma\right\}$, it is clear that $\eta_{1}\leq \eta_{2}$. We only need to prove $\eta_{2}\leq \eta_{1}$ as follows.

Since $f$ is $L^{0}-$convex, it is easy to verify that $\frac{f(x_{0}+\xi y)-f(x_{0})}{\xi}\leq \frac{f(x_{0}+\eta y)-f(x_{0})}{\eta}$ for all $\xi$ and $\eta\in L_{++}^{0}(\mathcal{F})$ such that $\xi\leq \eta$, which also shows that the net $\left(\frac{f(x_{0}+\xi_{\delta} y)-f(x_{0})}{\xi_{\delta}}, \right.$\\$\left.\delta\in \Gamma\right)$ is decreasing. Thus there exists a decreasing sequence $\{\xi_{\delta_n}|n\in N\}$ such that $\frac{f(x_{0}+\xi_{\delta_{n}} y)-f(x_{0})}{\xi_{\delta_{n}}}\downarrow\eta_{2}$. Since $\xi_{\delta}\downarrow 0$, there exists a decreasing $\{\xi_{\delta_{n}^{'}}|n\in N\}\downarrow 0$, let $\bar{\delta}_{n}\in \Gamma$ be such that $\bar{\delta}_{n}\geq \delta_{n}$ and $\bar{\delta}_{n}\geq \delta^{\prime}_{n}$, then $\left\{\frac{f(x_{0}+\xi_{\bar{\delta}_{n}} y)-f(x_{0})}{\xi_{\bar{\delta}_{n}}}|n\in N\right\}$ converges to $\eta_{2}$ almost everywhere.

We can also assume, without loss of generality, that $\{\bar{\delta}_{n}|n\in N\}$ is increasing. For any given $\xi\in L_{++}^{0}(\mathcal{F})$, let $A_{n}=(\xi_{\bar{\delta}_{n}}\leq \xi)$, then $A_{n}\uparrow \Omega$. Since $f$ is also local, one can see that $\frac{f(x_{0}+\xi_{\bar{\delta}_{n}} y)-f(x_{0})}{\xi_{\bar{\delta}_{n}}}\leq \frac{f(x_{0}+\xi y)-f(x_{0})}{\xi}$ on $A_{n}$ for all $n\in N$, letting $n\rightarrow +\infty$ one can have that $\eta_{2}\leq \frac{f(x_{0}+\xi y)-f(x_{0})}{\xi}$, which further shows that $\eta_{2}\leq \eta_{1}$.

This completes the proof.
\end{proof}

\begin{definition}\label{def:5.2}
Let $(E,\mathcal{P})$ be an $RLC$ module over $R$ with base $(\Omega,\mathcal{F},\mu)$ and $f: E\rightarrow \bar{L}^0(\mathcal{F})$ be a proper and local function. Further, let $x_{0}\in dom(f)$.\\
$(1)$ $f$ is said to have the directional derivative at $x_{0}$ if \\$f^{'}(x_{0},y):=\lim_{t_{n}\downarrow 0}\frac{f(x_{0}+t_{n}y)-f(x_{0})}{t_{n}}$ exists almost everywhere for all $y\in E$ and all decreasing sequence $\{t_{n}|n\in N\}$ of positive numbers such that $t_{n}\downarrow 0$, where $f^{'}(x_{0},y)$ is allowed to take values in $\bar{L}^{0}(\mathcal{F})$, called the directional derivative of $f$ at $x_{0}$ along the direction $y$.\\
$(2)$ $f$ is said to be G\^{a}teaux-differentiable at $x_{0}$ if $f$ has the directional derivative at $x_{0}$ and there exists $u\in E_{c}^{*}$ such that $f^{'}(x_{0},y)=u(y)$ for all $y\in E$, in which case $u$ is called the G\^{a}teaux-derivative of $f$ at $x_{0}$, denoted by $f^{'}(x_{0})$.\\
$(3)$ If $(E,\mathcal{P})$ is an $RN$ module (for example, $(E,\|\cdot\|)$) and there exists $u\in E_{c}^{*}$ such that $$\left\{\frac{f(x_{0}+h_{n})-f(x_{0})-u(h_{n})}{\|h_{n}\|}\left|\right.n\in N\right\}~\mbox{converges to 0 almost everywhere}$$ for all sequence $\{h_{n}|n\in N\}$ such that $\{\|h_{n}\||n\in N\}$ converges to 0 almost everywhere, where we adopt the convention $\frac{0}{0}=0$, in which case $f$ is said to be Fr\'{e}ch\'{e}t-differentiable at $x_{0}$ and $u$ is called the Fr\'{e}ch\'{e}t-derivative of $f$ at $x_{0}$, denoted by $\nabla f(x_{0})$.
\end{definition}

\begin{remark}\label{rem:5.3}
Lemma \ref{lem:5.1} shows that $f^{'}(x_{0},y)$ always exists and is equal to \\$\bigwedge\left\{\frac{f(x_{0}+\xi  y)-f(x_{0})}{\xi}|\xi\in L_{++}^{0}(\mathcal{F})\right\}$ when $f$ is an $L^{0}-$convex function, in which case
$$f^{'}(x_{0},y)=~\mbox{the limit almost everywhere of}~ \left\{\frac{f(x_{0}+\xi_{n} y)-f(x_{0})}{\xi_{n}}|n\in N\right\}$$ for any sequence $\{\xi_{n}|n\in N\}$ in $L_{++}^{0}(\mathcal{F})$ such that $\xi_{n}\downarrow 0$. Clearly, when $f^{'}(x_{0},y)$ exists, $\lim_{t_{n}\uparrow 0}\frac{f(x_{0}+t_{n}y)-f(x_{0})}{t_{n}}$ also exists and is exactly $-f^{'}(x_{0}, -y)$ for any sequence $\{t_{n}|n\in N\}$ of negative numbers such that $t_{n}\uparrow 0$. Further, when $f$ is G\^{a}teaux-differentiable at $x_{0}$, $\lim_{t_{n}\rightarrow0}\frac{f(x_{0}+t_{n}y)-f(x_{0})}{t_{n}}$ exists for any sequence $\{t_{n}|n\in N\}$ of real numbers such that $t_{n}\rightarrow 0$. Finally, when $f$ is Fr\'{e}ch\'{e}t-differentiable $f$ is also G\^{a}teaux-differentiable and the two kinds of derivatives concide.
\end{remark}

To study the properties of $f^{'}(x_{0},y)$ for an $L^{0}-$convex function $f$, we give Proposition \ref{pro:5.4} below, whose proof is omitted since it is completely a copy of classical Lemma $5.41$ of \cite{AB06}.

\begin{proposition}\label{pro:5.4}
Let $(E,\mathcal{P})$ be an $RLC$ module over $R$ with base $(\Omega,\mathcal{F},\mu)$, $f: E\rightarrow \bar{L}^0(\mathcal{F})$ a proper $L^{0}-$convex function and $x_{0}\in dom(f)$. Then $$|f(x_{0}+\lambda y)-f(x_{0})|\leq \lambda \max \{f(x_{0}+y)-f(x_{0}), f(x_{0}-y)-f(x_{0})\}$$ for all $y\in E$ and $\lambda \in L_{+}^{0}(\mathcal{F})$ such that $0\leq\lambda\leq1$.
\end{proposition}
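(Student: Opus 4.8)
The plan is to imitate the classical argument (Lemma~5.41 of \cite{AB06}) by exhibiting $x_0+\lambda y$ and $x_0$ as suitable $L^0$--convex combinations and invoking the $L^0$--convexity of $f$; the only genuine departure from the real--valued proof will be the bookkeeping forced by the $\bar{L}^0(\mathcal{F})$--valued arithmetic, since $f$ may take the value $+\infty$ on parts of $\Omega$.

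First I would establish the upper estimate. Since $0\leq\lambda\leq1$, the identity $x_0+\lambda y=\lambda(x_0+y)+(1-\lambda)x_0$ exhibits $x_0+\lambda y$ as an $L^0$--convex combination of $x_0+y$ and $x_0$ with the admissible weight $\lambda\in L_+^0(\mathcal{F})$; applying $L^0$--convexity and subtracting $f(x_0)$ (which lies in $L^0(\mathcal{F})$, because $x_0\in dom(f)$ and $f$ is proper) yields
\[
f(x_0+\lambda y)-f(x_0)\leq\lambda\bigl(f(x_0+y)-f(x_0)\bigr).
\]
Next, the lower estimate. Here I would use the less obvious decomposition $x_0=\tfrac{1}{1+\lambda}(x_0+\lambda y)+\tfrac{\lambda}{1+\lambda}(x_0-y)$, whose coefficients lie in $L_+^0(\mathcal{F})$, sum to $1$, and satisfy $0\leq\tfrac{1}{1+\lambda}\leq1$ because $1+\lambda\geq1>0$ on $\Omega$. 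Applying $L^0$--convexity and then multiplying through by $1+\lambda$ gives $(1+\lambda)f(x_0)\leq f(x_0+\lambda y)+\lambda f(x_0-y)$, which rearranges to
\[
f(x_0)-f(x_0+\lambda y)\leq\lambda\bigl(f(x_0-y)-f(x_0)\bigr).
\]
Combining the two displays and bounding each right--hand side by $\lambda\max\{f(x_0+y)-f(x_0),\,f(x_0-y)-f(x_0)\}$ delivers the asserted inequality.

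The main (indeed the only) obstacle is to make the subtractions and the multiplication by $1+\lambda$ legitimate in $\bar{L}^0(\mathcal{F})$, since $f(x_0\pm y)$ and $f(x_0+\lambda y)$ may equal $+\infty$. I would dispose of this by stratification, the inequality being a pointwise a.e.\ statement that may be checked separately on the pieces of a measurable partition of $\Omega$. On the set where $f(x_0+y)$, $f(x_0-y)$ and $f(x_0+\lambda y)$ are all finite, the computation is the ordinary real one performed $\omega$--wise. On its complement, wherever $\lambda>0$ the right--hand side $\lambda\max\{\cdots\}$ equals $+\infty$ and the inequality is trivial; on the remaining portion where $\lambda=0$ one has $x_0+\lambda y=x_0$, so the left--hand side vanishes and the convention $0\cdot(\pm\infty)=0$ keeps the right--hand side equal to $0$ as well, so that $0\leq0$. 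Because these conventions are internally consistent and the local property of $f$ (implied by $L^0$--convexity) makes the $\tilde{I}_A$--restrictions compatible, the pieces patch together to the global inequality, completing the proof.
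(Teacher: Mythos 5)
Your proposal is correct and is essentially the proof the paper intends: the paper omits the argument, stating it is ``completely a copy of classical Lemma 5.41 of \cite{AB06}'', and your two convex decompositions $x_0+\lambda y=\lambda(x_0+y)+(1-\lambda)x_0$ and $x_0=\tfrac{1}{1+\lambda}(x_0+\lambda y)+\tfrac{\lambda}{1+\lambda}(x_0-y)$ are exactly that classical argument, with the stratification and the conventions $0\cdot(\pm\infty)=0$ correctly handling the $\bar{L}^0(\mathcal{F})$--valued bookkeeping (note $f$ proper rules out the value $-\infty$, so all sums are well defined). No gaps.
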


Let us recall that a function $h: E\rightarrow \bar{L}^0(\mathcal{F})$ is called an $L^{0}-$sublinear function on an $L^{0}(\mathcal{F})-$module $E$ if $f$ is subadditive, namely $f(x+y)\leq f(x)+f(y)$ for all $x,y\in E$ and $f$ is also $L^{0}-$positively homogeneous, namely $f(\xi x)=\xi f(x)$ for all $\xi\in L_{+}^{0}(\mathcal{F})$ and $x\in E$.

\begin{theorem}\label{the:5.5}
Let $(E,\mathcal{P})$ be an $RLC$ module over $R$ with base $(\Omega,\mathcal{F},\mu)$, $f: E\rightarrow \bar{L}^0(\mathcal{F})$ a proper $L^{0}-$convex function and $x_{0}\in dom(f)$. Then we have the following statements:\\
$(1)$ $f^{'}(x_{0}, \cdot): E\rightarrow \bar{L}^0(\mathcal{F})$ is an $L^{0}-$sublinear function;\\
$(2)$ If $f$ is $\mathcal{T}_{c}-$continuous at $x_{0}$, then $f^{'}(x_{0}, \cdot ): E\rightarrow \bar{L}^0(\mathcal{F})$ is also $\mathcal{T}_{c}-$continuous and $f^{'}(x_{0}, y)\in L^{0}(\mathcal{F})$ for all $y\in E$.
\end{theorem}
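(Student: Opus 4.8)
The plan is to derive everything from the infimum representation $f^{'}(x_{0},y)=\bigwedge\{\frac{f(x_{0}+\xi y)-f(x_{0})}{\xi}\mid \xi\in L_{++}^{0}(\mathcal{F})\}$ supplied by Lemma \ref{lem:5.1} and Remark \ref{rem:5.3}, together with the monotonicity of the difference quotient in the step $\xi$ recorded inside the proof of Lemma \ref{lem:5.1}. A preliminary fact I would establish first is that $f^{'}(x_{0},\cdot)$ is local: since $f^{'}(x_{0},y)$ is the almost everywhere limit of $\frac{f(x_{0}+t_{n}y)-f(x_{0})}{t_{n}}$ along a sequence of positive real numbers $t_{n}\downarrow 0$, and each such quotient is local in $y$ by the locality of $f$ (the deterministic scalar $t_{n}$ commutes with $\tilde{I}_{A}$), the almost everywhere limit is local as well.

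For the subadditivity in $(1)$ I would use $L^{0}$-convexity in the form $x_{0}+\xi(y+z)=\frac{1}{2}(x_{0}+2\xi y)+\frac{1}{2}(x_{0}+2\xi z)$ to obtain, for every $\xi\in L_{++}^{0}(\mathcal{F})$,
$$\frac{f(x_{0}+\xi(y+z))-f(x_{0})}{\xi}\leq \frac{f(x_{0}+2\xi y)-f(x_{0})}{2\xi}+\frac{f(x_{0}+2\xi z)-f(x_{0})}{2\xi}.$$
Specializing $\xi$ to a real sequence $t_{n}\downarrow 0$ and letting $n\to\infty$, the left side tends to $f^{'}(x_{0},y+z)$ while the two terms on the right decrease to $f^{'}(x_{0},y)$ and $f^{'}(x_{0},z)$ (since $2t_{n}\downarrow 0$), yielding $f^{'}(x_{0},y+z)\leq f^{'}(x_{0},y)+f^{'}(x_{0},z)$; the extended-real arithmetic is absorbed by the conventions $0\cdot(\pm\infty)=0$ and $\infty-\infty=\infty$. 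For $L^{0}$-positive homogeneity, on strictly positive scalars $\xi\in L_{++}^{0}(\mathcal{F})$ the substitution $\eta'=\eta\xi$ in the infimum formula gives $f^{'}(x_{0},\xi y)=\bigwedge_{\eta'}\xi\frac{f(x_{0}+\eta' y)-f(x_{0})}{\eta'}=\xi f^{'}(x_{0},y)$, since a strictly positive $\xi$ may be pulled out of the infimum. Extending to a general $\xi\in L_{+}^{0}(\mathcal{F})$ is the main module-specific obstacle: on $(\xi=0)$ both sides vanish, using $f^{'}(x_{0},0)=0$ and the locality noted above, while on $A=(\xi>0)$ I would pass to the restricted module $(E_{A},\mathcal{P}_{A})$, where $\tilde{I}_{A}\xi$ is strictly positive, and invoke the strictly positive case together with the fact that directional derivatives commute with restriction, i.e. $\tilde{I}_{A}f^{'}(x_{0},y)=(f_{A})^{'}(\tilde{I}_{A}x_{0},\tilde{I}_{A}y)$.

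For $(2)$ I would first pin down finiteness and a quantitative bound. Theorem \ref{the:3.4} furnishes a $\mathcal{T}_{c}$-neighborhood of $x_{0}$, some $\zeta\in L_{++}^{0}(\mathcal{F})$ and $Q\in\mathcal{P}(F)$ with $\abs{f(x)-f(y)}\leq \zeta\|x-y\|_{Q}$ on that neighborhood. Choosing $\eta\in L_{++}^{0}(\mathcal{F})$ small enough that $x_{0}+\eta y$ lies there (for instance $\eta=\varepsilon/(1+\|y\|_{Q})$ for the relevant radius $\varepsilon$) bounds the corresponding quotient above by $\zeta\|y\|_{Q}$, so $f^{'}(x_{0},y)\leq \zeta\|y\|_{Q}$; monotonicity makes the infimum equal to the limit as $\eta\downarrow 0$, and for all small $\eta$ the same quotient exceeds $-\zeta\|y\|_{Q}$, whence $f^{'}(x_{0},y)\geq -\zeta\|y\|_{Q}$. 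Thus $\abs{f^{'}(x_{0},y)}\leq \zeta\|y\|_{Q}$, so in particular $f^{'}(x_{0},y)\in L^{0}(\mathcal{F})$. Finally, combining the subadditivity from $(1)$ with this bound gives $f^{'}(x_{0},y)-f^{'}(x_{0},z)\leq f^{'}(x_{0},y-z)\leq \zeta\|y-z\|_{Q}$ and symmetrically, so $\abs{f^{'}(x_{0},y)-f^{'}(x_{0},z)}\leq \zeta\|y-z\|_{Q}$; this $L^{0}$-Lipschitz estimate, through the joint continuity of scalar multiplication on $(L^{0}(\mathcal{F}),\mathcal{T}_{c})$, immediately delivers the $\mathcal{T}_{c}$-continuity of $f^{'}(x_{0},\cdot)$. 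I expect the homogeneity on the zero set of $\xi$ and the bookkeeping of $\pm\infty$ in the subadditivity step to be the only genuinely delicate points, everything else being routine once the infimum representation and the Lipschitz bound are in hand.
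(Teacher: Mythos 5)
Your proposal is correct and follows the same overall strategy as the paper: derive sublinearity from the infimum representation of Lemma \ref{lem:5.1} (with the stratification into $(\xi>0)$ and $(\xi=0)$ plus passage to the restricted module $(E_A,\mathcal{P}_A)$ for general $\xi\in L^0_+(\mathcal{F})$, exactly as the paper does), then bound $f^{'}(x_0,\cdot)$ near $\theta$ and use sublinearity to globalize. The two small divergences are worth noting. In part $(1)$ you prove subadditivity directly via the midpoint identity $x_0+\xi(y+z)=\tfrac{1}{2}(x_0+2\xi y)+\tfrac{1}{2}(x_0+2\xi z)$, whereas the paper proves $L^0$-convexity of $f^{'}(x_0,\cdot)$ and leaves the deduction of subadditivity from convexity plus positive homogeneity implicit; your version is marginally more self-contained. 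In part $(2)$ the paper takes, for each $\varepsilon\in L^0_{++}(\mathcal{F})$, a neighborhood $\mathcal{U}$ of $\theta$ on which $|f(x_0+u)-f(x_0)|\leq\varepsilon$ and invokes Proposition \ref{pro:5.4} to get $|f^{'}(x_0,u)|\leq\varepsilon$, i.e.\ continuity at $\theta$, and then appeals to sublinearity; you instead invoke the locally $L^0$-Lipschitzian estimate of Theorem \ref{the:3.4} to obtain the quantitative bound $|f^{'}(x_0,y)|\leq\zeta\|y\|_Q$ for all $y\in E$ and hence the global $L^0$-Lipschitz estimate $|f^{'}(x_0,y)-f^{'}(x_0,z)|\leq\zeta\|y-z\|_Q$. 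Your route uses a slightly heavier tool but yields a stronger (and cleaner) conclusion; both are legitimate, and your explicit preliminary verification that $f^{'}(x_0,\cdot)$ is local fills in a step the paper uses tacitly.
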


\begin{proof}
$(1)$ First, we prove that $f^{'}(x_{0}, \cdot): E\rightarrow \bar{L}^0(\mathcal{F})$ is $L^{0}-$convex: in fact,
\begin{equation*}
\begin{aligned}
f^{'}(x_{0}, \lambda y_{1}+(1-\lambda)y_{2})&=\lim_{t_{n}\downarrow0} \frac{f(x_{0}+t_{n}[\lambda y_{1}+(1-\lambda)y_{2}])-f(x_{0})}{t_{n}} \\
&\leq \lim_{t_{n}\downarrow0} \left\{\lambda\left[\frac{f(x_{0}+t_{n}y_{1})-f(x_{0})}{t_{n}}\right]+(1-\lambda)\right.\\
&~~~~~~~~~~~~~~~~~~~~\left.~~~~~~~~~~~~~~~~~~~~~~~~~~~~~~~~~~~~~~~\cdot\left[\frac{f(x_{0}+t_{n}y_{2})-f(x_{0})}{t_{n}}\right]\right\}\\
&=\lambda f{'}(x_{0}, y_{1})+(1-\lambda)f{'}(x_{0}, y_{2}),
\end{aligned}
\end{equation*}
for all $y_{1}, y_{2}\in E$ and $\lambda\in L_{+}^{0}(\mathcal{F})$ such that $0\leq \lambda \leq 1$. Then, we prove that $f^{'}(x_{0}, \cdot): E\rightarrow \bar{L}^0(\mathcal{F})$ is $L^{0}-$positively homogeneous: if $\xi\in L_{++}^{0}(\mathcal{F})$ and $y\in E$, $$f^{'}(x_{0}, \xi y)=\lim_{t_{n}\downarrow0} \frac{f(x_{0}+t_{n}\xi y)-f(x_{0})}{t_{n}}=\xi \lim_{t_{n}\xi\downarrow0} \frac{f(x_{0}+t_{n}\xi y)-f(x_{0})}{t_{n}\xi}=\xi f^{'}(x_{0}, y)$$ by Remark \ref{rem:5.3}; if $\xi\in L_{+}^{0}(\mathcal{F})$, let $A=(\xi>0)$ and $A^{c}=(\xi=0)$, then by the local property of $f$, one can see that $f^{'}(x_{0}, \xi y)=0=\xi f^{'}(x_{0}, y)$ on $A^{c}$, further let $E_{A}=\tilde{I_{A}}E$, $\mathcal{P}_{A}=\left\{\|\cdot\|_{E_{A}}\left|\right.\|\cdot\|\in \mathcal{P}\right\}$ and $f_{A}: E_{A}\rightarrow \tilde{I}_{A}\bar{L}^{0}(\mathcal{F})$ be defined by $f_{A}(\tilde{I}_{A}x)=\tilde{I}_{A}f(x)$, then $f_{A}$ is an $L^{0}(A\bigcap \mathcal{F})-$convex function on the $RLC$ module $(E_{A}, \mathcal{P}_{A})$ with base $(A,A\bigcap\mathcal{F},\mu_{A})$, by the case we have proved, one can see that $\tilde{I}_{A}f^{'}(x_{0}, \xi y)=f_{A}^{'}(\tilde{I}_{A}x_{0}, \tilde{I}_{A}\xi\cdot \tilde{I}_{A}y)=\tilde{I}_{A}\xi\cdot f_{A}^{'}(\tilde{I}_{A}x_{0}, \tilde{I}_{A}y)=\tilde{I}_{A}\xi\cdot f^{'}(x_{0}, y)$, namely $f^{'}(x_{0}, \xi y)=\xi f^{'}(x_{0}, y)$ on $A$. To sum up, we have that $f^{'}(x_{0}, \xi y)=\xi f^{'}(x_{0}, y)$.

$(2)$ Since $f$ is $\mathcal{T}_{c}-$continuous at $x_{0}$, there exists an $L^{0}-$absorbient, $L^{0}-$balanced and $L^{0}-$convex $\mathcal{T}_{c}-$neighborhood $\mathcal{U}$ of $\theta$ for any $\varepsilon\in L_{++}^{0}(\mathcal{F})$ such that $|f(x_{0}+u)-f(x_{0})|\leq \varepsilon$ for all $u\in \mathcal{U}$, then
\begin{equation*}
\begin{aligned}
\left|f^{'}(x_{0}, u)\right|&=\lim_{t_{n}\downarrow0} \left|\frac{f(x_{0}+t_{n}u)-f(x_{0})}{t_{n}}\right|\\
&\leq \max\left\{f(x_{0}+u)-f(x_{0}),f(x_{0}-u)-f(x_{0})\right\}\\
&\leq \varepsilon .
\end{aligned}
\end{equation*}
Thus $f^{'}(x_{0}, \cdot): E\rightarrow \bar{L}^0(\mathcal{F})$ is $\mathcal{T}_{c}-$continuous at $\theta$, which yields that $f^{'}(x_{0}, \cdot)$ is $\mathcal{T}_{c}-$continuous at any point $y\in E$ and $f^{'}(x_{0}, y)\in L^{0}(\mathcal{F})$.

This completes the proof.
\end{proof}

To prove Theorem \ref{the:5.7} below, we first give Lemma \ref{lem:5.6} below.

\begin{lemma}\label{lem:5.6}
Let $(E,\mathcal{P})$ be an $RLC$ module over $R$ with base $(\Omega,\mathcal{F},\mu)$, $f: E\rightarrow \bar{L}^0(\mathcal{F})$ a proper $L^{0}-$convex function such that $f$ is $\mathcal{T}_{c}-$continuous at $x_{0}\in E$. Then $$f(x_{0})+\lambda f^{'}(x_{0},y)\leq f(x_{0}+\lambda y)$$ for all $y\in E$ and $\lambda\in L^{0}(\mathcal{F})$.
\end{lemma}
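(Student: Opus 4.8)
The plan is to reduce everything to the strictly positive case and then stratify according to the sign of $\lambda$. First I would record that, since $f$ is $\mathcal{T}_{c}$--continuous at $x_{0}$, Remark \ref{rem:3.2} forces $x_{0}\in int(dom(f))$, so $f(x_{0})\in L^{0}(\mathcal{F})$, while Theorem \ref{the:5.5}(2) gives $f^{'}(x_{0},y)\in L^{0}(\mathcal{F})$; hence all the quantities below are finite and the $L^{0}$--arithmetic is unambiguous. By Lemma \ref{lem:5.1} and Remark \ref{rem:5.3} one has $f^{'}(x_{0},y)=\bigwedge\{\frac{f(x_{0}+\xi y)-f(x_{0})}{\xi}\mid \xi\in L_{++}^{0}(\mathcal{F})\}$, so for every $\xi\in L_{++}^{0}(\mathcal{F})$ the greatest--lower--bound property yields $f^{'}(x_{0},y)\leq \frac{f(x_{0}+\xi y)-f(x_{0})}{\xi}$; multiplying by $\xi>0$ gives the strictly positive case
\[
f(x_{0})+\xi f^{'}(x_{0},y)\leq f(x_{0}+\xi y)\qquad\text{for all }\xi\in L_{++}^{0}(\mathcal{F}).
\]

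Next I would treat an arbitrary $\lambda\in L^{0}(\mathcal{F})$ by partitioning $\Omega$ into $A_{+}=(\lambda>0)$, $A_{0}=(\lambda=0)$ and $A_{-}=(\lambda<0)$ and proving the desired inequality after multiplication by each indicator, then summing. On $A_{0}$ the local property of $f$ gives $\tilde{I}_{A_{0}}f(x_{0}+\lambda y)=\tilde{I}_{A_{0}}f(x_{0})$ while $\tilde{I}_{A_{0}}\lambda f^{'}(x_{0},y)=0$, so equality holds there. On $A_{+}$ I would apply the strictly positive case to $\xi:=\tilde{I}_{A_{+}}\lambda+\tilde{I}_{A_{+}^{c}}\cdot 1\in L_{++}^{0}(\mathcal{F})$ (the ``replace off the set by $1$'' device that keeps us inside $L_{++}^{0}(\mathcal{F})$); since $\tilde{I}_{A_{+}}\xi=\tilde{I}_{A_{+}}\lambda$, the local property gives $\tilde{I}_{A_{+}}f(x_{0}+\xi y)=\tilde{I}_{A_{+}}f(x_{0}+\lambda y)$, and multiplying the strictly positive inequality by $\tilde{I}_{A_{+}}$ produces $\tilde{I}_{A_{+}}(f(x_{0})+\lambda f^{'}(x_{0},y))\leq \tilde{I}_{A_{+}}f(x_{0}+\lambda y)$.

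The negative part is the main obstacle, and I would handle it by combining the positive case in the direction $-y$ with the subadditivity of $f^{'}(x_{0},\cdot)$. From Theorem \ref{the:5.5}(1), $f^{'}(x_{0},\cdot)$ is $L^{0}$--sublinear, so $0=f^{'}(x_{0},\theta)\leq f^{'}(x_{0},y)+f^{'}(x_{0},-y)$, i.e.\ $-f^{'}(x_{0},y)\leq f^{'}(x_{0},-y)$. Setting $\mu:=\tilde{I}_{A_{-}}(-\lambda)+\tilde{I}_{A_{-}^{c}}\cdot 1\in L_{++}^{0}(\mathcal{F})$ and applying the strictly positive case to $-y$ gives $f(x_{0})+\mu f^{'}(x_{0},-y)\leq f(x_{0}-\mu y)$; multiplying by $\tilde{I}_{A_{-}}$ and using $\tilde{I}_{A_{-}}\mu=-\tilde{I}_{A_{-}}\lambda$ together with the local property (so that $\tilde{I}_{A_{-}}f(x_{0}-\mu y)=\tilde{I}_{A_{-}}f(x_{0}+\lambda y)$) yields $\tilde{I}_{A_{-}}f(x_{0})-\tilde{I}_{A_{-}}\lambda f^{'}(x_{0},-y)\leq \tilde{I}_{A_{-}}f(x_{0}+\lambda y)$. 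Since $-\tilde{I}_{A_{-}}\lambda\geq 0$, multiplying $-f^{'}(x_{0},y)\leq f^{'}(x_{0},-y)$ by it gives $\tilde{I}_{A_{-}}\lambda f^{'}(x_{0},y)\leq -\tilde{I}_{A_{-}}\lambda f^{'}(x_{0},-y)$, whence $\tilde{I}_{A_{-}}(f(x_{0})+\lambda f^{'}(x_{0},y))\leq \tilde{I}_{A_{-}}f(x_{0}+\lambda y)$. Finally, adding the three indicator inequalities over the partition $\{A_{+},A_{0},A_{-}\}$ of $\Omega$ recovers $f(x_{0})+\lambda f^{'}(x_{0},y)\leq f(x_{0}+\lambda y)$ on $\Omega$. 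I expect the only delicate points to be keeping the stratification consistent and carefully tracking the sign when multiplying the subadditivity inequality by $-\tilde{I}_{A_{-}}\lambda$ in the negative case.
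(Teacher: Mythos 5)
Your proof is correct and follows essentially the same route as the paper's: the strictly positive case from the infimum representation of $f^{'}(x_{0},y)$, the negative case via the direction $-y$ together with subadditivity $0=f^{'}(x_{0},y-y)\leq f^{'}(x_{0},y)+f^{'}(x_{0},-y)$, and then stratification over the sign of $\lambda$. The only cosmetic difference is that the paper localizes by passing to the restricted module $(E_{A},\mathcal{P}_{A})$, whereas you stay in $E$ and use the ``replace $\lambda$ by $1$ off the set'' device; both are standard and equivalent here.
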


\begin{proof}
We proceed in these cases.

Case $1$. Let $\lambda\in L_{++}^{0}(\mathcal{F})$. Since $f^{'}(x_{0}, y)\leq \frac{f(x_{0}+\lambda y)-f(x_{0})}{\lambda}$, $\lambda f^{'}(x_{0}, y)+f(x_{0})\leq f(x_{0}+\lambda y)$ for all $y\in E$.

Case $2$. Let $\lambda\in L^{0}(\mathcal{F})$ be such that $\lambda <0$ on $\Omega$. Since $f^{'}(x_{0}, -y)\leq $\\ $\frac{f(x_{0}+(-\lambda)(-y))-f(x_{0})}{-\lambda}$ and $0=f^{'}(x_{0}, y-y)\leq f^{'}(x_{0}, y)+f^{'}(x_{0}, -y)$, one has that $\lambda f^{'}(x_{0}, y)=(-\lambda)\cdot (-f^{'}(x_{0}, y))\leq -\lambda f^{'}(x_{0}, -y)\leq f(x_{0}+\lambda y)-f(x_{0}),$ so $f(x_{0})+\lambda f^{'}(x_{0}, y)\leq f(x_{0}+\lambda y)$ for all $y\in E$.

Case $3$. Let $\lambda\in L^{0}(\mathcal{F})$. Let $A=(\lambda >0)$, $B=(\lambda =0)$ and $C=(\lambda <0)$. First, one can see from the local property of $f$ that $f(x_{0})+\lambda f^{'}(x_{0}, y)\leq f(x_{0}+\lambda y)$ on $B$. Then, as in the proof of Theorem \ref{the:5.5}, consider the $RLC$ module $(E_{A},\mathcal{P}_{A})$ and the $L^{0}(A\bigcap \mathcal{F})-$convex function $f_{A}: E_{A}\rightarrow \tilde{I}_{A}\bar{L}^0(\mathcal{F})$, one can see that $f(x_{0})+\lambda^{+}f^{'}(x_{0}, y)\leq f(x_{0}+\lambda^{+}y)$ on $A$ by Case $1$ we have proved. Similarly, by considering the corresponding case on $C$, one can see that $f(x_{0})-\lambda^{-}f^{'}(x_{0}, y)\leq f(x_{0}-\lambda^{-}y)$ on $C$ by Case $2$ we have proved, and hence $f(x_{0})+\lambda f^{'}(x_{0}, y)=\tilde{I}_{A}(f(x_{0})+\lambda f^{'}(x_{0}, y))+\tilde{I}_{B}(f(x_{0})+\lambda f^{'}(x_{0}, y))+\tilde{I}_{C}(f(x_{0})+\lambda f^{'}(x_{0}, y))\leq f(x_{0}+\lambda y)$ by the local property of $f$, where $\lambda^{+}=\lambda \bigvee 0$ and $\lambda^{-}=(-\lambda) \bigvee 0$.

This completes the proof.
\end{proof}

\begin{theorem}\label{the:5.7}
Let $(E,\mathcal{P})$ be an $RLC$ module over $R$ with base $(\Omega,\mathcal{F},\mu)$ and $f: E\rightarrow \bar{L}^0(\mathcal{F})$ a proper $L^{0}-$convex function. If $f$ is G\^{a}teaux-differentiable at $x_{0}\in E$, then it is subdifferentiable at $x_{0}$ and $\partial f(x_{0})=\{f^{'}(x_{0})\}$. Conversely, if $f$ is $\mathcal{T}_{c}-$continuous at $x_{0}$ and has only one subgradient, then $f$ is G\^{a}teaux-differentiable at $x_{0}$ and $\partial f(x_{0})=\{f^{'}(x_{0})\}$.
\end{theorem}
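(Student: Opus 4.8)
The plan is to use throughout the infimum representation $f'(x_0,y)=\bigwedge\{(f(x_0+\xi y)-f(x_0))/\xi : \xi\in L^0_{++}(\mathcal{F})\}$ supplied by Lemma \ref{lem:5.1} and Remark \ref{rem:5.3}, since it converts both implications into inequalities between $f'(x_0,\cdot)$ and the homomorphisms in $E^{\ast}_c$. For the forward direction, suppose $f$ is G\^ateaux-differentiable at $x_0$ with $u:=f'(x_0)\in E^{\ast}_c$ and $f'(x_0,y)=u(y)$ for all $y$. First I would take $\xi=1$ in the infimum to get $u(y)=f'(x_0,y)\leq f(x_0+y)-f(x_0)$; substituting $y=x-x_0$ yields $u(x-x_0)\leq f(x)-f(x_0)$, so $u\in\partial f(x_0)$ and $f$ is subdifferentiable. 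For uniqueness, let $g\in\partial f(x_0)$ be arbitrary; since $g$ is a module homomorphism, $g(\xi y)=\xi g(y)$, and the subgradient inequality at $x_0+\xi y$ gives $\xi g(y)\leq f(x_0+\xi y)-f(x_0)$, hence $g(y)\leq (f(x_0+\xi y)-f(x_0))/\xi$ for every $\xi\in L^0_{++}(\mathcal{F})$; taking the infimum over $\xi$ gives $g(y)\leq f'(x_0,y)=u(y)$. Replacing $y$ by $-y$ and using linearity of $g$ and $u$ forces $g=u$, so $\partial f(x_0)=\{f'(x_0)\}$. This half uses neither continuity of $f$ nor any separation theorem.

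For the converse, assume $f$ is $\mathcal{T}_c$-continuous at $x_0$ and that $\partial f(x_0)$ is a singleton $\{g_0\}$. By Theorem \ref{the:5.5}, $p:=f'(x_0,\cdot)$ is $L^0$-sublinear, $\mathcal{T}_c$-continuous and $L^0(\mathcal{F})$-valued. The computation just made shows $\partial f(x_0)=\{g\in E^{\ast}_c : g(y)\leq p(y)\text{ for all }y\in E\}$: the forward inclusion is the infimum argument, and the reverse follows from $g\leq p\leq f(x_0+\cdot)-f(x_0)$ (again via $\xi=1$). The goal is then to prove that $p$ is $L^0$-linear, for then $p\in E^{\ast}_c$, the identity $f'(x_0,y)=p(y)$ gives G\^ateaux-differentiability, and $f'(x_0)=g_0$ follows from the forward direction.

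The crux, and the step I expect to be the main obstacle, is an attainment lemma: for each $y_0\in E$ there is $g\in\partial f(x_0)$ with $g(y_0)=p(y_0)$. I would prove it by a Hahn--Banach-type separation mirroring the proof of Theorem \ref{the:4.3}. In $E\times L^0(\mathcal{F})$ (with the product $L^0$-seminorms) consider the $L^0$-convex set $C=epi(p)$, which has nonempty $\mathcal{T}_c$-interior because $p$ is $\mathcal{T}_c$-continuous, and the $L^0$-submodule $L=\{(\xi y_0,\xi p(y_0)) : \xi\in L^0(\mathcal{F})\}$. Sublinearity and locality of $p$ give $p(\xi y_0)\geq \xi p(y_0)$ for every $\xi\in L^0(\mathcal{F})$ (the case $\xi\geq 0$ by positive homogeneity, the case $\xi<0$ from $p(y_0)+p(-y_0)\geq 0$, and general $\xi$ by stratifying on $(\xi\geq 0)$ and $(\xi<0)$), whence $\tilde{I}_A int(C)\bigcap \tilde{I}_A L=\emptyset$ for all $A\in\mathcal{F}$ with $\mu(A)>0$. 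Proposition \ref{pro:4.2} then yields $(h_1,h_2)\in E^{\ast}_c\times L^0(\mathcal{F})$ separating $L$ from $int(C)$; letting $\xi$ range over all of $L^0(\mathcal{F})$ forces $h_1(y_0)+h_2 p(y_0)=0$, testing against $(0,b)$ with $b\in L^0_{++}(\mathcal{F})$ forces $h_2<0$ on $\Omega$, and then $g:=-h_1/h_2\in E^{\ast}_c$ satisfies $g\leq p$ and $g(y_0)=p(y_0)$, exactly as $v^{\ast}=g_1/\beta$ is extracted in Theorem \ref{the:4.3}.

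Finally I would combine attainment with uniqueness. Subadditivity gives $p(y)+p(-y)\geq 0$ for all $y$. If $p$ were not linear, there would be $y_0$ and $A$ with $\mu(A)>0$ on which $p(y_0)+p(-y_0)>0$, i.e. $p(y_0)>-p(-y_0)$ on $A$; applying attainment to $y_0$ and to $-y_0$ produces $g_1,g_2\in\partial f(x_0)$ with $g_1(y_0)=p(y_0)$ and $g_2(y_0)=-p(-y_0)$, which differ on $A$ and contradict $\partial f(x_0)=\{g_0\}$. Hence $p(-y)=-p(y)$ for all $y$; together with subadditivity this forces additivity, and with $L^0$-positive homogeneity full $L^0$-linearity of the $\mathcal{T}_c$-continuous $p$. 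Thus $p\in E^{\ast}_c$, $f$ is G\^ateaux-differentiable at $x_0$, and $\partial f(x_0)=\{f'(x_0)\}$. The genuinely delicate points will be the stratified disjointness $\tilde{I}_A int(C)\bigcap\tilde{I}_A L=\emptyset$ and the clean extraction of $g=-h_1/h_2$ from Proposition \ref{pro:4.2}, since the signed-scalar behaviour of the $L^0$-module must be handled by the usual $\tilde{I}_A$ stratification rather than by the scalar arguments available classically.
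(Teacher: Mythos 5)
Your proposal is correct and follows essentially the same route as the paper: the forward direction is the same monotone--quotient/infimum argument, and the converse is the same Hahn--Banach attainment scheme (separate an $L^0$-line on which the directional derivative is realized from an open $L^0$-convex epigraph via Proposition \ref{pro:4.2}, extract a subgradient attaining $f^{'}(x_{0},y_{0})$, and let the singleton hypothesis force linearity). The only cosmetic difference is that you separate the submodule $\{(\xi y_{0},\xi p(y_{0}))\}$ from $int(epi(p))$ with $p=f^{'}(x_{0},\cdot)$, using sublinearity of $p$ for the stratified disjointness, whereas the paper separates the affine line $L(y)=\{(x_{0}+\lambda y, f(x_{0})+\lambda f^{'}(x_{0},y))\}$ from $int(epi(f))$ itself, using its Lemma \ref{lem:5.6} for the same purpose.
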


\begin{proof}
If $f$ is G\^{a}teaux-differentiable at $x_{0}$. First, it is obvious that $f^{'}(x_{0})\in \partial f(x_{0})$: in fact, for all $y\in E$, since $f(y)-f(x_{0})\geq f^{'}(x_{0}, y-x_{0})=f^{'}(x_{0})(y-x_{0})$, then $f^{'}(x_{0})\in \partial f(x_{0})$. Then, let $g\in E_{c}^{*}$ be any element of $\partial f(x_{0})$, then for all $y\in E$ and any given sequence $\{t_{n}|n\in N\}$ of positive numbers such that $t_{n}\downarrow 0$, $f(x_{0}+t_{n}y)-f(x_{0})\geq t_{n} g(y)$ for all $n\in N$, so $$f^{'}(x_{0})(y)=\lim_{t_{n}\downarrow 0} \frac{f(x_{0}+t_{n}y)-f(x_{0})}{t_{n}}\geq g(y)$$ for all $y\in E$, which must implies that $f^{'}(x_{0})=g$. This completes the proof of the first part of the theorem.

Let us turn to the second part. Since $f$ is $\mathcal{T}_{c}-$continuous at $x_{0}$, then by $(2)$ of Theorem $\ref{the:5.5}$, $f^{'}(x_{0}, y)\in L^{0}(\mathcal{F})$ for all $y\in E$. Further, by Lemma $\ref{lem:5.6}$, $f(x_{0})+\lambda f^{'}(x_{0},y)\leq f(x_{0}+\lambda y)$ for all $y\in E$ and all $\lambda\in L^{0}(\mathcal{F})$. Let $L(y)=\{(x_{0}+\lambda y, f(x_{0})+\lambda f^{'}(x_{0},y))| \lambda\in L^{0}(\mathcal{F})\}$ for each $y\in E$. Since $int(epi(f))\neq \emptyset$ and $f$ is local, it is easy to verify, for each $y\in E$, that $L(y)$ is an $L^{0}-$convex set in $E\times L^{0}(\mathcal{F})$ and $\tilde{I}_{A}L(y)\bigcap\tilde{I}_{A} int(epi(f))=\emptyset$ for all $A\in \mathcal{F}$ with $\mu(A)>0$. As in the proof of Theorem \ref{the:4.3}, there exists $v_{y}^{*} \in E^{*}_c$ for each $y\in E$ such that $v_{y}^{*}(v_{2})+a_{2}\leq v_{y}^{*}(v_{1})+a_{1}$ for all $(v_{2},a_{2})\in L(y)$ and $(v_{1},a_{1})\in epi(f)$. Since $L(y)=(x_{0}, f(x_{0}))+\{\lambda(y, f^{'}(x_{0}, y))|\lambda\in L^{0}(\mathcal{F})\}$, $v_{y}^{*}$ is forced to satisfy that $v_{y}^{*}(y)+f^{'}(x_{0}, y)=0$. Further, $v_{y}^{*}(x_{0})+f(x_{0})\leq v_{y}^{*}(v)+f(v)$ for all $v\in dom(f)$, which means that $-v_{y}^{*}\in \partial f(x_{0})$. Since $\partial f(x_{0})$ is a singleton, for example, let $\partial f(x_{0})=\{u^{*}\}$ for some $u^{*}\in E_{c}^{*}$, then $-v_{y}^{*}=u^{*}$ for all $y\in E$, so that $u^{*}(y)=f^{'}(x_{0}, y)$ for all $y\in E$, namely, $f$ is G\^{a}teaux-differentiable at $x_{0}$ and $f^{'}(x_{0})=u^{*}$.

This completes the proof.
\end{proof}

Let $(E,\mathcal{P})$ be an $RLC$ module over $R$ with base $(\Omega,\mathcal{F},\mu)$ and $\mathcal{A}$ an $L^{0}-$convex subset of $E$. We say that a proper function $f: E\rightarrow \bar{L}^0(\mathcal{F})$ is:\\
$(1)$ $L^{0}-$convex on $\mathcal{A}$ if $f(\lambda x+(1-\lambda)y)\leq \lambda f(x)+(1-\lambda)y$ for all $x, y\in \mathcal{A}$ and $\lambda\in L_{+}^0(\mathcal{F})$ with $0\leq\lambda\leq1$;\\
$(2)$ strictly $L^{0}-$convex if $f(\lambda x+(1-\lambda)y)< \lambda f(x)+(1-\lambda)y$ on $(0<\lambda<1)\bigcap (d(x,y)>0)$ for all $x, y\in E$ and $\lambda\in L_{+}^0(\mathcal{F})$ with $0\leq\lambda\leq1$, where $d(x, y)=\bigvee \left\{\|x-y\|\left|\right.\|\cdot\|\in \mathcal{P}\right\}$. \\
Similar to the classical case (see, for instance, Propositions $5.4$ and $5.5$ of \cite{ET99}), one can have the following:

\begin{theorem}\label{the:5.8}
Let $(E,\mathcal{P})$ be an $RLC$ module over $R$ with base $(\Omega,\mathcal{F},\mu)$, $\mathcal{A}$ an $L^{0}-$convex subset of $E$ and $f: E\rightarrow \bar{L}^0(\mathcal{F})$ a proper function such that $f$ is G\^{a}teaux-differentiable on $\mathcal{A}$. Then we have the following statements:\\
$(1)$ $f$ is $L^{0}-$convex on $\mathcal{A}$ iff $f(y)\geq f(x)+f^{'}(x)(y-x)$ for all $x, y \in \mathcal{A}$;\\
$(2)$ If $f$ is $L^{0}-$convex on $\mathcal{A}$, then $f^{'}(\cdot): \mathcal{A}\rightarrow E_{c}^{*}$ is monotone, namely, $(f^{'}(x)-f^{'}(y))(x-y)\geq 0$ for all $x, y \in \mathcal{A}$;\\
$(3)$ $f$ is strictly $L^{0}-$convex on $\mathcal{A}$ iff $f(y)> f(x)+f^{'}(x)(y-x)$ on $(d(x, y)>0)$ for all $x, y \in \mathcal{A}$.
\end{theorem}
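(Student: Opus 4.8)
The plan is to transcribe the classical scheme (Propositions 5.4 and 5.5 of \cite{ET99}) into the $L^0$-setting, replacing real scalars by elements of $L^0(\mathcal{F})$ and keeping careful track of the $\mathcal{F}$-measurable sets on which each (in)equality holds. Note first that G\^ateaux-differentiability on $\mathcal{A}$ forces $\mathcal{A}\subset dom(f)$, so that together with properness $f(x)\in L^0(\mathcal{F})$ for every $x\in\mathcal{A}$ and all the differences below are well defined.

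For (1) I would first establish the forward implication. Assuming $f$ is $L^0$-convex on $\mathcal{A}$, for $x,y\in\mathcal{A}$ and a constant $t\in(0,1)$ the point $(1-t)x+ty$ lies in $\mathcal{A}$, and $L^0$-convexity yields $\frac{f(x+t(y-x))-f(x)}{t}\leq f(y)-f(x)$. Taking $t=t_n\downarrow 0$ and invoking Definition \ref{def:5.2} together with Remark \ref{rem:5.3}, so that $f'(x)(y-x)=f'(x,y-x)$ is the a.e. limit of these quotients, gives $f(y)\geq f(x)+f'(x)(y-x)$. For the converse, I would fix $x,y\in\mathcal{A}$ and $\lambda\in L^0_+(\mathcal{F})$ with $0\leq\lambda\leq 1$, set $z=\lambda x+(1-\lambda)y\in\mathcal{A}$, and apply the subgradient inequality at base point $z$ to both $x$ and $y$; since $x-z=(1-\lambda)(x-y)$, $y-z=-\lambda(x-y)$, and $f'(z)\in E_c^*$ is a module homomorphism, multiplying these two inequalities by $\lambda$ and $1-\lambda$ respectively and adding cancels the $f'(z)(x-y)$ terms and produces $f(z)\leq\lambda f(x)+(1-\lambda)f(y)$. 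Statement (2) is then immediate: adding $f(y)\geq f(x)+f'(x)(y-x)$ to $f(x)\geq f(y)+f'(y)(x-y)$ and rearranging yields $(f'(x)-f'(y))(x-y)\geq 0$.

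The genuinely delicate part is (3). For the forward direction I would combine the nonstrict bound $f(x+t(y-x))\geq f(x)+tf'(x)(y-x)$ from (1) with strict $L^0$-convexity at the constant $t\in(0,1)$, which (since $(0<t<1)=\Omega$) forces $f(x+t(y-x))<(1-t)f(x)+tf(y)$ on $(d(x,y)>0)$; chaining and dividing by $t>0$ gives $f(y)>f(x)+f'(x)(y-x)$ on $(d(x,y)>0)$. For the converse I would rerun the weighting argument with $z=\lambda x+(1-\lambda)y$, but now tracking where the two strict inequalities live. The key computation is that $\|z-x\|=(1-\lambda)\|x-y\|$ and $\|z-y\|=\lambda\|x-y\|$ for every $\|\cdot\|\in\mathcal{P}$, whence $(d(z,x)>0)=(\lambda<1)\cap(d(x,y)>0)$ and $(d(z,y)>0)=(\lambda>0)\cap(d(x,y)>0)$, so that both strict hypotheses $f(x)>f(z)+f'(z)(x-z)$ and $f(y)>f(z)+f'(z)(y-z)$ are available exactly on $(0<\lambda<1)\cap(d(x,y)>0)$.

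The main obstacle, which has no classical analogue, is that $\lambda$ is now an $L^0$-valued coefficient: multiplying a strict $L^0$-inequality by $\lambda$ (respectively $1-\lambda$) preserves strictness only on $(\lambda>0)$ (respectively $(\lambda<1)$). I must therefore check that both weighted inequalities remain strict on the common set $(0<\lambda<1)\cap(d(x,y)>0)$ before adding, and then use the local property of $f$ to patch the conclusion together over $\Omega$. Handling this stratification correctly is where the real work lies; the remaining manipulations are routine transcriptions of the classical argument.
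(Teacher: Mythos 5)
Your proposal is correct and coincides with the paper's intended argument: the paper gives no proof of Theorem \ref{the:5.8}, stating only that it follows ``similar to the classical case (Propositions 5.4 and 5.5 of \cite{ET99})'', and your transcription of that classical scheme --- the difference-quotient bound for the forward directions, the two subgradient inequalities at $z=\lambda x+(1-\lambda)y$ weighted by $\lambda$ and $1-\lambda$ for the converses, and the identification $(d(z,x)>0)=(\lambda<1)\cap(d(x,y)>0)$, $(d(z,y)>0)=(\lambda>0)\cap(d(x,y)>0)$ to preserve strictness on $(0<\lambda<1)\cap(d(x,y)>0)$ --- is exactly the required adaptation. The one point worth making explicit is that on $(d(x,y)=0)$ the locality of $f$ and of $f'(x)\in E_c^*$ forces equality in the subgradient inequality, so the strict hypothesis in (3) does yield the nonstrict inequality everywhere before you invoke part (1).
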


We will end the section with Theorem \ref{the:5.10} below which gives the relation between G\^{a}teaux-and Fr\'{e}ch\'{e}t-differentiability. For this, we need the following:

\begin{definition}\label{def:5.9}
Let $(E, \|\cdot\|)$ be an $RN$ module over $R$ with base $(\Omega,\mathcal{F},\mu)$ and $f: E\rightarrow \bar{L}^0(\mathcal{F})$ a proper and local function such that $f$ is G\^{a}teaux-differentiable at some $x_{0}\in dom(f)$. $f$ is said to be almost everywhere sequently continuously G\^{a}teaux-differentiable at $x_{0}$ if there exists some $\mathcal{T}_{c}-$neighborhood $V$ of $x_{0}$ such that $f$ is G\^{a}teaux-differentiable on $V$ and $\{f^{'}(x_{n})|n\in N\}$ converges to $f^{'}(x_{0})$ almost everywhere (namely $\{\|f^{'}(x_{n})-f^{'}(x_{0})\|\left|\right.n\in N\}$ converges to $0$ almost everywhere) whenever $\{x_{n}|n\in N\}$ is a sequence in $V$ such that $\left\{\|x_{n}-x_{0}\|\left|\right. n\in N\right\}$ converges to $0$ almost everywhere, where, for an element $g$ in $E_{c}^{*}$, the $L^{0}-$norm $\|g\|$ is defined by $\|g\|=\bigvee\left\{|g(x)|\left|\right.x\in E\right.$ and $\left.\|x\|\leq1\right\}$.
\end{definition}

Now, we can state Theorem \ref{the:5.10} as follows:

\begin{theorem}\label{the:5.10}
Let $(E, \|\cdot\|)$ be a $\mathcal{T}_{c}-$complete $RN$ module over $R$ with base $(\Omega,\mathcal{F},\mu)$ such that $E$ has the countable concatenation property and $f: E\rightarrow \bar{L}^0(\mathcal{F})$ a proper $L^{0}-$convex function such that $f$ is almost everywhere sequently continuously G\^{a}teaux-differentiable at some $x_{0}\in dom(f)$. Then $f$ is Fr\'{e}ch\'{e}t - differentiable at $x_{0}$.
\end{theorem}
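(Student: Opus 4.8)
The plan is to exploit the $L^{0}$-convexity of $f$ to sandwich the Fr\'{e}ch\'{e}t difference quotient between $0$ and the oscillation of the G\^{a}teaux derivative, and then to use a stratification (truncation) argument to reconcile the $\mathcal{T}_{c}$-local hypothesis with the almost-everywhere (i.e. $\mathcal{T}_{\varepsilon,\lambda}$-type) nature of Fr\'{e}ch\'{e}t differentiability. First I would reduce the neighbourhood $V$ of Definition \ref{def:5.9} to a closed $L^{0}$-ball: since $\{x_{0}+U(\varepsilon)\ |\ \varepsilon\in L_{++}^{0}(\mathcal{F})\}$, with $U(\varepsilon)=\{x\in E\ |\ \|x\|\leq\varepsilon\}$, is a local base at $x_{0}$ for $\mathcal{T}_{c}$, I may assume $V=x_{0}+U(\varepsilon)$ for some $\varepsilon\in L_{++}^{0}(\mathcal{F})$, an $L^{0}$-convex set with the countable concatenation property on which $f$ is G\^{a}teaux-differentiable (so $V\subset dom(f)$ and $f$ is finite there).

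Next I would record the two one-sided estimates furnished by convexity. By the first part of Theorem \ref{the:5.7}, $f^{'}(x_{0})\in\partial f(x_{0})$, so the global lower bound $f(x_{0}+h)-f(x_{0})-f^{'}(x_{0})(h)\geq 0$ holds for all $h\in E$. Symmetrically, at any point $z\in V$ the same theorem gives $f^{'}(z)\in\partial f(z)$, whence $f(x_{0})-f(z)\geq f^{'}(z)(x_{0}-z)$, that is $f(z)-f(x_{0})\leq f^{'}(z)(z-x_{0})$. Writing $z=x_{0}+h$ and subtracting $f^{'}(x_{0})(h)$ yields the upper bound $f(x_{0}+h)-f(x_{0})-f^{'}(x_{0})(h)\leq (f^{'}(x_{0}+h)-f^{'}(x_{0}))(h)$, valid whenever $x_{0}+h\in V$.

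The key step is to handle a general sequence $\{h_{n}\ |\ n\in N\}$ with $\{\|h_{n}\|\ |\ n\in N\}$ converging to $0$ almost everywhere. Since a.e. convergence does not force $x_{0}+h_{n}\in V$, I set $A_{n}=(\|h_{n}\|\leq\varepsilon)$ and truncate $\tilde{h}_{n}:=\tilde{I}_{A_{n}}h_{n}$, so that $\|\tilde{h}_{n}\|=\tilde{I}_{A_{n}}\|h_{n}\|\leq\varepsilon$ and hence $x_{0}+\tilde{h}_{n}\in V$. Applying the sandwich to $\tilde{h}_{n}$ and using $|g(x)|\leq\|g\|\,\|x\|$ for $g\in E_{c}^{*}$, I obtain
\[
0\leq f(x_{0}+\tilde{h}_{n})-f(x_{0})-f^{'}(x_{0})(\tilde{h}_{n})\leq \|f^{'}(x_{0}+\tilde{h}_{n})-f^{'}(x_{0})\|\,\|\tilde{h}_{n}\|.
\]
Because $\|\tilde{h}_{n}\|\leq\|h_{n}\|\to 0$ a.e. and $x_{0}+\tilde{h}_{n}\in V$, the almost-everywhere sequent continuity of the G\^{a}teaux derivative (Definition \ref{def:5.9}) gives that $\{\|f^{'}(x_{0}+\tilde{h}_{n})-f^{'}(x_{0})\|\ |\ n\in N\}$ converges to $0$ a.e. Dividing by $\|\tilde{h}_{n}\|$ (with the convention $0/0=0$), and using the locality of $f$ together with the module-homomorphism property of $f^{'}(x_{0})$ to replace $\tilde{h}_{n}$ by $h_{n}$ after multiplication by $\tilde{I}_{A_{n}}$, I arrive at $0\leq\tilde{I}_{A_{n}}R_{n}\leq\tilde{I}_{A_{n}}\|f^{'}(x_{0}+\tilde{h}_{n})-f^{'}(x_{0})\|$, where $R_{n}$ denotes the Fr\'{e}ch\'{e}t difference quotient for $h_{n}$.

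Finally I would let $n\to\infty$. Since $\varepsilon>0$ on $\Omega$ and $\|h_{n}\|\to 0$ a.e., for almost every $\omega$ one has $\omega\in A_{n}$ for all large $n$, so $\tilde{I}_{A_{n}}\to 1$ a.e.; combined with the global lower bound $R_{n}\geq 0$ and the upper bound above, this forces $\{R_{n}\ |\ n\in N\}$ to converge to $0$ a.e., which is exactly Fr\'{e}ch\'{e}t-differentiability of $f$ at $x_{0}$ with $\nabla f(x_{0})=f^{'}(x_{0})$. The main obstacle is precisely the mismatch between the two topologies: the hypothesis controls $f^{'}$ only on a $\mathcal{T}_{c}$-neighbourhood, whereas Fr\'{e}ch\'{e}t-differentiability probes $\mathcal{T}_{\varepsilon,\lambda}$-null (a.e.-null) directions. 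The truncation $\tilde{h}_{n}=\tilde{I}_{A_{n}}h_{n}$ together with $\tilde{I}_{A_{n}}\to 1$ a.e. is the device that overcomes it, and the only remaining care is the bookkeeping of the $\tilde{I}_{A_{n}}$-stratification through the convex inequalities via the locality of $f$.
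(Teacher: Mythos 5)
Your proposal is correct, and it reaches the paper's key estimate by a genuinely different and more elementary route. The paper's proof also reduces to the ball $B_{\varepsilon}(x_{0})$ and uses the same truncation $\overline{h}_{n}=\tilde{I}_{A_{n}}h_{n}$ with $A_{n}=(\|h_{n}\|\leq\varepsilon)$ in its Case 2, but for the central inequality it introduces the auxiliary function $H(t)=f(x_{0}+th_{n})$ on $[0,1]$, invokes the Newton--Leibniz formula for Riemann integration in $\mathcal{T}_{\varepsilon,\lambda}$-complete $RN$ modules (Lemma \ref{lem:5.12}, after noting that $\mathcal{T}_{c}$-completeness plus the countable concatenation property yields $\mathcal{T}_{\varepsilon,\lambda}$-completeness), and uses monotonicity of $H^{'}$ to bound
$|f(x_{0}+h_{n})-f(x_{0})-f^{'}(x_{0})(h_{n})|\leq f^{'}(x_{0}+h_{n})(h_{n})-f^{'}(x_{0})(h_{n})\leq\|f^{'}(x_{0}+h_{n})-f^{'}(x_{0})\|\,\|h_{n}\|$. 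You obtain exactly the same two-sided bound directly from the two subgradient inequalities: $f^{'}(x_{0})\in\partial f(x_{0})$ gives the lower bound $0$, and $f^{'}(x_{0}+h)\in\partial f(x_{0}+h)$ (both via the first part of Theorem \ref{the:5.7}, which only needs Lemma \ref{lem:5.1}) gives the upper bound $(f^{'}(x_{0}+h)-f^{'}(x_{0}))(h)$. This buys a real simplification: your argument dispenses entirely with the Riemann calculus of \cite{GZ12} and, as far as this estimate is concerned, with the completeness and countable concatenation hypotheses (which the paper needs only to legitimize Lemma \ref{lem:5.12}); the paper's integral representation, on the other hand, is the more robust tool if one wants quantitative mean-value-type information about $f$ along segments rather than just the sandwich at the endpoints. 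The remaining bookkeeping --- the inequality $|g(x)|\leq\|g\|\,\|x\|$ for $g\in E_{c}^{*}$, the locality argument identifying $\tilde{I}_{A_{n}}R_{n}$ with the truncated quotient, and the observation that almost every $\omega$ lies in $A_{n}$ eventually --- matches the paper's Case 2 and is handled correctly.
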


The proof of Theorem \ref{the:5.10} needs some work on Riemann calculus for abstract functions from a finite real interval to an $RN$ module, which was first established by Guo and Zhang in \cite{GZ12}. Let us first recall: let $[a, b]$ be a closed finite real interval, $(E, \|\cdot\|)$ an $RN$ module over $K$ with base $(\Omega,\mathcal{F},\mu)$ and $f: [a, b]\rightarrow E$ an $E-$valued function on $[a, b]$. For any partition $\triangle: a=t_{0}<t_{1}<t_{2}<\cdots<t_{n-1}<t_{n}=b$, $\xi_{i}\in[t_{i-1}, t_{i}]$ for all $i$ such that $1\leq i \leq n$. Let $\|\triangle\|=\max_{1\leq i \leq n }(t_{i}-t_{i-1})$ and $\mathcal{R}(f, \triangle, \{\xi_{i}\}_{i=1}^{n})=\Sigma_{i=1}^{n} f(\xi_{i})(t_{i}-t_{i-1})$.

\begin{definition} \cite{GZ12}. \label{def:5.11}
Let $[a, b]$ be a closed finite real interval, $(E, \|\cdot\|)$ a $\mathcal{T}_{\varepsilon, \lambda}-$complete $RN$ module over $K$ with base $(\Omega,\mathcal{F},\mu)$, $f: [a, b]\rightarrow E$ an $E-$valued function defined on $[a, b]$ and $\hat{\mu}$ be the probability measure associated with $\mu$.\\
$(1)$ $f$ is said to be Riemann integrable if there exists $I\in E$ with the property: there exists a positive number $\delta$ for any given positive numbers $\varepsilon$ and $\lambda$ with $0<\lambda<1$ such that $$\hat{\mu}\left\{\omega\in\Omega\left|\right.\|\mathcal{R}(f, \triangle, \{\xi_{i}\}_{i=1}^{n})-I\|(\omega)<\varepsilon\right\}>1-\lambda$$ whenever $\|\triangle\|<\delta$ for an arbitrary partition $\triangle: a=t_{0}<t_{1}<\cdots<t_{n}=b$, and an arbitrary choice of $\{\xi_{i}|1\leq i \leq n\}$ such that $\xi\in [t_{i-1}, t_{i}]$ with $1\leq i \leq n$, at which time $I$ is called the Riemann integration of $f$ over $[a, b]$, denoted by ${\int_{a}^{b} f(t) dt}$. \\
$(2)$ $f$ is said to be Riemann derivable at some $t_{0}\in [a, b]$ if $\lim\limits_{n \to \infty}{\frac{f(t_{n})-f(t_{0})}{t_{n}-t_{0}}}$ exists (denoted by $f^{'}(t_{0})$) with respect to $\mathcal{T}_{\varepsilon, \lambda}$ for any sequence $\{t_{n}|n\in N\}$ in $[a, b]$ such that $t_{n}\rightarrow t_{0}$. Further, $f$ is said to be Riemann derivable on $[a, b]$ if it is Riemann derivable at every point $t\in [a, b]$.
\end{definition}

It is easy to see that a monotone function $f: [a, b]\rightarrow L^{0}(\mathcal{F})$ (namely, $f(t_{1})\leq f(t_{2})$ for any $t_{1}, t_{2}\in [a,b]$ such that $t_{1}\leq t_{2}$) is Riemann integrable. In \cite{GZ12}, Guo and Zhang provide another class of Riemann integrable functions: if $f$ is a continuous function from $[a, b]$ to a $\mathcal{T}_{\varepsilon, \lambda}-$complete $RN$ module $(E, \|\cdot\|)$ endowed with the $(\varepsilon, \lambda)-$topology and satisfies the condition that $\bigvee\left\{\|f(t)\|\left|\right.t\in [a, b]\right\}\in L_{+}^{0}(\mathcal{F})$, then $f$ is Riemann integrable. In particular, the following Newton-Leibniz formula was established in \cite{GZ12}.

\begin{lemma} \cite{GZ12}. \label{lem:5.12}
Let $[a, b]$ be a closed finite real interval, $(E, \|\cdot\|)$ a $\mathcal{T}_{\varepsilon, \lambda}-$comp-lete $RN$ module over $K$ with base $(\Omega,\mathcal{F},\mu)$ and $f: [a, b]\rightarrow E$ a Riemann derivable function on $[a, b]$ such that $f^{'}(\cdot): [a, b]\rightarrow E$ is Riemann integrable and such that $$\bigvee\left\{\frac{\|f(t_{2})-f(t_{1})\|}{t_{2}-t_{1}}\left|\right. t_{1}, t_{2}\in [a, b] \mbox{and}~t_{1}\neq t_{2}\right\}\in L_{+}^{0}(\mathcal{F}).$$ Then $$f(b)-f(a)={\int_{a}^{b} f^{'}(t) dt}.$$
\end{lemma}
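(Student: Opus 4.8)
The plan is to reduce the module-valued identity to the scalar ($L^0(\mathcal{F})$-valued) case by composing with random functionals, and then to run the \emph{classical} Newton--Leibniz formula $\omega$-by-$\omega$; the uniform Lipschitz hypothesis $L:=\bigvee\{\|f(t_2)-f(t_1)\|/|t_2-t_1|:t_1\ne t_2\}\in L^0_+(\mathcal{F})$ is exactly what legitimises both reductions. First I would observe that, by the Hahn--Banach theorem for $RN$ modules (the random conjugate space separates points, indeed $\|x\|=\bigvee\{|g(x)|:g\in E^{\ast}_{\varepsilon,\lambda},\ \|g\|\le 1\}$ for all $x\in E$), to prove $f(b)-f(a)=\int_a^b f'(t)\,dt=:I$ it suffices to show $g(f(b))-g(f(a))=g(I)$ for every $g\in E^{\ast}_{\varepsilon,\lambda}$. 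Fixing such a $g$ and setting $\psi:=g\circ f:[a,b]\to L^0(\mathcal{F})$, the $\mathcal{T}_{\varepsilon,\lambda}$-continuity and $L^0$-linearity of $g$ give that $\psi$ is Riemann derivable with $\psi'(t)=g(f'(t))$, that $\psi'$ is Riemann integrable with $\int_a^b\psi'(t)\,dt=g(I)$ (a continuous module homomorphism carries $\mathcal{R}(f',\triangle,\{\xi_i\})$ to $\mathcal{R}(g\circ f',\triangle,\{\xi_i\})$ and commutes with the $\mathcal{T}_{\varepsilon,\lambda}$-limit), and that $\psi$ inherits the bound $\Lambda:=\bigvee\{|\psi(t_2)-\psi(t_1)|/|t_2-t_1|\}\le\|g\|\cdot L\in L^0_+(\mathcal{F})$. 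Thus the whole problem is reduced to an $L^0(\mathcal{F})$-valued $\psi$ satisfying the hypotheses of the lemma.

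For the scalar case I would build a good representative. From $\Lambda$ one gets $|\psi(t_2)-\psi(t_1)|\le\Lambda|t_2-t_1|$ for all $t_1,t_2$, and letting $s\to t$ in the difference quotient (the $L^0$-norm being $\mathcal{T}_{\varepsilon,\lambda}$-continuous) one gets $|\psi'(t)|\le\Lambda$ for every $t$; in particular $\psi$ is $\mathcal{T}_{\varepsilon,\lambda}$-continuous in $t$. Choosing representatives along a countable dense set $D\subset[a,b]$ and deleting a single $\mu$-null set, $t\mapsto\psi(t)(\omega)$ is $\Lambda(\omega)$-Lipschitz on $D$ for almost every $\omega$; I would extend it by uniform continuity to $\phi_\omega\in C([a,b])$, still $\Lambda(\omega)$-Lipschitz, and check that $\phi_\omega(t)=\psi(t)(\omega)$ almost surely for each fixed $t$ (both sides are continuous in $t$ and agree on $D$), so that $(t,\omega)\mapsto\phi_\omega(t)$ is a jointly measurable version of $\psi$ whose $\omega$-paths are Lipschitz.

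Next I would apply the classical formula and match the two calculi. For almost every $\omega$ the path $\phi_\omega$ is absolutely continuous, so $\phi_\omega(b)-\phi_\omega(a)=\int_a^b\phi_\omega'(t)\,dt$ with $\phi_\omega'$ existing for a.e.\ $t$ and $|\phi_\omega'|\le\Lambda(\omega)$. It then remains to identify $\phi_\omega'(t)=\psi'(t)(\omega)$ for a.e.\ $(t,\omega)$ and $\bigl(\int_a^b\psi'(t)\,dt\bigr)(\omega)=\int_a^b\psi'(t)(\omega)\,dt$ for a.e.\ $\omega$. For the first, for each fixed $t$ the quotients $(\psi(s)-\psi(t))/(s-t)$ converge in measure to $\psi'(t)$ while, $\omega$-wise, they converge for a.e.\ $t$ to $\phi_\omega'(t)$, so passing to a.e.-convergent subsequences and dominating by $\Lambda$ a Fubini argument on $[a,b]\times\Omega$ yields the equality. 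For the second, the Riemann sums of $\psi'$ converge in measure to $\int_a^b\psi'$ and, $\omega$-wise, to the Lebesgue integral $\int_a^b\psi'(t)(\omega)\,dt$, reconciled again by domination through $\Lambda$. Combining these gives $\psi(b)(\omega)-\psi(a)(\omega)=\int_a^b\psi'(t)(\omega)\,dt=\bigl(\int_a^b\psi'\bigr)(\omega)$ for a.e.\ $\omega$, i.e.\ $\psi(b)-\psi(a)=\int_a^b\psi'$, and undoing the reduction of the first paragraph yields the lemma.

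The hard part will be the matching step: interchanging the $\mathcal{T}_{\varepsilon,\lambda}$ (in-measure) notions of derivative and Riemann integral with the $\omega$-wise classical (a.e.-$t$) ones. This is precisely where the Lipschitz hypothesis is indispensable, since it forces the $\omega$-paths to be genuinely Lipschitz (hence absolutely continuous and a.e.\ differentiable) and supplies the uniform bound $\Lambda$ needed for the Fubini/subsequence interchange. I expect it to be the decisive ingredient, because the more direct route --- telescoping $f(b)-f(a)=\sum_i\bigl(f(t_i)-f(t_{i-1})\bigr)$, comparing with $\mathcal{R}(f',\triangle,\{\xi_i\})$, and bounding each term by a straddle estimate at the tag $\xi_i$ --- fails without it: since Riemann derivability holds only in $\mathcal{T}_{\varepsilon,\lambda}$, the exceptional $\omega$-sets attached to the pointwise differentiability estimates accumulate uncontrollably as the partition is refined.
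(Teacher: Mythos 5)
First, a point of reference: the paper does not prove Lemma \ref{lem:5.12} at all --- it is quoted from \cite{GZ12} as a known Newton--Leibniz formula --- so there is no in-paper argument to compare yours against; I can only assess your proposal on its own terms. Your overall architecture is reasonable: the reduction to the scalar case via $\|x\|=\bigvee\{|g(x)|:g\in E^{\ast}_{\varepsilon,\lambda},\,\|g\|\le 1\}$ is legitimate (the estimate $|g(x)|\le\|g\|\,\|x\|$ does transfer both the Riemann derivability and the Riemann integrability through $g$), the construction of a jointly measurable version with $\Lambda(\omega)$-Lipschitz paths is sound, and your ``matching step 1'' (identifying $\psi'(t)$ with $\phi_\omega'(t)$ for a.e.\ $t$ via Fubini and uniqueness of limits in measure) goes through.

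The genuine gap is in your ``matching step 2''. You assert that the Riemann sums of $\psi'$ converge $\omega$-wise to the Lebesgue integral $\int_a^b\phi_\omega'(t)\,dt$, and that domination by $\Lambda$ reconciles this with the in-measure convergence to $\int_a^b\psi'$. That $\omega$-wise claim is false in general: the pathwise derivative $\phi_\omega'$ of a Lipschitz function is merely bounded and measurable, and by Lebesgue's criterion its Riemann sums with arbitrary tags need not converge at all, let alone to its Lebesgue integral --- take $\phi(t)=\int_0^t\chi_F(s)\,ds$ for a fat Cantor set $F$, so that $\phi'=\chi_F$ a.e.\ is discontinuous on a set of positive Lebesgue measure. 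Nothing in the hypotheses gives you pathwise continuity of $\psi'$, so domination alone cannot rescue the interchange; as written, the last link in your chain (the one that actually identifies the $RN$-module Riemann integral with the pathwise Lebesgue integral, and hence with $\psi(b)-\psi(a)$) is unsupported. The step is repairable, but it needs a different idea: for instance, for a fixed partition $\triangle$ integrate the Riemann sum over all choices of tags $\xi_i\in[t_{i-1},t_i]$ (normalised); by Fubini and the identification $\psi'(\xi)(\omega)=\phi_\omega'(\xi)$ a.e., this tag-average equals exactly $\int_a^b\phi_\omega'(t)\,dt$ for a.e.\ $\omega$, independently of $\triangle$, while the definition of Riemann integrability gives that \emph{every} tagged sum with mesh $<\delta$ is within $\varepsilon$ of $\int_a^b\psi'$ off a set of $\hat\mu$-measure $\lambda$; combining these through the metric $E_{\hat\mu}[\,|\cdot|\wedge 1\,]$ of convergence in probability, together with the uniform bound $|\mathcal{R}|\le\Lambda(b-a)$, forces $\int_a^b\phi_\omega'(t)\,dt=\bigl(\int_a^b\psi'\bigr)(\omega)$ a.e. With that substitution your proof closes; without it, it does not.
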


Now, we can prove Theorem \ref{the:5.10}.

\noindent\textbf{Proof of Theorem \ref{the:5.10}.} Let $\{h_{n}|n\in N\}$ be any sequence in $E$ such that $\{\|h_{n}\|\left|\right. n\in N\}$ converges to $0$ almost everywhere, we only need to prove that $\frac{|f(x_{0}+h_{n})-f(x_{0})-f^{'}(x_{0})(h_{n})|}{\|h_{n}\|}$ converges to $0$ almost everywhere.

Since $f$ is almost everywhere sequently continuously G\^{a}teaux-differentiable at $x_{0}$, there exists $\varepsilon\in L_{++}^{0}(\mathcal{F})$ such that $f^{'}(\cdot): B_{\varepsilon}(x_{0}):=\left\{y\in E\left|\right.\|y-x_{0}\|\leq\varepsilon\right\}\rightarrow E_{c}^{*}$ is almost everywhere sequently continuous at $x_{0}$. We will proceed in two cases as follows.

Case $1$. Let each $h_{n}$ be such that $\|h_{n}\|\leq\varepsilon$ for all $n\in N$. Now, fix $n$, since $x_{0}+th_{n}=(1-t)x_{0}+t(x_{0}+h_{n})\in B_{\varepsilon}(x_{0})$, then the function $H: [0, 1]\rightarrow L^{0}(\mathcal{F})$, defined by $H(t)=f(x_{0}+th_{n})$ for all $t\in [0,1]$, is Riemann derivable $[0, 1]$ and $H^{'}(t)=f^{'}(x_{0}+th_{n})(h_{n})$ for all $t\in [0, 1]$. It is easy to see that $H$ is convex, namely $H(\lambda t_{1}+(1-\lambda)t_{2})\leq \lambda H(t_{1})+(1-\lambda)H(t_{2})$ for all $t_{1}, t_{2}\in [0,1]$ and $\lambda\in[0, 1]$. Similar to the real-valued convex function (see, for instance, \cite{AB06}), it is easy to verify that $H^{'}(\cdot)$ is monotone and $$f^{'}(x_{0})(h_{n})\leq \frac{H(t_{2})-H(t_{1})}{t_{2}-t_{1}}\leq f^{'}(x_{0}+h_{n})(h_{n})$$ for all $t_{1}, t_{2}\in [0, 1]$ such that $t_{1}\neq t_{2}$, so $$\bigvee\left\{\frac{|H(t_{2})-H(t_{1})|}{|t_{2}-t_{1}|}\left|\right.t_{1}, t_{2}\in [0, 1]~\mbox{and}~t_{1}\neq t_{2}\right\}\leq \left|f^{'}(x_{0})(h_{n})\right|\bigvee \left|f^{'}(x_{0}+h_{n})(h_{n})\right|.$$

Since $(E, \|\cdot\|)$ is $\mathcal{T}_{\varepsilon, \lambda}-$complete iff both $E$ is $\mathcal{T}_{c}-$complete and $E$ has the countable concatenation property, which has been proved in \cite{Guo10}, we can employ Lemma \ref{lem:5.12}. Then, by Lemma \ref{lem:5.12}, $$f(x_{0}+h_{n})-f(x_{0})={\int_{0}^{1} f^{'}(x_{0}+th_{n})(h_{n})dt},$$ and hence
\begin{equation*}
\begin{aligned}
\left|f(x_{0}+h_{n})-f(x_{0})-f^{'}(x_{0})(h_{n})\right|
&\leq \int_{0}^{1} \left|f^{'}(x_{0}+th_{n})(h_{n})-f^{'}(x_{0})(h_{n})\right| dt\\
&={\int_{0}^{1} \left(f^{'}(x_{0}+th_{n})(h_{n})-f^{'}(x_{0})(h_{n})\right)dt}\\
&\leq f^{'}(x_{0}+h_{n})(h_{n})-f^{'}(x_{0})(h_{n})\\
&\leq \left\|f^{'}(x_{0}+h_{n})-f^{'}(x_{0})\right\|\|h_{n}\|,
\end{aligned}
\end{equation*}
which shows that $$\frac{\left|f(x_{0}+h_{n})-f(x_{0})-f^{'}(x_{0})(h_{n})\right|}{\|h_{n}\|}\leq \left\|f^{'}(x_{0}+h_{n})-f^{'}(x_{0})\right\|,$$ which converges to $0$ almost everywhere as $n$ tends to $\infty$.

Case $2$. Let $A_{n}=(\|h_{n}\|\leq\varepsilon)$ for each $n\in N$ and define $\overline{h}_{n}=\tilde{I}_{A_{n}}h_{n}+\tilde{I}_{A_{n}^{c}}\cdot 0$ for each $n\in N$. Then $\{\|\overline{h}_{n}\|\left|\right. n\in N\}$ still converges to $0$ almost everywhere and $\|\overline{h}_{n}\|\leq \varepsilon$ for each $n\in N$. By Case $1$ we have proved $\frac{|f(x_{0}+\overline{h}_{n})-f(x_{0})-f^{'}(x_{0})(\overline{h}_{n})|}{\|\overline{h}_{n}\|}$ converges to $0$ almost everywhere as $n \rightarrow \infty$. When one observes that $$\frac{f(x_{0}+h_{n})-f(x_{0})-f^{'}(x_{0})(h_{n})}{\|h_{n}\|}=\frac{f(x_{0}+\overline{h}_{n})-f(x_{0})-f^{'}(x_{0})(\overline{h}_{n})}{\|\overline{h}_{n}\|}$$ on $A_{n}$ for each $n\in N$ (by the local property of $f$), it is also clear that\\ $\frac{|f(x_{0}+h_{n})-f(x_{0})-f^{'}(x_{0})(h_{n})|}{\|h_{n}\|}$ converges to $0$ almost everywhere as $n\rightarrow \infty$ since $A_{n}\uparrow \Omega$.

This completes the proof.  \qed

\section{Subdifferentials and $\varepsilon$-Subdifferentials}
\label{sec:6}

The main results of this section are Theorems \ref{the:6.3} and \ref{the:6.4} below, which are based on the Ekeland's variational principle established by Guo and Yang in \cite{GY12} for an $\bar{L}^{0}-$valued function on complete random metric spaces. Proposition \ref{pro:6.1} below is only a special case of Theorem $3.10$ of \cite{GY12} but it has met the needs of this section.

\begin{proposition} \cite{GY12}. \label{pro:6.1}
Let $(E, \|\cdot\|)$ be a $\mathcal{T}_{c}-$complete $RN$ module over $R$ with base $(\Omega,\mathcal{F},\mu)$ such that $E$ has the countable concatenation property, $\varepsilon\in L_{++}^{0}(\mathcal{F})$, $f: E\rightarrow \bar{L}^{0}(\mathcal{F})$ is local, $\mathcal{T}_{c}-$lower semicontinuous and bounded from below (namely, $\bigwedge \{f(x)|x\in E\}\in L^{0}(\mathcal{F})$), and $x_{0}\in E$ such that $f(x_{0})\leq\bigwedge \{f(x)|x\in E\}+\varepsilon$. Then for each $\lambda\in L_{++}^{0}(\mathcal{F})$ there exists $x_{\lambda}\in E$ such that the following conditions are satisfied:\\
$(1)$ $f(x_{\lambda})\leq f(x_{0})-\frac{\varepsilon}{\lambda}\|x_{\lambda}-x_{0}\|$;\\
$(2)$ $\|x_{\lambda}-x_{0}\|\leq \lambda$;\\
$(3)$ $\frac{\varepsilon}{\lambda}\|x_{\lambda}-x\|+f(x)\nleq f(x_{\lambda})$ for each $x\in E$ such that $x\neq x_{\lambda}$.
\end{proposition}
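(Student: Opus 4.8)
The plan is to adapt the classical iterative proof of Ekeland's variational principle to the module setting, replacing the total order on $[-\infty,+\infty]$ by the partial order on $\bar{L}^0(\mathcal{F})$ and using the countable concatenation property to carry out the ``gluing'' that a bare partial order would otherwise forbid. Introduce on $E$ the relation $x\preceq y$ defined by $\frac{\varepsilon}{\lambda}\|x-y\|+f(x)\leq f(y)$. The triangle inequality for $\|\cdot\|$ together with $\frac{\varepsilon}{\lambda}\in L^0_{++}(\mathcal{F})$ shows that $\preceq$ is reflexive, transitive and antisymmetric, so it is a genuine partial order. Note that conditions $(1)$ and $(2)$ together say $x_{\lambda}\preceq x_0$ with $\|x_{\lambda}-x_0\|\leq\lambda$, while condition $(3)$ says precisely that $x_{\lambda}$ is $\preceq$-minimal, i.e. no $x\neq x_{\lambda}$ satisfies $x\preceq x_{\lambda}$.

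First I would build a $\preceq$-decreasing sequence $\{x_n\}$ starting at the given $x_0$. Given $x_n$, set $S_n=\{x\in E\ |\ x\preceq x_n\}$. Since $f$ is $\mathcal{T}_c$-lower semicontinuous and $\|\cdot\|$ is $\mathcal{T}_c$-continuous, $S_n$ is $\mathcal{T}_c$-closed; the local property of $f$ together with $\|\tilde{I}_A(x-x_n)+\tilde{I}_{A^c}(y-x_n)\|=\tilde{I}_A\|x-x_n\|+\tilde{I}_{A^c}\|y-x_n\|$ shows that $S_n$ is stable under countable gluing, hence has the countable concatenation property, and that $\{f(x)\ |\ x\in S_n\}$ is directed downwards with essential infimum $m_n:=\bigwedge\{f(x)\ |\ x\in S_n\}\in L^0(\mathcal{F})$ (finiteness uses boundedness below and $f(x_n)\leq f(x_0)<+\infty$). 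Choosing $\{y_k\}\subseteq S_n$ with $f(y_k)\downarrow m_n$, setting $B_k=(f(y_k)\leq m_n+\frac{\varepsilon}{2^n})$ and $C_k=B_k\setminus\bigcup_{j<k}B_j$ (so $\{C_k\}$ partitions $\Omega$ a.e. because $\bigcup_k B_k=\Omega$ a.e.), the countable concatenation property produces $x_{n+1}:=\sum_k\tilde{I}_{C_k}y_k\in S_n$ with $f(x_{n+1})\leq m_n+\frac{\varepsilon}{2^n}$ by locality.

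Next I would extract the decisive diameter estimate: for any $x\in S_{n+1}\subseteq S_n$ one has $f(x)\geq m_n$ and $x\preceq x_{n+1}$, whence $\frac{\varepsilon}{\lambda}\|x-x_{n+1}\|\leq f(x_{n+1})-f(x)\leq\frac{\varepsilon}{2^n}$ and therefore $\|x-x_{n+1}\|\leq\frac{\lambda}{2^n}$. Because the $S_n$ are nested and $x_m\in S_{n+1}$ for all $m\geq n+1$, this makes $\{x_n\}$ a $\mathcal{T}_{\varepsilon,\lambda}$-Cauchy sequence. Since $E$ is $\mathcal{T}_c$-complete and has the countable concatenation property it is $\mathcal{T}_{\varepsilon,\lambda}$-complete (the equivalence recorded in \cite{Guo10}), so $x_n\to x_{\lambda}$ in $\mathcal{T}_{\varepsilon,\lambda}$; by Lemma \ref{lem:2.8} each $\mathcal{T}_c$-closed $S_n$ with the countable concatenation property is also $\mathcal{T}_{\varepsilon,\lambda}$-closed, so $x_{\lambda}\in\bigcap_n S_n$, i.e. $x_{\lambda}\preceq x_n$ for every $n$. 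Then $(1)$ is $x_{\lambda}\preceq x_0$; $(2)$ follows from $\frac{\varepsilon}{\lambda}\|x_{\lambda}-x_0\|\leq f(x_0)-f(x_{\lambda})\leq f(x_0)-\bigwedge\{f(x)\ |\ x\in E\}\leq\varepsilon$; and for $(3)$, if some $x$ satisfied $x\preceq x_{\lambda}$, then $x\preceq x_n$ for all $n$ by transitivity, forcing $\|x-x_{n+1}\|\leq\frac{\lambda}{2^n}$ and hence $\|x-x_{\lambda}\|=0$, so $x=x_{\lambda}$, which is exactly the contrapositive of $(3)$.

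The step I expect to be the main obstacle is the selection of $x_{n+1}$: passing from the purely order-theoretic convergence $f(y_k)\downarrow m_n$ to a single element of $S_n$ whose value is dominated a.e. by $m_n+\frac{\varepsilon}{2^n}$ is impossible over a general partial order, and it is precisely here that the countable concatenation property of $E$ (and the locality of $f$) must be invoked. Verifying that the glued element still lies in $S_n$ and still obeys the value bound is the delicate point on which the entire iteration rests; the remaining arguments are then the module analogues of the classical estimates.
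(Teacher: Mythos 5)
Your argument is correct, and it is essentially the expected one: the paper itself gives no proof of Proposition \ref{pro:6.1} (it is imported as a special case of Theorem 3.10 of \cite{GY12}), and your iterative construction --- the order $x\preceq y$ iff $\frac{\varepsilon}{\lambda}\|x-y\|+f(x)\leq f(y)$, the nested sets $S_n$, the countable-concatenation selection of an almost-minimizer $x_{n+1}$ with $f(x_{n+1})\leq m_n+\frac{\varepsilon}{2^n}$, the diameter bound $\|x-x_{n+1}\|\leq\frac{\lambda}{2^n}$ on $S_{n+1}$, and passage to the $\mathcal{T}_{\varepsilon,\lambda}$-limit --- is exactly the random-module adaptation of the classical Ekeland iteration used in the cited source, with the concatenation property doing the work that the total order does classically. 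Two small points: antisymmetry of $\preceq$ can fail on the set where $f$ takes the value $+\infty$, but you never use it; and the assertion that $S_n$ is $\mathcal{T}_c$-closed is not immediate from the sublevel-set definition of $\mathcal{T}_c$-lower semicontinuity (the level $f(x_n)+ f(x_n)-\frac{\varepsilon}{\lambda}\|\cdot-x_n\|$ is not constant), so one should pass through the net characterization $(1)\Leftrightarrow(4)$ of Theorem \ref{the:2.13} (applicable since $f$ is proper and local and $E$, $\mathcal{P}=\{\|\cdot\|\}$ have the countable concatenation property) to conclude that $\underline{lim}_{\alpha}\left(\frac{\varepsilon}{\lambda}\|x_{\alpha}-x_n\|+f(x_{\alpha})\right)\geq\frac{\varepsilon}{\lambda}\|x-x_n\|+f(x)$ along $\mathcal{T}_c$-convergent nets.
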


\begin{remark}\label{rem:6.2}
In fact, $(3)$ of Proposition \ref{pro:6.1} also implies the following relation:\\
$(3)^{'}$ $\frac{\varepsilon}{\lambda}\|x_{\lambda}-x\|+f(x) > f(x_{\lambda})$ for each $x\in E$ such that $x \neq x_{\lambda}$, where ``$>$" means ``$\geq$" and ``$\neq$".\\
If there exists some $v\in E$ with $v \neq x_{\lambda}$ such that $(3)^{'}$ is not true. If $\frac{\varepsilon}{\lambda}\|x_{\lambda}-v\|+f(v)= f(x_{\lambda})$, this contradicts $(3)$. If $\frac{\varepsilon}{\lambda}\|x_{\lambda}-v\|+f(v) \neq f(x_{\lambda})$, then $\mu(A)>0$, where $A=\left(\frac{\varepsilon}{\lambda}\|x_{\lambda}-v\|+f(v)< f(x_{\lambda})\right)$. Let $\bar{v}=\tilde{I}_{A}v+\tilde{I}_{A^{c}}x_{\lambda}$, then $\bar{v}\neq x_{\lambda}$ and $\frac{\varepsilon}{\lambda}\|x_{\lambda}-\bar{v}\|+f(\bar{v}) \leq f(x_{\lambda})$, which contradicts $(3)$, too.
\end{remark}

\begin{theorem}\label{the:6.3}
Let $(E, \|\cdot\|)$, $\varepsilon$, $x_{0}$ and $f$ be the same as in Proposition \ref{pro:6.1}. If, in addition, $f$ is a G\^{a}teaux-differentiable function from $E$ to $L^{0}(\mathcal{F})$, then there exists $x_{\lambda}\in E$ for each $\lambda\in L_{++}^{0}(\mathcal{F})$ such that the following conditions are satisfied:\\
$(1)$ $f(x_{\lambda})\leq f(x_{0})$;\\
$(2)$ $\|x_{\lambda}-x_{0}\|\leq \lambda$;\\
$(3)$ $\|f^{'}(x_{\lambda})\|\leq \frac{\varepsilon}{\lambda}$.
\end{theorem}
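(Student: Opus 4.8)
The plan is to derive the statement directly from the random Ekeland variational principle (Proposition \ref{pro:6.1}) together with the sharpened conclusion recorded in Remark \ref{rem:6.2}, and then to convert the approximate-minimality property into an estimate on the G\^ateaux derivative. First I would apply Proposition \ref{pro:6.1} with the given $\varepsilon$, $x_{0}$ and any fixed $\lambda\in L_{++}^{0}(\mathcal{F})$ to obtain a point $x_{\lambda}\in E$ satisfying conditions $(1)$--$(3)$ of that proposition. Conditions $(1)$ and $(2)$ of the present theorem then follow immediately: indeed $\frac{\varepsilon}{\lambda}\|x_{\lambda}-x_{0}\|\geq 0$, so $(1)$ of Proposition \ref{pro:6.1} gives $f(x_{\lambda})\leq f(x_{0})$, while $(2)$ is literally $\|x_{\lambda}-x_{0}\|\leq\lambda$. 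Only condition $(3)$ requires real work.

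For $(3)$, I would first upgrade the strict statement $(3)$ of Proposition \ref{pro:6.1} to the cleaner global minimality furnished by $(3)^{'}$ of Remark \ref{rem:6.2}: for every $x\neq x_{\lambda}$ one has $\frac{\varepsilon}{\lambda}\|x_{\lambda}-x\|+f(x)>f(x_{\lambda})$, in particular $\frac{\varepsilon}{\lambda}\|x_{\lambda}-x\|+f(x)\geq f(x_{\lambda})$. Since this inequality is trivial at $x=x_{\lambda}$, the point $x_{\lambda}$ is a global minimizer (in the partial order of $\bar{L}^{0}(\mathcal{F})$) of the perturbed function $g:E\rightarrow L^{0}(\mathcal{F})$ defined by $g(x)=f(x)+\frac{\varepsilon}{\lambda}\|x-x_{\lambda}\|$, that is, $g(x)\geq g(x_{\lambda})$ for all $x\in E$. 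I would then compute the directional derivative of $g$ at $x_{\lambda}$ along an arbitrary direction $y\in E$. Because $\|(x_{\lambda}+ty)-x_{\lambda}\|=t\|y\|$ for every real $t>0$, for any sequence $t_{n}\downarrow 0$ we have $\frac{g(x_{\lambda}+t_{n}y)-g(x_{\lambda})}{t_{n}}=\frac{f(x_{\lambda}+t_{n}y)-f(x_{\lambda})}{t_{n}}+\frac{\varepsilon}{\lambda}\|y\|$, and letting $n\rightarrow\infty$ (using that $f$ is G\^ateaux-differentiable at $x_{\lambda}$, so the first term converges almost everywhere to $f^{'}(x_{\lambda})(y)$) yields $g^{'}(x_{\lambda},y)=f^{'}(x_{\lambda})(y)+\frac{\varepsilon}{\lambda}\|y\|$.

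Finally, global minimality gives $g(x_{\lambda}+t_{n}y)\geq g(x_{\lambda})$ almost everywhere for each $n$, hence each difference quotient is $\geq 0$ almost everywhere, and passing to the limit gives $g^{'}(x_{\lambda},y)\geq 0$; that is, $f^{'}(x_{\lambda})(y)\geq -\frac{\varepsilon}{\lambda}\|y\|$ for all $y\in E$. Replacing $y$ by $-y$ and using that $f^{'}(x_{\lambda})$ is a module homomorphism (so $f^{'}(x_{\lambda})(-y)=-f^{'}(x_{\lambda})(y)$) together with $\|-y\|=\|y\|$ gives the reverse bound $f^{'}(x_{\lambda})(y)\leq\frac{\varepsilon}{\lambda}\|y\|$, whence $|f^{'}(x_{\lambda})(y)|\leq\frac{\varepsilon}{\lambda}\|y\|$ for all $y$. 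Restricting to $\|y\|\leq 1$ and taking the supremum, the definition of the operator $L^{0}$-norm in Definition \ref{def:5.9} yields $\|f^{'}(x_{\lambda})\|\leq\frac{\varepsilon}{\lambda}$, which is $(3)$. I expect the main obstacle to be the careful bookkeeping in the partial-order setting: extracting the genuine pointwise inequality $g(x)\geq g(x_{\lambda})$ from Ekeland's ``$\nleq$''-type conclusion (which is exactly why Remark \ref{rem:6.2} is invoked), and verifying that dividing by the positive scalars $t_{n}$ and passing to the almost-everywhere limit preserve the inequality $g^{'}(x_{\lambda},y)\geq 0$.
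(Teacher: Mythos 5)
Your proposal is correct and follows essentially the same route as the paper: both apply Proposition \ref{pro:6.1}, upgrade its conclusion via $(3)^{'}$ of Remark \ref{rem:6.2} to the pointwise inequality $\frac{\varepsilon}{\lambda}t\|y\|+f(x_{\lambda}+ty)\geq f(x_{\lambda})$, divide by $t$, pass to the limit using G\^ateaux-differentiability to get $f^{'}(x_{\lambda})(y)\geq-\frac{\varepsilon}{\lambda}\|y\|$, and conclude $\|f^{'}(x_{\lambda})\|\leq\frac{\varepsilon}{\lambda}$ by applying this to $\pm y$. Your explicit introduction of the perturbed function $g$ and its directional derivative is only a more verbose packaging of the paper's one-line computation.
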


\begin{proof}
Let $x_{\lambda}$ be obtained as in Proposition \ref{pro:6.1}, then $x_{\lambda}$ satisfies $(1)$ and $(2)$. By $(3)^{'}$ of Remark \ref{rem:6.2}, for each $t\in [0, 1]$ and $v\in E$ one has that $\frac{\varepsilon}{\lambda}t \|v\|+f(x_{\lambda}+tv)\geq f(x_{\lambda})$, so $-\frac{\varepsilon}{\lambda} \|v\|\leq f^{'}(x_{\lambda})(v)$, which also means $\|f^{'}(x_{\lambda})\|\leq \frac{\varepsilon}{\lambda}$.
\end{proof}

\begin{theorem}\label{the:6.4}
Let $(E, \|\cdot\|)$ be a $\mathcal{T}_{c}-$complete $RN$ module over $R$ with base $(\Omega,\mathcal{F},\mu)$ such that $E$ has the countable concatenation property, $\varepsilon\in L_{++}^{0}(\mathcal{F})$, $f: E\rightarrow \bar{L}^0(\mathcal{F})$ a proper $\mathcal{T}_{c}-$lower semicontinuous $L^{0}-$convex function, $u\in E$ and $u^{*}\in E_{c}^{*}$ such that $u^{*}\in \partial_{\varepsilon}f(u)$. Then there exist $u_{\lambda}\in E$ and $u_{\lambda}^{*}\in E_{c}^{*}$ for each $\lambda \in L_{++}^{0}(\mathcal{F})$ such that the following conditions are satisfied:\\
$(1)$ $\|u-u_{\lambda}\|\leq \lambda$;\\
$(2)$ $\|u^{*}-u_{\lambda}^{*}\|\leq\frac{\varepsilon}{\lambda}$;\\
$(3)$ $u_{\lambda}^{*}\in\partial f(u_{\lambda})$.

In particular, if we take $\lambda=\sqrt{\varepsilon}$, then there exist $u_{\varepsilon}\in E$ and $u_{\varepsilon}^{*}\in E_{c}^{*}$ such that the following conditions are satisfied:\\
$(4)$ $\|u-u_{\varepsilon}\|\leq \sqrt{\varepsilon}$;\\
$(5)$ $\|u^{*}-u_{\varepsilon}^{*}\|\leq\sqrt{\varepsilon}$;\\
$(6)$ $u_{\varepsilon}^{*}\in\partial f(u_{\varepsilon})$.
\end{theorem}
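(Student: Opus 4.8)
The plan is to derive Theorem \ref{the:6.4} from the Ekeland variational principle (Proposition \ref{pro:6.1}) together with the exact sum rule for subdifferentials (Theorem \ref{the:4.3}), adapting the classical Br{\o}ndsted--Rockafellar argument while tracking the stratification carefully. First I would absorb $u^{*}$ into $f$ by setting $g:=f-u^{*}$, that is $g(x)=f(x)-u^{*}(x)$ for all $x\in E$. Since $u^{*}\in E_{c}^{*}$ is $\mathcal{T}_{c}$-continuous and $L^{0}$-linear, $g$ is again a proper $\mathcal{T}_{c}$-lower semicontinuous $L^{0}$-convex function (hence local), and the defining inequality $u^{*}(x-u)\leq f(x)-f(u)+\varepsilon$ rewrites exactly as $g(u)\leq g(x)+\varepsilon$ for all $x\in E$. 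Consequently $\bigwedge\{g(x)\mid x\in E\}\geq g(u)-\varepsilon\in L^{0}(\mathcal{F})$, so $g$ is bounded from below and $u$ is an $\varepsilon$-approximate minimizer, i.e. $g(u)\leq\bigwedge\{g(x)\mid x\in E\}+\varepsilon$.

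Next I would apply Proposition \ref{pro:6.1} to $g$ (in place of $f$) with $u$ (in place of $x_{0}$). For each $\lambda\in L_{++}^{0}(\mathcal{F})$ this produces a point $u_{\lambda}:=x_{\lambda}\in E$ with $\|u_{\lambda}-u\|\leq\lambda$, which is precisely conclusion (1). Invoking the strengthened form $(3)^{\prime}$ of Remark \ref{rem:6.2}, $u_{\lambda}$ satisfies $\frac{\varepsilon}{\lambda}\|u_{\lambda}-x\|+g(x)\geq g(u_{\lambda})$ for every $x\in E$; writing $\phi(x):=\frac{\varepsilon}{\lambda}\|x-u_{\lambda}\|$, this says exactly that $u_{\lambda}$ globally minimizes $h:=g+\phi$, so that $0\in\partial h(u_{\lambda})$ directly from the definition of the subdifferential.

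Now I would decompose this zero subgradient. The function $\phi$ is $L^{0}$-convex and $\mathcal{T}_{c}$-continuous everywhere, hence continuous at any point of $dom(g)=dom(f)$, so Theorem \ref{the:4.3} applies to $h=\phi+g$ and gives $\partial h(u_{\lambda})=\partial\phi(u_{\lambda})+\partial g(u_{\lambda})=\partial\phi(u_{\lambda})+\big(\partial f(u_{\lambda})-u^{*}\big)$, the last step using the elementary identity $\partial(f-u^{*})(\cdot)=\partial f(\cdot)-u^{*}$ valid for any continuous $L^{0}$-linear $u^{*}$. From $0\in\partial h(u_{\lambda})$ I would extract $q\in\partial\phi(u_{\lambda})$ and $p\in\partial f(u_{\lambda})$ with $0=q+(p-u^{*})$, and set $u_{\lambda}^{*}:=p=u^{*}-q$; this is conclusion (3), $u_{\lambda}^{*}\in\partial f(u_{\lambda})$. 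For conclusion (2) I would note that any $q\in\partial\phi(u_{\lambda})$ obeys $q(y)\leq\phi(u_{\lambda}+y)-\phi(u_{\lambda})=\frac{\varepsilon}{\lambda}\|y\|$ for all $y\in E$; replacing $y$ by $-y$ gives $|q(y)|\leq\frac{\varepsilon}{\lambda}\|y\|$, whence $\|u^{*}-u_{\lambda}^{*}\|=\|q\|=\bigvee\{|q(y)|\mid\|y\|\leq1\}\leq\frac{\varepsilon}{\lambda}$. The final assertion then follows by specializing to $\lambda=\sqrt{\varepsilon}\in L_{++}^{0}(\mathcal{F})$, for which $\frac{\varepsilon}{\lambda}=\sqrt{\varepsilon}$, turning (1) and (2) into (4) and (5).

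The main obstacle I anticipate is not the overall architecture but the $L^{0}$-module bookkeeping at two places. First, passing from the extremality condition (3) of Proposition \ref{pro:6.1} to a genuine global minimum of $h$ requires the strengthening $(3)^{\prime}$ of Remark \ref{rem:6.2}, because the bare relation $\nleq$ only controls $h$ on sets of positive measure rather than pointwise on all of $\Omega$. Second, the exact additivity $\partial h(u_{\lambda})=\partial\phi(u_{\lambda})+\partial g(u_{\lambda})$ must be justified through Theorem \ref{the:4.3}, whose continuity hypothesis is met here precisely because $\phi$ is $\mathcal{T}_{c}$-continuous at every point of $dom(f)$; without exactness one would only get the inclusion $\supseteq$, which is insufficient to split the zero subgradient. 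Once these two $L^{0}$-specific points are handled, the remaining estimates are routine.
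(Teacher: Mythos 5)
Your proposal is correct, but it takes a genuinely different route from the paper's proof. Both arguments begin identically: you absorb $u^{*}$ into the objective (your $g=f-u^{*}$ is, up to the additive constant $f^{*}(u^{*})$, the paper's function $G$), observe that $u$ is an $\varepsilon$-approximate minimizer, and invoke Proposition \ref{pro:6.1} to produce $u_{\lambda}$. The divergence is in how the subgradient $u_{\lambda}^{*}$ is extracted. The paper works directly with the epigraph of $G$ and the translated cone $V(\varepsilon/\lambda)=\{(v,a)\mid a+\frac{\varepsilon}{\lambda}\|v\|\leq0\}$: it proves the single-point intersection (iii), upgrades it to the stratified disjointness (iv) (the step the authors highlight in Remark \ref{rem:6.7} as the delicate one), and then separates the two sets by hand via Proposition \ref{pro:4.2}, reading off both $\|h^{*}\|\leq\varepsilon/\lambda$ and the subgradient inequality from the resulting affine minorant. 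You instead pass through Remark \ref{rem:6.2}'s strengthening $(3)^{\prime}$ to see that $u_{\lambda}$ is a genuine global minimizer of $g+\phi$ with $\phi=\frac{\varepsilon}{\lambda}\|\cdot-u_{\lambda}\|$, so $0\in\partial(g+\phi)(u_{\lambda})$, and then delegate the separation to the already-proved sum rule (Theorem \ref{the:4.3}), whose continuity hypothesis $\phi$ satisfies everywhere; the norm bound $\|q\|\leq\varepsilon/\lambda$ for $q\in\partial\phi(u_{\lambda})$ is immediate. Your route is the classical Br{\o}ndsted--Rockafellar argument and is cleaner in that it reuses Section \ref{sec:4} rather than redoing a stratified separation from scratch; the paper's route is self-contained at this point and makes the geometric content (epigraph versus cone) explicit. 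Two small points you should make explicit in a polished write-up: first, that $g=f-u^{*}$ is again $\mathcal{T}_{c}$-lower semicontinuous in the sense of Definition \ref{def:2.12} -- this is clear from the net characterization (4) of Theorem \ref{the:2.13}, which is available here because $E$ and $\mathcal{P}=\{\|\cdot\|\}$ have the countable concatenation property, but is not completely immediate from the sublevel-set definition alone; second, that $u_{\lambda}\in dom(g)$, which follows from conclusion (1) of Proposition \ref{pro:6.1}, so that $g+\phi$ is finite at $u_{\lambda}$ and the subdifferential there is meaningful.
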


\begin{proof}
Let $G: E\rightarrow \bar{L}^0(\mathcal{F})$ be defined by $G(v)=f(v)-u^{*}(v)+f^{*}(u^{*})$ for all $v\in E$, where $f^{*}: E_{c}^{*}\rightarrow \bar{L}^0(\mathcal{F})$ is the $\mathcal{T}_{c}-$random conjugate function of $f$. Then $G(u)\leq \bigwedge \{G(v)|v\in E\}+\varepsilon$ by the definition of $\partial_{\varepsilon}f(u)$.

We can thus apply Proposition \ref{pro:6.1} to $G$, there exists $u_{\lambda}\in E$ such that the following conditions are satisfied:\\
$(i)$ $\|u-u_{\lambda}\|\leq \lambda$, $G(u_{\lambda})\leq G(u)$;\\
$(ii)$ $\frac{\varepsilon}{\lambda}\|v-u_{\lambda}\|+G(v)\nleq G(u_{\lambda})$ for all $v\in E$ such that $v\neq u_{\lambda}$.

Denote $V (\varepsilon/\lambda)=\left\{(v, a)\in E\times L^0(\mathcal{F})\left|\right. a+\frac{\varepsilon}{\lambda}\|v\|\leq 0\right\}$, then $(ii)$ yields the following relation:\\
$(iii)$ $epi(G)\bigcap \left((u_{\lambda}, G(u_{\lambda}))+V (\varepsilon/\lambda)\right)=\left\{(u_{\lambda}, G(u_{\lambda}))\right\}$.

Indeed, let $(v, r)\in epi(G)$ and $(v-u_{\lambda}, r-G(u_{\lambda}))\in V (\varepsilon/\lambda)$, namely $G(v)\leq r$ and $r-G(u_{\lambda})+\frac{\varepsilon}{\lambda}\|v-u_{\lambda}\|\leq 0$. Then $G(v)+\frac{\varepsilon}{\lambda}\|v-u_{\lambda}\|\leq G(u_{\lambda})$ and $r\leq G(u_{\lambda})$. This shows that $v=u_{\lambda}$ by $(ii)$ and $r=G(u_{\lambda})$.

Further, we also have the following relation:\\
$(iv)$ $\tilde{I}_{A}epi(G)\bigcap \tilde{I}_{A} \left((u_{\lambda}, G(u_{\lambda}))+int(V(\varepsilon/\lambda))\right)=\emptyset$ for all $A\in \mathcal{F}$ with $\mu(A)>0$, where $int(V(\varepsilon/\lambda))$ stands for the $\mathcal{T}_{c}-$interior of $V(\varepsilon/\lambda)$.

In fact, if there exist $A\in \mathcal{F}$ with $\mu(A)>0$, $(v_{1}, r_{1})\in epi(G)$ and $(v_{2}, r_{2})\in (u_{\lambda}, G(u_{\lambda}))+int(V(\varepsilon/\lambda))$ such that $\tilde{I}_{A}(v_{1}, r_{1})=\tilde{I}_{A}(v_{2}, r_{2})$, then, by defining $(v_{3}, r_{3})=\tilde{I}_{A}(v_{1}, r_{1})+\tilde{I}_{A^{c}}(u_{\lambda}, G(u_{\lambda})) (=\tilde{I}_{A}(v_{2}, r_{2})+\tilde{I}_{A^{c}}(u_{\lambda}, G(u_{\lambda})))$, one can see that $(v_{3}, r_{3})\in epi(G)\bigcap\left((u_{\lambda}, G(u_{\lambda}))+ V(\varepsilon/\lambda)\right)$ by the $L^{0}-$convexity of the two sets, so that $(v_{3}, r_{3})=(u_{\lambda}, G(u_{\lambda}))$ by $(iii)$, which further implies that $\tilde{I}_{A}v_{1}=\tilde{I}_{A}v_{2}$ and $\tilde{I}_{A}r_{1}=\tilde{I}_{A}r_{2}=\tilde{I}_{A}G(u_{\lambda})$, whereas $int(V (\varepsilon/\lambda))=\{(v, a)\in E\times L^0(\mathcal{F})\left|\right.a+\frac{\varepsilon}{\lambda}\|v\|<0~\mbox{on}~\Omega\}$ shows that $r_{2}<G(u_{\lambda})$ on $\Omega$ so that it is impossible that $\tilde{I}_{A}r_{2}=\tilde{I}_{A}G(u_{\lambda})$.

We can thus apply Proposition \ref{pro:4.2} to $epi(G)$ and $(u_{\lambda}, G(u_{\lambda}))+int(V(\varepsilon/\lambda))$, there exist $g\in E_{c}^{*}$ and $\beta\in L^0(\mathcal{F})$ such that $g(v_{2})+\beta a_{2}>g(v_{1})+\beta a_{1}$ on $\Omega$ for all $(v_{2}, a_{2})\in epi(G)$ and $(v_{1}, a_{1})\in (u_{\lambda}, G(u_{\lambda}))+int(V(\varepsilon/\lambda))$. Since $a_{2}$ may be arbitrarily large for $(v_{2}, a_{2})$ satisfying $(v_{2}, a_{2})\in epi(G)$, then one must deduce that $\beta>0$ on $\Omega$. Let $h^{*}=g/\beta$, then the following relation is satisfied:\\
$(v)$ $h^{*}(v_{2})+a_{2}\geq h^{*}(v_{1})+a_{1}$ for all $(v_{2}, a_{2})\in epi(G)$ and $(v_{1}, a_{1})\in (u_{\lambda}, G(u_{\lambda}))+V(\varepsilon/\lambda)$ since $int(V(\varepsilon/\lambda))$ is $\mathcal{T}_{c}-$dense in $V(\varepsilon/\lambda)$.

First, by taking $(v_{2}, a_{2})=(u_{\lambda}, G(u_{\lambda}))$ and $(v_{1}, a_{1})=(u_{\lambda}, G(u_{\lambda}))+(w, s)$ in $(v)$ for any given $(w, s)\in V(\varepsilon/\lambda)$, one can deduce that $h^{*}(w)+s\leq0$ for all $(w, s)\in V(\varepsilon/\lambda)$. Further, it is obvious that $(w, s)$ always belongs to $V(\varepsilon/\lambda)$ whenever $s=-\varepsilon/\lambda$ and $w\in E$ is such that $\|w\|\leq1$, and hence $\|h^{*}\|=\bigvee\{h^{*}(w)|w\in E~\mbox{and}~\|w\|\leq1\}\leq\frac{\varepsilon}{\lambda}$. Now, let $u_{\lambda}^{*}=u^{*}-h^{*}$, then $\|u_{\lambda}^{*}-u^{*}\|=\|h^{*}\|\leq \varepsilon/\lambda$.

Then, by taking $(v_{2}, a_{2})=(v, G(v))$ for any give $v\in dom(G)$ and $(v_{1}, a_{1})=(u_{\lambda}, G(u_{\lambda}))$ in $(v)$, one can deduce that $h^{*}(v)+G(v)\geq h^{*}(u_{\lambda})+G(u_{\lambda})$, namely $h^{*}(v-u_{\lambda})+G(v)-G(u_{\lambda})\geq0$ for all $v\in dom(G)$. Again, by the definition of $G$, we have that $u_{\lambda}^{*}(v-u_{\lambda})\leq f(v)-f(u_{\lambda})$ for all $v\in dom(f)$, which clearly implies that $u_{\lambda}^{*}\in \partial f(u_{\lambda})$.

This completes the proof.
\end{proof}

\begin{corollary}\label{cor:6.5}
Let $(E, \|\cdot\|)$ be a $\mathcal{T}_{c}-$complete $RN$ module over $R$ with base $(\Omega,\mathcal{F},\mu)$ such that $E$ has the countable concatenation property and $f: E\rightarrow \bar{L}^0(\mathcal{F})$ a proper $\mathcal{T}_{c}-$lower semicontinuous $L^{0}-$convex function. Then the set $\{u\in E|\partial F(u)\neq\emptyset\}$ is $\mathcal{T}_{c}-$dense in $dom(f)$.
\end{corollary}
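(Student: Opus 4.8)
The plan is to obtain density directly from Theorem \ref{the:6.4} (the approximation of $\varepsilon$-subgradients by genuine subgradients) together with the nonemptiness of $\varepsilon$-subdifferentials recorded in Remark \ref{rem:2.19}. First I would fix an arbitrary point $u\in dom(f)$ and an arbitrary basic $\mathcal{T}_c$-neighborhood of $u$. Since $(E,\|\cdot\|)$ is an $RN$ module, the family $\mathcal{P}=\{\|\cdot\|\}$ is a singleton, so by Proposition \ref{pro:2.2} such a neighborhood may be taken to be $u+N_\theta(\{\|\cdot\|\},\eta)=\{x\in E\ |\ \|x-u\|\leq\eta\}$ for some $\eta\in L^0_{++}(\mathcal{F})$. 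It then suffices to produce a single point $u'$ inside this neighborhood with $\partial f(u')\neq\emptyset$, and since any such $u'$ automatically lies in $dom(f)$, this will exhibit the required meeting of the neighborhood with $\{u\in E\ |\ \partial f(u)\neq\emptyset\}$.

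Next I would set $\varepsilon:=\eta^2\in L^0_{++}(\mathcal{F})$. Because $E$ has the countable concatenation property and $\mathcal{P}=\{\|\cdot\|\}$ trivially has it as well, and $f$ is proper, $\mathcal{T}_c$-lower semicontinuous and $L^0$-convex, Remark \ref{rem:2.19} guarantees that $\partial_\varepsilon f(u)\neq\emptyset$; I would fix some $u^*\in\partial_\varepsilon f(u)$. With $u$, $u^*$ and $\varepsilon$ in hand, the hypotheses of Theorem \ref{the:6.4} are met, so applying its ``in particular'' part (items $(4)$--$(6)$) with the choice $\lambda=\sqrt{\varepsilon}=\eta$ yields $u_\varepsilon\in E$ and $u_\varepsilon^*\in E^*_c$ with $\|u-u_\varepsilon\|\leq\sqrt{\varepsilon}=\eta$ and $u_\varepsilon^*\in\partial f(u_\varepsilon)$.

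Finally I would assemble the conclusion: the point $u_\varepsilon$ satisfies $\|u-u_\varepsilon\|\leq\eta$, so it lies in the prescribed neighborhood $u+N_\theta(\{\|\cdot\|\},\eta)$, while $\partial f(u_\varepsilon)\neq\emptyset$ since it contains $u_\varepsilon^*$. As $u\in dom(f)$ and $\eta\in L^0_{++}(\mathcal{F})$ were arbitrary, every $\mathcal{T}_c$-neighborhood of every point of $dom(f)$ meets $\{u\in E\ |\ \partial f(u)\neq\emptyset\}$, which is precisely $\mathcal{T}_c$-density in $dom(f)$.

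There is no hard analytic obstacle here, since the heavy lifting---the Ekeland-type perturbation turning an $\varepsilon$-subgradient into a genuine subgradient at a nearby point---is already carried out in Theorem \ref{the:6.4}. The only point demanding a little care is the bookkeeping with the $L^0_{++}(\mathcal{F})$-valued ``radius'': one must choose $\varepsilon$ as a function of the neighborhood radius $\eta$ (here $\varepsilon=\eta^2$, so that $\sqrt{\varepsilon}=\eta$) rather than as a scalar, and verify that $\varepsilon\in L^0_{++}(\mathcal{F})$ whenever $\eta\in L^0_{++}(\mathcal{F})$, which is immediate.
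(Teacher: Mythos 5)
Your proposal is correct and follows essentially the same route as the paper: invoke Remark \ref{rem:2.19} to get a nonempty $\varepsilon$--subdifferential at an arbitrary $u\in dom(f)$ and then apply Theorem \ref{the:6.4} with $\lambda=\sqrt{\varepsilon}$ to land a genuinely subdifferentiable point within the prescribed neighborhood. Your only addition is the explicit bookkeeping $\varepsilon=\eta^{2}$ matching the neighborhood radius, which the paper leaves implicit.
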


\begin{proof}
Since both $E$ and $\mathcal{P}=\{\|\cdot\|\}$ have the countable concatenation property, for any $u\in dom(f)$ and any $\varepsilon\in L_{++}^{0}(\mathcal{F})$, there exists $u^{*}\in \partial _{\varepsilon}f(u)$ by Remark \ref{rem:2.19}. Then, Theorem \ref{the:6.4} produce $u_{\varepsilon}\in E$ and $u_{\varepsilon}^{*}\in E_{c}^{*}$ such that $\|u_{\varepsilon}-u\|\leq \sqrt{\varepsilon}$, $\|u_{\varepsilon}^{*}-u^{*}\|\leq \sqrt{\varepsilon}$ and $u_{\varepsilon}^{*}\in \partial f(u_{\varepsilon})$.
\end{proof}

\begin{remark}\label{rem:6.6}
Although Proposition \ref{pro:4.1} also can deduce Corollary \ref{cor:6.5} since the set $\{u\in E|\partial f(u)\neq\emptyset\}\supset int(dom(f))$ and it is obvious that $int(dom(f))$ is $\mathcal{T}_{c}-$dense in $dom(f)$, we should like to emphasize the power of the Ekeland's variational principle-Proposition \ref{pro:6.1}, since it is Proposition \ref{pro:6.1} that we can obtain a stronger conclusion, namely, $u$ and $u^{*}$ (with $u\in dom (f)$ and $u^{*}\in \partial _{\varepsilon}f(u)$) can be simultaneously approximated by $u_{\varepsilon}$ and $u_{\varepsilon}^{*}$ with $u_{\varepsilon}^{*}\in \partial f(u_{\varepsilon})$, respectively.
\end{remark}

\begin{remark}\label{rem:6.7}
Finally, we should also mention the work of Yang Y. J. in \cite{Yang12}, where she also presented and proved Theorem \ref{the:6.4}, but her proof of $(iv)$ (see the process of the proof of Theorem \ref{the:6.4}) employed the rather complicated technique from the relative topology and the extremely complicated stratification analysis. Compared with hers, our proof of $(iv)$ is straightforward and simple.
\end{remark}

\bibliographystyle{amsplain}

\end{document}